\theoremstyle{plain}
\theoremstyle{definition}
\newtheorem{definition}{Definition}[section]
\newtheorem{lemma}{Lemma}[section]
\newtheorem{theorem}{Theorem}[section]
\newtheorem{proposition}{Proposition}[section]
\newtheorem{remark}{Remark}[section]
\numberwithin{equation}{section}
\newcommand{\bs}{\boldsymbol}
\newcommand{\mb}{\mathbb}
\newcommand{\mc}{\mathcal}
\newcommand{\ms}{\mathscr}
\newcommand{\mr}{\mathrm}
\def \pa{\partial}
\begin{document}
\title[]{Exact Global Control of Small Divisors in Rational Normal Form}

\let\thefootnote\relax\footnotetext{Supported by NNSFC (Grant Nos. 11971299, 12090010, 12090013)}

\author{Jianjun Liu \quad Duohui Xiang }
\address[\;Jianjun Liu\;] {School of Mathematics\\ Sichuan University\\ Chengdu 610065, China}
\email{jianjun.liu@scu.edu.cn}

\address[Duohui Xiang ] {School of Mathematics\\ Sichuan University\\ Chengdu 610065, China}
\email{duohui.xiang@outlook.com}

\thanks{}

\begin{abstract}
Rational normal form is a powerful tool to deal with Hamiltonian partial differential equations without external parameters. In this paper, we build rational normal form with exact global control of small divisors. As an application to nonlinear Schr\"{o}dinger equations in Gevrey spaces, we prove sub-exponentially long time stability results for generic small initial data.
\\\textbf{Keywords:} rational normal form, long time stability, nonlinear Schr\"{o}dinger equation
\end{abstract}

\maketitle

\tableofcontents
\section{Introduction}
Birkhoff normal form for long time stability of solutions of Hamiltonian partial differential equations has been widely investigated by many authors.
For instance, in \cite{BG06}, Bambusi and Gr\'{e}bert proved an abstract Birkhoff normal form theorem adapted to a wide class of Hamiltonian partial differential equations, such as nonlinear wave equations and nonlinear Schr\"{o}dinger equations with Dirichlet or periodic boundary conditions. To guarantee the non-resonant conditions, these equations are of external parameters, such as the mass in wave equations and the potential in Schr\"{o}dinger equations. Consequently, for these equations in Sobolev spaces $H^s$ with large $s$, they proved polynomially long time stability estimates, i.e., the stability time is of order $\varepsilon^{-r}$ for any given positive integer $r$.
There are many other polynomially long time stability results for equations in Sobolev spaces with external parameters, seeing \cite{B96,B03,BDGS07,B08,GIP09,Z10,D12,YZ14,D15,CLY16,BD18,BFG20,Z20} for example.
Moreover, for equations in analytic or Gevrey spaces with external parameters, there are sub-exponentially long time stability results, i.e., the stability time is of order $\varepsilon^{-|\ln\varepsilon|^{\beta}}$ for some $0<\beta<1$. See \cite{FG13,CLSY18,BMP20,CMW20,CCMW22,CMWZ22} for example.

On the other hand, for an equation without external parameters, its linear frequencies usually don't meet the high-order non-resonant conditions. Then a possible way is to extract parameters from nonlinear integrable terms by amplitude-frequency modulation, in which the amplitudes of initial data are used as parameters. These are the so-called inner parameters.
Compared with external parameters, the size of inner parameters is much smaller, which leads to more sensitive non-resonant conditions.
In \cite{KP96}, for one dimensional nonlinear Schr\"{o}dinger equation, Kuksin and P\"{o}schel introduced four-order normal form terms as part of the unperturbed Hamiltonian to modulate the resonant linear frequencies such that the non-resonant conditions are ensured in KAM iteration.
In \cite{B99b}, Bambusi used this nonlinear modulation to prove exponentially long time stability for a particular set of initial data in $H^1$.
In \cite{B00}, Bourgain also used this nonlinear modulation to prove arbitrarily polynomially long time stability for most initial data in $H^s$ with large $s$. Since the modulated frequencies from four-order normal form terms are linear about parameters, the smallest index of parameters in non-resonant conditions may be not small. This leads to an intricate iterative procedure in \cite{B00}.
Recently in \cite{BFG20b,BG21}, Bernier, Faou and Gr\'{e}bert built rational normal form to research more Hamiltonian partial differential equations without external parameters, including one dimensional nonlinear Schr\"{o}dinger and Schr\"{o}dinger-Poisson equations, generalized KdV and Benjamin-Ono equations. Among them, the first two equations are of bounded nonlinear vector field, and the last two equations are of unbounded nonlinear vector field. An advantage of rational normal form is that it provides a more uniform stability result and enables an accurate description of the typical dynamics. Moreover, compared with \cite{KP96,B99b,B00} where only four-order normal form terms are used to modulate frequencies, in \cite{BFG20b,BG21} six-order normal form terms are also used, such that frequencies are more twisted about parameters.

To our knowledge, in the area of Birkhoff normal form for Hamiltonian partial differential equations with inner parameters, \cite{B00,BFG20b,BG21} mentioned above are the only results with general initial data and arbitrary number of iterative steps. In these three papers, the phase spaces are Sobolev spaces, and the stability time is of arbitrarily polynomial order. Now a natural question is to consider smoother phase spaces and longer stability time.
In smoother spaces, such as Gevrey or analytic spaces, the size of inner parameters is much smaller than it in Sobolev spaces. Thus the non-resonant conditions are more sensitive to Fourier modes. This leads to an essential difficulty of using the rational normal form in \cite{BFG20b,BG21}. In the following, we will illustrate the difficulty in detail.

The rational monomial is roughly of the form
\begin{equation}
\label{form6-30-1}
\frac{u_{a_1}\bar{u}_{a_2}\cdots u_{a_{2p-1}}\bar{u}_{a_{2p}}}{\prod_{i=1}^{q}\Omega_{\bs{b}_i}(I)}
\end{equation}
with $0\leq q<p$,
where $\bs{b}_{i}$ is an integer vector and $\Omega_{\bs{b}_i}(I)$ is a small divisor for every $i=1,\cdots,q$.
%
Notice that the small divisor $\Omega_{\bs{b}_i}(I)$ can be controlled by $\mu_{\min}(\bs{b}_i)$, which denotes the smallest index of $\bs{b}_i$. Specifically in the Sobolev space 
\begin{equation}
\label{form7-7-1'}
H^{s}:=\{u=\{u_a\}_{a\in\mb{Z}}\mid \|u\|_{s}^{2}:=|u_0|^{2}+\sum_{a\in\mb{Z}\backslash\{0\}}|a|^{2s}|u_a|^{2}<+\infty\},
\end{equation}
it very roughly holds that
\begin{equation}
\label{form7-5-1}
|\Omega_{\bs{b}_i}(I)|>\mu_{\min}(\bs{b}_i)^{-2s}\quad\text{and}\quad|u_a|<|a|^{-s}.
\end{equation}
%
In order to control the denominator of \eqref{form6-30-1}, the condition `\emph{distribution of the derivatives}' was built in (64) of \cite{BFG20b}: there exist an injective map $\iota:\{1,\cdots,2q\}\mapsto\{3,\cdots,2p\}$ and a positive constant $c$ such that
\begin{equation}
\label{form110}
\mu_{\min}(\bs{b}_{i})\leq c\mu_{\iota_{2i-1}}(\bs{a}),\quad
\mu_{\min}(\bs{b}_{i})\leq c\mu_{\iota_{2i}}(\bs{a}),
\end{equation}
where $\mu_k(\bs{a})$ denotes the $k$'s largest norm of $\bs{a}:=(a_1,\cdots,a_{2p})$.
Consequencely, one has
\begin{equation}
\label{form7-5-2}
\prod_{i=1}^{q}\big(\mu_{\min}(\bs{b}_{i})\big)^{2}\leq c^{2q}\prod_{k=3}^{2p}\mu_{k}(\bs{a}).
\end{equation}
Hence, the rational monomial \eqref{form6-30-1} can be well estimated by \eqref{form7-5-1} and \eqref{form7-5-2}.
Moreover, \eqref{form110} implies the fact:
\begin{equation}
\label{form111}
\mu_{\min}(\bs{b}_{i})\leq c\mu_{2}(\bs{a}), \;\text{for} \; i=1,\cdots,q,
\end{equation}
which is the condition `\emph{global control of the structure}' in (65) of \cite{BFG20b}. This condition is used to estimate the derivative of the denominator in \eqref{form6-30-1}.
To make sure that the condition \eqref{form110} can be kept in the Possion bracket of any two rational monomials, the constant $c$ must depend on $p,q$ with $c>1$. In fact, even for \eqref{form111}, it is  impossible to fix $c=1$.
Now, for instance, consider the rational monomial \eqref{form6-30-1} in the Gevrey space 
\begin{equation}
\label{form7-7-1''}
\mc{G}_{\rho,\theta}:=\{u=\{u_a\}_{a\in\mb{Z}}\mid \|u\|_{\rho,\theta}^{2}:=\sum_{a\in\mb{Z}}e^{2\rho|a|^{\theta}}|u_a|^{2}<+\infty\},
\end{equation}
where $\rho>0$ and $0<\theta<1$. Instead of \eqref{form7-5-1},  one very roughly has
\begin{equation}
\label{form7-5-3}
|\Omega_{\bs{b}_i}(I)|>e^{-2\rho|\mu_{\min}(\bs{b}_i)|^{\theta}}\quad\text{and}\quad|u_a|<e^{-\rho|a|^{\theta}}.
\end{equation}
%
%
By \eqref{form110} and \eqref{form7-5-3}, one has
\begin{equation}
\label{form7-8-1}
\frac{1}{|\prod_{i=1}^{q}\Omega_{\bs{b}_i}(I)|}<e^{2\rho\sum_{i=1}^{q}|\mu_{\min}(\bs{b}_i)|^{\theta}}\leq e^{\rho\sum_{k=3}^{2p}|c\mu_{k}(\bs{a})|^{\theta}}=\prod_{k=3}^{2p}e^{\rho c^{\theta}|\mu_{k}(\bs{a})|^{\theta}},
\end{equation}
which can not be controlled by the numerator of \eqref{form6-30-1} due to $c^{\theta}>1$.
%
Therefore, for rational normal form, the conditions \eqref{form110} and \eqref{form111} built in \cite{BFG20b} can not be adopted to Gevrey spaces.

To solve this problem, we develop the tool of rational normal form in general Hilbert spaces
$$\mr{h}_{\mr{w}}:=\{u=\{u_a\}_{a\in\mb{Z}}\mid |u|_{\mr{w}}^{2}:=\sum_{a\in\mb{Z}}\mr{w}_a^{2}|u_a|^{2}<+\infty\},$$
where the weight $\mr{w}=\{\mr{w}_a\}_{a\in\mb{Z}}$ is  a real sequence satisfying $1\leq\mr{w}_a\leq\mr{w}_{a'}$ for $|a|\leq|a'|$. Instead of the conditions \eqref{form110} and \eqref{form111}, we build the following two conditions to globally control small divisors:
\begin{equation}
\label{form42'}
\prod_{i=1}^{q}\mr{w}_{\mu_{\min}(\bs{b}_{i})}^{2}\leq\frac{\prod_{k=1}^{2p}\mr{w}_{a_{k}}}{\mr{w}_{\mu_1(\bs{a})}^{2}},
\end{equation}
\begin{equation}
\label{form43'}
\big(\prod_{i=1}^{q}\mr{w}_{\mu_{\min}(\bs{b}_{i})}^{2}\big)\max_{i=1,\cdots,q}\{\mr{w}_{\mu_{\min}(\bs{b}_{i})}^{2}\}\leq\prod_{k=1}^{2p}\mr{w}_{a_{k}}.
\end{equation}
Importantly, we can prove that these two conditions are kept together in the iterative process, namely they are preserved in the Poisson bracket, seeing \Cref{le42}. We emphasize that neither \eqref{form42'} nor \eqref{form43'} could be separately kept under Poisson bracket.
Just as important, the conditions \eqref{form42'} and \eqref{form43'} are exactly adequate to bound the rational monomial \eqref{form6-30-1} and its derivative, seeing \eqref{form421} and \eqref{form418} in the proof of \Cref{le41}. 

Specially, in the Sobolev space $H^{s}$, the conditions \eqref{form42'} and \eqref{form43'} can be written as
\begin{equation}
\label{form112'}
\prod_{i=1}^{q}\big(\mu_{\min}(\bs{b}_{i})\big)^{2}\leq\frac{\prod_{k=1}^{2p}\mu_{k}(\bs{a})}{\big(\mu_1(\bs{a})\big)^{2}},
\end{equation}
\begin{equation}
\label{form113'}
\prod_{i=1}^{q}\big(\mu_{\min}(\bs{b}_{i})\big)^{2}\max_{i=1,\cdots,q}\{\big(\mu_{\min}(\bs{b}_{i})\big)^{2}\}\leq\prod_{k=1}^{2p}\mu_{k}(\bs{a}).
\end{equation}
Notice that in the conditions \eqref{form110} and \eqref{form111}, up to a multiple $c$, every $\mu_{\min}(\bs{b}_{i})$ is controlled. 
Instead, in the conditions \eqref{form112'} and \eqref{form113'}, we exactly control the combines of $\mu_{\min}(\bs{b}_{i})$, $i=1,\cdots,2q$.
Actually, \eqref{form112'} imples
$$\prod_{i=1}^{q}\big(\mu_{\min}(\bs{b}_{i})\big)^{2}\leq\prod_{k=3}^{2p}\mu_{k}(\bs{a}),$$
which is \eqref{form7-5-2} with the multiple $c^{2q}$ being deleted. This means better estimates of coefficients in rational normal form. Therefore, the conditions \eqref{form42'} and \eqref{form43'} may be useful even in Sobolev spaces, but we will not discuss further in the present paper.

In the Gevrey space $\mc{G}_{\rho,\theta}$, the conditions \eqref{form42'} and \eqref{form43'} can be written as
\begin{equation}
\label{form112}
2\sum_{i=1}^{q}\big(\mu_{\min}(\bs{b}_{i})\big)^{\theta}\leq\sum_{k=1}^{2p}\big(\mu_{k}(\bs{a})\big)^{\theta}-2\big(\mu_1(\bs{a})\big)^{\theta},
\end{equation}
\begin{equation}
\label{form113}
2\sum_{i=1}^{q}\big(\mu_{\min}(\bs{b}_{i})\big)^{\theta}+2\max_{i=1,\cdots,q}\{\big(\mu_{\min}(\bs{b}_{i})\big)^{\theta}\}\leq\sum_{k=1}^{2p}\big(\mu_{k}(\bs{a})\big)^{\theta}.
\end{equation}
With the help of \eqref{form112}, the inequality \eqref{form7-8-1} becomes into
$$\frac{1}{|\prod_{i=1}^{q}\Omega_{\bs{b}_i}(I)|}<e^{2\rho\sum_{i=1}^{q}|\mu_{\min}(\bs{b}_i)|^{\theta}}\leq e^{\rho\sum_{k=1}^{2p}|\mu_{k}(\bs{a})|^{\theta}-2\rho|\mu_1(\bs{a})|^{\theta}}\leq\prod_{k=3}^{2p}e^{\rho |\mu_{k}(\bs{a})|^{\theta}},$$
which is controlled by the numerator of \eqref{form6-30-1}. We also mention that \eqref{form113} is used to control the derivative of $\frac{1}{\prod_{i=1}^{q}\Omega_{\bs{b}_i}(I)}$.
%
\\\indent
%
As an application, we study the long time stability in Gevrey spaces $\mc{G}_{\rho,\theta}$ for the nonlinear Schr\"{o}dinger equation
\begin{equation}
\label{form11}
{\rm i}u_{t}=-u_{xx}+\varphi(|u|^{2})u,\quad x\in\mb{T}:=\mb{R}/2\pi\mb{Z},
\end{equation}
where $\varphi$ is a real analytic function on a neighborhood of the origin satisfying $\varphi(0)=0$ and $\varphi'(0)\neq0$.
We prove sub-exponentially long time stability results for generic small initial data, seeing \Cref{th11}. The stability time is of order $\varepsilon^{-|\ln\varepsilon|^{\beta}}$ for any $0<\beta<1$.
We emphasize that the exponent $\beta$ in the stability time could be arbitrarily close to $1$, seeing \Cref{re12} for a comparison with the previous results.
In order to get longer stability time than polynomial order, the number of iterations $r$ must depend on $\varepsilon$. Thus we need more concrete coefficient estimates, especially the dependence on $r$. This is notably different from \cite{BFG20b,BG21} in which  $r$ is an arbitrarily given constant.
%
%
Moreover, the measure is also different. In \cite{BFG20b,BG21}, roughly speaking, the actions $\{I_{a}\}_{a\in\mb{Z}}$ are assumed to be independent and uniformly distributed in an infinite cube 
$\prod_{a\in\mb{Z}\backslash\{0\}}(0,|a|^{-2s-\nu})$ with $\nu>1$. In the present paper, instead of the infinite cube, we directly use the unit ball $B_{\rho,\theta}(1)$ in $\mc{G}_{\rho,\theta}$, that is $\sum_{a\in\mb{Z}}e^{2\rho|a|^{\theta}}I_a<1$.
Correspondingly, we introduce the Gaussian measure instead of the product measure. See \Cref{re13} for a more detailed discussion of the measure.

In addition, we think that the tool of rational normal form with \eqref{form42'} and \eqref{form43'} could be used in a wider range, in the sense of more equations and more general phase spaces.
\\\indent In the present paper, for convenience, we keep fidelity with the notation and terminology from \cite{BFG20b,BG21}. Now we lay out an outline:
\\\indent \Cref{sec2} contains notations and main results. In subsection \ref{subsec21}, we introduce the conditions \eqref{form42} and \eqref{form43} to exactly globally control the small divisors of rational Hamiltonian functions  in general Hilbert spaces. Then we give the estimates of the Poisson bracket and the vector field of rational Hamiltonian functions, seeing \Cref{le42} and  \Cref{le41}.  In subsection \ref{subsec22}, for the nonlinear Schr\"{o}dinger equation without external parameters in Gevrey spaces $\mc{G}_{\rho,\theta}$, we give the sub-exponentially long time stability results, seeing \Cref{th11}.
\\\indent In \Cref{sec3}, we prove \Cref{le42} and  \Cref{le41} in detail. 
\\\indent In \Cref{sec4}--\ref{sec7},  \Cref{th11} is proved.
In \Cref{sec4}, we firstly prove the resonant normal form theorem for nonlinear Schr\"{o}dinger equation, seeing \Cref{th21}. 
Then we define the non-resonant set $\mc{U}^{N}_{\gamma}$ and show that the small divisor conditions are well preserved under perturbations, seeing \Cref{le31}.
In \Cref{sec5}, we firstly eliminate the truncated six-order resonant term $K_6$ by the integrable term $Z_{4}$, 
and thus get \Cref{th51} by one rational normal form step. Then we eliminate the higher order resonant rational terms by $Z_{4}+Z_{6}$, 
and thus get \Cref{th52} by many rational normal form steps.
In \Cref{sec6}, we estimate the measure of the non-resonant set $\mc{U}^{N}_{\gamma}$, seeing \Cref{le32}.
In \Cref{sec7}, combining the rational normal form theorem with the measure estimate, we complete the proof of \Cref{th11}.

\section{Notations and main results}
\label{sec2}

Given a function $u\in L^{2}(\mb{T})$, define its Fourier coefficients $u_a:=\frac{1}{2\pi}\int_{\mb{T}}u(x)e^{-{\rm i}ax}dx$, $a\in\mb{Z}$. In this paper, we identify the function $u$ with its sequence of Fourier coefficients $\{u_a\}_{a\in\mb{Z}}$.
Given a real sequence $\mr{w}=\{\mr{w}_a\}_{a\in\mb{Z}}$ with $1\leq\mr{w}_a\leq\mr{w}_{a'}$ for $|a|\leq|a'|$, define the Hilbert space:
\begin{equation}
\label{form7-7-1}
\mr{h}_{\mr{w}}:=\{u=\{u_a\}_{a\in\mb{Z}}\in L^{2}(\mb{T}) \mid |u|_{\mr{w}}^{2}:=\sum_{a\in\mb{Z}}\mr{w}_a^{2}|u_a|^{2}<+\infty\}.
\end{equation}
Specially, taking $\mr{w}_a:=|a|^{s}$ for $a\neq0$ and $\mr{w}_0=1$, it is the Sobolev space $H^{s}$ defined in \eqref{form7-7-1'}, and taking $\mr{w}_a:=e^{\rho|a|^{\theta}}$, it is the Gevrey space $\mc{G}_{\rho,\theta}$ defined in \eqref{form7-7-1''}.
\\\indent The scale of phase spaces
\begin{equation}
\label{form22}
\ms{P}_{\mr{w}}:=\mr{h}_{\mr{w}}\oplus\mr{h}_{\mr{w}}\ni (u,\bar{u})= (\{u_{a}\}_{a\in\mb{Z}}, \{\bar{u}_{a}\}_{a\in\mb{Z}})
\end{equation}
is endowed with the symplectic structure $-{\rm i}\sum_{a\in\mb{Z}}du_{a}\wedge d\bar{u}_{a}$.
For a Hamiltonian function $H(u,\bar{u})$, define its vector field
\begin{equation}
\label{form23}
X_{H}(u,\bar{u})=-{\rm i}\sum_{a\in\mb{Z}}\Big(\frac{\pa H}{\pa\bar{u}_{a}}\frac{\pa}{\pa u_{a}}
-\frac{\pa H}{\pa u_{a}}\frac{\pa }{\pa\bar{u}_{a}}\Big),
\end{equation}
and for two Hamiltonian functions $H(u,\bar{u})$ and $F(u,\bar{u})$, define their Poisson bracket
\begin{equation}
\label{form24}
\{H,F\}=-{\rm i}\sum_{a\in\mb{Z}}\Big(\frac{\pa H}{\pa u_a}\frac{\pa F}{\pa\bar{u}_a}-\frac{\pa H}{\pa\bar{u}_a}\frac{\pa F}{\pa u_a}\Big).
\end{equation}
\indent In a brief statement, we identify $\mb{C}^{\mb{Z}}\times\mb{C}^{\mb{Z}}\simeq
\mb{C}^{\mb{U}_{2}\times\mb{Z}}$ with $\mb{U}_{2}=\{\pm1\}$ and use the convenient notation $z=(z_{j})_{j=(\delta,a)\in\mb{U}_{2}\times\mb{Z}}$, where $$z_{j}=\left\{\begin{aligned}u_{a}, &\;\text{when}\;\delta=\;\;\,1,
\\\bar{u}_{a}, &\;\text{when}\;\delta=-1.
\end{aligned}\right.$$
Let $\bar{j}=(-\delta,a)$, then $\bar{z}_{j}=z_{\bar{j}}$. Set $|j|=|a|$, $\mr{w}_j=\mr{w}_a$ and define
\begin{equation}
\label{form25}
|z|_{\mr{w}}^{2}:=\sum_{j\in\mb{U}_{2}\times\mb{Z}}\mr{w}_j^{2}|z_{j}|^{2}=|u|^{2}_{\mr{w}}+|\bar{u}|_{\mr{w}}^{2}.
\end{equation}
For $\bs{j}=(j_{1},\cdots,j_{l})\in(\mb{U}_{2}\times\mb{Z})^{2l}$, let $j_{1}^{*}\geq\cdots\geq j_{l}^{*}$ denotes the decreasing rearrangement of  $\{|j_{1}|,\cdots,|j_{l}|\}$, and then $j_{l}^*=\mu_{\min}(\bs{j})$. Denote the monomial $z_{\bs{j}}=z_{j_{1}}\cdots z_{j_{l}}$.
Denote the set
\begin{equation}
\label{form26}
\mc{Z}_{2l}=\{\bs{j}=(\delta_{i},a_{i})_{i=1}^{2l}\in(\mb{U}_{2}\times\mb{Z})^{2l} \mid\sum_{i=1}^{2l}\delta_{i}=0\},
\end{equation}
the zero momentum set
\begin{equation}
\label{form27}
\mc{M}_{2l}=\{\bs{j}\in\mc{Z}_{2l} \mid \mc{M}(\bs{j}):=\sum_{i=1}^{2l}\delta_{i}a_{i}=0\},
\end{equation}
the resonant set
\begin{equation}
\label{form28}
\mc{R}_{2l}=\{\bs{j}\in\mc{M}_{2l} \mid \omega_{\bs{j}}:=\sum_{i=1}^{2l}\delta_{i}a_{i}^{2}=0\}
\end{equation}
and the integrable set
\begin{equation}
\label{form29}
\mc{I}_{2l}=\big\{\bs{j}\in\mc{R}_{2l}\mid \exists\;\text{permutation}\;\sigma,\;\text{s.t.}\;\forall i,\; \delta_{i}=-\delta_{\sigma(i)},\;a_{i}=a_{\sigma(i)}\big\}.
\end{equation}
Write $\mc{Z}=\bigcup_{l\geq0}\mc{Z}_{2l}$, $\mc{M}=\bigcup_{l\geq0}\mc{M}_{2l}$, $\mc{R}=\bigcup_{l\geq0}\mc{R}_{2l}$ and $\mc{I}=\bigcup_{l\geq0}\mc{I}_{2l}$.
For any $\bs{j}\in\mc{Z}$, denote $Irr({\bs{j}})$ as the non-integrable part of $\bs{j}$, namely the subsequence of maximal length $(j'_1,\cdots,j'_{2p})$ containing no action in the sense, i.e., $j'_i\neq\bar{j'_k}$ for any $i,k=1,\cdots,2p$. Let
\begin{equation}
\label{form210}
Irr(\mc{R})=\{Irr(\bs{j})\mid\bs{j}\in\mc{R}\}.
\end{equation}
Denote $\#\bs{j}$ as the length of $\bs{j}$. Notice that if $\bs{j}\in Irr(\mc{R})$, then $\#\bs{j}\geq6$.
In the following paper, we use the notation $a\lesssim b$ to mean that there a positive constant $C$ such that $a\leq Cb$.

\subsection{Rational Hamiltonian functions}
\label{subsec21}
The denominator of rational Hamiltonian functions in the Hilbert space $\mr{h}_{\mr{w}}$ consists of some small divisors with the following assumptions.
For any $N\geq1$, assume that for any $\bs{j}\in Irr(\mc{R})$ with $j_1^*\leq N$ and $\#\bs{j}=2l$,  the small divisor $\Omega_{\bs{j}}^{(4)}$ are real and linear with respect to the action $I:=\{I_a\}_{a\in\mb{Z}}$; 
the small divisor $\Omega_{\bs{j}}^{(6)}$ are real and of order at most two with respect to the action $I$, where $\Omega_{\bar{\bs{j}}}^{(4)}=-\Omega_{\bs{j}}^{(4)}$, $\Omega_{\bar{\bs{j}}}^{(6)}=-\Omega_{\bs{j}}^{(6)}$. For any $a\in\mb{Z}$, the coefficients of $\pa_{I_a}\Omega_{\bs{j}}^{(4)}$ and $\pa_{I_a}\Omega_{\bs{j}}^{(6)}$ are less than $l$ up to a constant multiply.
Besides, assume that there exists $\gamma\in(0,1)$ such that
\begin{equation}
\label{form7-6-1}
|\Omega_{\bs{j}}^{(4)}|>\gamma(2N)^{-12l}|z|_{\mr{w}}^{2}\mr{w}_{\mu_{\min}(\bs{j})}^{-2},
\end{equation}
\begin{equation}
\label{form7-6-2}
|\Omega_{\bs{j}}^{(6)}|>\gamma(2N)^{-12l}|z|_{\mr{w}}^{2}\max\{\mr{w}_{\mu_{\min}(\bs{j})}^{-2},\gamma|z|_{\mr{w}}^{2}\}.
\end{equation}

Firstly, we define a family of index sets to control the rational fractions.
\begin{definition}
\label{def21}
For any $N\geq1$ and integer $l\geq2$, denote $\mc{H}_{2l}^{N}$ as the family of all the elements $\Gamma_{2l}$, where
\begin{equation}
\label{form41}
\Gamma_{2l}\subseteq\mc{R}\times\bigcup_{q\geq0}(Irr(\mc{R}))^{q}\times\bigcup_{q'\geq0}(Irr(\mc{R}))^{q'}\times\mb{N}
\end{equation}
satisfies the following conditions:
\begin{enumerate}
%
\item\label{def41-3} for any $(\bs{j},\bs{h},\bs{k},n)\in\Gamma_{2l}$, one has 
\begin{itemize}
\item[(1-1)] the order $2l$, i.e., $\#\bs{j}-2\#\bs{h}-4\#\bs{k}=2l$;
\item[(1-2)] $\#\bs{j}\geq6$ and $(Irr(\bs{j}))_{1}^{*}\leq N$;
\item[(1-3)]\label{def41-2-3} $\bs{h}=(\bs{h}_{m})_{m=1}^{\#\bs{h}}$ with $6\leq\#\bs{h}_{m}\leq\#\bs{j}$ and $(h_{m})_{1}^{*}\leq N$;
\item[(1-4)]\label{def41-2-4} $\bs{k}=(\bs{k}_{m})_{m=1}^{\#\bs{k}}$ with $6\leq\#\bs{k}_{m}\leq\#\bs{j}$ and $(k_{m})_{1}^{*}\leq N$;
\item[(1-5)] $n\leq\#\bs{h}$;
\end{itemize}
\item\label{def41-2} $\max_{\tilde{\bs{j}}\in\mc{R}_{2p}}\#\{(\bs{j},\bs{h},\bs{k},n)\in\Gamma_{2l}\mid \bs{j}=\tilde{\bs{j}}\}\leq (2p)^{2p-5}$;
\item\label{def41-4} for any $(\bs{j},\bs{h},\bs{k},n)\in\Gamma_{2l}$, one has the global controls
\begin{equation}
\label{form42}
\prod_{m=1}^{\#\bs{h}}\mr{w}_{\mu_{\min}(\bs{h}_{m})}^{2}\leq\frac{\prod_{m=1}^{\#\bs{j}}\mr{w}_{j_{m}}}{\mr{w}_{j^{*}_{1}}^{2}},
\end{equation}
\begin{equation}
\label{form43}
\big(\prod_{m=1}^{\#\bs{h}}\mr{w}_{\mu_{\min}(\bs{h}_{m})}^{2}\big)\max_{m}\{\mr{w}_{\mu_{\min}(\bs{h}_{m})}^{2},\mr{w}_{\mu_{\min}(\bs{k}_{m})}^{2}\}\leq\prod_{m=1}^{\#\bs{j}}\mr{w}_{j_{m}}.
\end{equation}
\end{enumerate}
\end{definition}
Then we define rational Hamiltonian functions in the Hilbert space \eqref{form7-7-1}.
\begin{definition}
\label{def22}
Being given $\Gamma_{2l}\in\mc{H}_{2l}^{N}$, define the formal rational Hamiltonian function in the Hilbert space \eqref{form7-7-1}:
\begin{equation}
\label{form44}
Q_{\Gamma_{2l}}[\bs{c}](z)=\sum_{J:=(\bs{j},\bs{h},\bs{k},n)\in\Gamma_{2l}}c_{J}\frac{z_{\bs{j}}}{\prod\limits_{m=1}^{n}\Omega^{(4)}_{\bs{h}_{m}}\prod\limits_{m=n+1}^{\#\bs{h}}\Omega^{(6)}_{\bs{h}_{m}}\prod\limits_{m=1}^{\#\bs{k}}\Omega^{(6)}_{\bs{k}_{m}}}
\end{equation}
with convention that $\prod_{m=1}^{0}\bullet=1$,  
where the coefficient $\bs{c}=\{c_{J}\}_{J\in\Gamma_{2l}}$ satisfies
the estimate $\|\bs{c}\|_{\ell^{\infty}}:=\sup_{J\in\Gamma_{2l}}|c_{J}|<+\infty$ and the reality condition 
$$c_{(\bs{j},\bs{h},\bs{k},n)}=\bar{c}_{(\bar{\bs{j}},\bs{h},\bs{k},n)}=(-1)^{\#\bs{h}}c_{(\bs{j},\bar{\bs{h}},\bs{k},n)}=(-1)^{\#\bs{k}}c_{(\bs{j},\bs{h},\bar{\bs{k}},n)}.$$ 
\end{definition}
\begin{remark}
\label{re41}
There are several notes for the above two definitions.
\begin{enumerate}[(1)]
%
\item\label{re41-2} If $\Gamma_{2l}\in\mc{H}_{2l}^{N}$ satisfies $\Gamma_{2l}\subseteq\mc{I}\times\bigcup_{q\geq0}(Irr(\mc{R}))^{q}\times\bigcup_{q'\geq0}(Irr(\mc{R}))^{q'}\times\mb{N}$, then $Q_{\Gamma_{2l}}$ is an integrable function, i.e. it only depends on the action $I$.
\item\label{re41-3} Being given $\Gamma_{2l}\in\mc{H}_{2l}^{N}$, if $\#\bs{j}\lesssim l$ for any $(\bs{j},\bs{h},\bs{k},n)\in\Gamma_{2l}$, then the rational Hamiltonian function \eqref{form44} can be rewritten as
\begin{equation}
\label{form45}
Q_{\Gamma_{2l}}[\bs{c}](z)=\sum_{(p,q,q')\in\mc{F}_{2l}}\sum_{(\bs{j},\bs{h},\bs{k},n)\in\Gamma_{2l}^{(p,q,q')}}c_{J}\frac{z_{\bs{j}}}{\prod\limits_{m=1}^{n}\Omega^{(4)}_{\bs{h}_{m}}\prod\limits_{m=n+1}^{q}\Omega^{(6)}_{\bs{h}_{m}}\prod\limits_{m=1}^{q'}\Omega^{(6)}_{\bs{k}_{m}}},
\end{equation}
where $\mc{F}_{2l}\subseteq\mb{N}^{3}$ is a finite set with $\#\mc{F}_{2l}\lesssim l^{3}$ and for any $(p,q,q')\in\mc{F}_{2l}$, one has $p-q-2q'=l$, $\Gamma^{(p,q,q')}_{2l}=\Gamma_{2l}\bigcap\big(\mc{R}_{2p}\times(Irr(\mc{R}))^{q}\times(Irr(\mc{R}))^{q'}\times\mb{N}\big)$. Besides, the condition \eqref{def41-2}  in \Cref{def21} shows that if $\bs{j}$ is fixed, then the number of $(\bs{j},\bs{h},\bs{k},n)$ is finite and can be controlled $p$.
%
\item\label{re41-4} In the iterative process, the denominator of solutions of the homological equations is formed by the denominator of the previous term and the small divisor, which corresponds to the irreducible parts of the numerator's index in the previous term. Thus it is nature to propose the conditions (1-3) and (1-4)  in \Cref{def21}, namely the number of all indexes $\bs{h}_{m},\bs{k}_{m}$ in the denominator can be controlled by the number of the index $\bs{j}$ in the numerator.
%
\item\label{re41-5} The conditions \eqref{form42} and \eqref{form43} ensure that the rational Hamiltonian function $Q_{\Gamma_{2l}}$ are well defined in the Hilbert space $\mr{h}_{\mr{w}}$, that is to say, when we use the small divisor conditions \eqref{form7-6-1} and \eqref{form7-6-2} to estimate $\Omega_{\bs{h}_m}^{(4)}$, $\Omega_{\bs{h}_m}^{(6)}$ and $\Omega_{\bs{k}_m}^{(6)}$ in \eqref{form44}, the conditions \eqref{form42} and \eqref{form43} are exactly adequate to bound the rational function and its derivative. 
\end{enumerate}
\end{remark}
The following proposition shows that the structure of rational fractions  in \Cref{def21} is well kept under the Poisson bracket.
\begin{proposition}
\label{le42}
Given integers $l_{1},l_{2}\geq2$, for any $\Gamma_{2l_{1}}\in\mc{H}_{2l_{1}}^{N}$ and $\Gamma_{2l_{2}}\in\mc{H}_{2l_{2}}^{N}$, there exists $\Gamma_{2l}\in\mc{H}_{2l}^{N}$ with $l:=l_{1}+l_{2}-1$ such that
\begin{equation}
\label{form423}
\{Q_{\Gamma_{2l_{1}}}[\tilde{\bs{c}}],Q_{\Gamma_{2l_{2}}}[\tilde{\tilde{\bs{c}}}]\}=Q_{\Gamma_{2l}}[\bs{c}],
\end{equation}
where $Q_{\Gamma_{2l_{1}}}[\tilde{\bs{c}}],Q_{\Gamma_{2l_{2}}}[\tilde{\tilde{\bs{c}}}],Q_{\Gamma_{2l}}[\bs{c}]$ are defined in \eqref{form44}.
Moreover, if $\#\bs{j}'\lesssim l_1$, $\#\bs{j}''\lesssim l_2$ for any $(\bs{j}',\bs{h}',\bs{k}',n_1)\in\Gamma_{2l_1}$, $(\bs{j}'',\bs{h}'',\bs{k}'',n_2)\in\Gamma_{2l_2}$, then one has the coefficient estimate
\begin{equation}
\label{form424}
\|\bs{c}\|_{\ell^{\infty}}\lesssim l^{5}\|\tilde{\bs{c}}\|_{\ell^{\infty}}\|\tilde{\tilde{\bs{c}}}\|_{\ell^{\infty}}.
\end{equation}
\end{proposition}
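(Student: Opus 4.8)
The plan is to compute the Poisson bracket of two rational monomials directly and verify that the outcome again belongs to the family $\mc{H}_{2l}^N$. Writing $Q_{\Gamma_{2l_1}}[\tilde{\bs c}]$ and $Q_{\Gamma_{2l_2}}[\tilde{\tilde{\bs c}}]$ as sums of terms $c_J z_{\bs j}/D_J$ and $c_K z_{\bs k}/D_K$ (with $D_J,D_K$ the products of small divisors), the bracket \eqref{form24} produces a finite sum over pairs $(J,K)$ and over indices $i_1,i_2$ where a $z$-variable in one monomial is paired (as $j$ and $\bar j$) with a $z$-variable in the other. Each contribution splits, by the Leibniz rule, into three types: (a) the derivative hits both numerators $z_{\bs j}$ and $z_{\bs k}$; (b) it hits $z_{\bs j}$ and a divisor $\Omega$ in $D_K$; (c) it hits two divisors, one in each denominator. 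First I would set up notation for these contributions and observe that, since every $\Omega^{(4)}$ is linear and every $\Omega^{(6)}$ is at most quadratic in the actions, differentiating $1/\Omega$ in $\bar u_a$ (or $u_a$) produces a factor $u_a$ (resp.\ $\bar u_a$) times $(\pa_{I_a}\Omega)/\Omega^{2}$, i.e.\ it raises the power of that divisor in the denominator by one and inserts one new numerator variable. This is precisely what the conditions (1-3), (1-4) and $n\le\#\bs h$ in \Cref{def21} are designed to absorb — see Remark \ref{re41}\eqref{re41-4}.

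Next I would identify, for each contribution, the new index data $(\bs j^{\mathrm{new}},\bs h^{\mathrm{new}},\bs k^{\mathrm{new}},n^{\mathrm{new}})$. The new numerator is obtained from $\bs j$ and $\bs k$ by deleting the two contracted entries (and, in cases (b),(c), appending the entries forced by the differentiated divisors); the new denominator list is the concatenation of the old ones together with the extra copies of the differentiated $\Omega$'s. One checks the arithmetic bookkeeping: momentum and the resonance relation $\omega_{\bs j}=0$ are preserved because the contracted pair $j,\bar j$ contributes $0$ to both $\sum\delta_i a_i$ and $\sum\delta_i a_i^2$, and each inserted numerator block coming from a divisor $\Omega_{\bs h_m}$ carries the index $\bs h_m\in Irr(\mc R)$, which is itself resonant; hence $\bs j^{\mathrm{new}}\in\mc R$ and, after extracting its irreducible part, $(Irr(\bs j^{\mathrm{new}}))^*_1\le N$ since all indices involved already had first modulus $\le N$. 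The order count $\#\bs j^{\mathrm{new}}-2\#\bs h^{\mathrm{new}}-4\#\bs k^{\mathrm{new}}=2l$ with $l=l_1+l_2-1$ follows from $(2l_1)+(2l_2)-2=2l$ and tracking that each divisor added to $\bs h$ or $\bs k$ comes with the right number of numerator entries. The multiplicity bound (2) of \Cref{def21} is verified by counting: for a fixed resulting $\tilde{\bs j}$, the number of ways it can arise is controlled polynomially in $p$, using the bounds already imposed on $\Gamma_{2l_1}$ and $\Gamma_{2l_2}$.

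The heart of the argument — and the step I expect to be the main obstacle — is verifying that the two global control inequalities \eqref{form42} and \eqref{form43} survive the bracket \emph{simultaneously}, as emphasized in the text after \eqref{form43'}. For a type-(a) contraction, deleting the two largest-or-not entries $j_{i_1},k_{i_2}$ from the numerator and merging the denominators, one must show the merged left-hand sides are still dominated by the merged $\prod \mr w_{j_m}$ divided by $\mr w_{j^*_1}^2$. The delicate point is that each of \eqref{form42} and \eqref{form43} individually can fail after a contraction — one gains a $\mr w$ from the new numerator entry in case (b)/(c) but loses $\mr w_{j_{i_1}}\mr w_{k_{i_2}}$ from the deletion — so one has to play the two inequalities against each other: the surplus $\max_m\{\mr w^2_{\mu_{\min}(\bs h_m)},\mr w^2_{\mu_{\min}(\bs k_m)}\}$ in \eqref{form43} is exactly the reserve needed to re-establish \eqref{form42} for the larger monomial, and vice versa. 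Concretely I would split into cases according to whether the deleted entries are among the two largest moduli of their monomials, use monotonicity $\mr w_a\le\mr w_{a'}$ for $|a|\le|a'|$ together with the fact that a contracted pair shares a common modulus, and in the divisor-differentiation cases use that $\mu_{\min}$ of the new divisor block is $\le$ the moduli already present. Once both inequalities are re-derived for every contribution, defining $\Gamma_{2l}$ as the union over all contributions of the resulting index data, and $\bs c$ as the corresponding coefficients, gives \eqref{form423}. Finally, for the coefficient estimate \eqref{form424} under the extra hypothesis $\#\bs j'\lesssim l_1$, $\#\bs j''\lesssim l_2$, I would count: the number of contracted pairs is $O(\#\bs j'\cdot\#\bs j'')=O(l_1 l_2)=O(l^2)$, the derivative of a divisor produces a factor bounded by its degree $\lesssim l$ (the coefficients of $\pa_{I_a}\Omega$ are $\lesssim l$ by assumption), and summing over the $O(l^3)$-size set $\mc F_{2l}$ from Remark \ref{re41}\eqref{re41-3} — using that for fixed $\bs j$ the number of denominator configurations is controlled by $p\lesssim l$ — yields the bound $\|\bs c\|_{\ell^\infty}\lesssim l^5\,\|\tilde{\bs c}\|_{\ell^\infty}\|\tilde{\tilde{\bs c}}\|_{\ell^\infty}$.
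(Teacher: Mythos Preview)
Your overall plan matches the paper's --- compute the bracket termwise via Leibniz and verify \Cref{def21} case by case --- but two points need correction before it goes through.

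First, your type (c) contributes nothing: since every $\Omega^{(4)},\Omega^{(6)}$ depends only on the actions $I$, any two such functions Poisson--commute under \eqref{form24}, so the divisor--divisor terms cancel identically. The bracket therefore splits only into your type (a) and type (b) (and its symmetric counterpart). Second --- and this is the structural gap --- within type (b) you must distinguish whether $\partial_{I_a}$ hits the \emph{linear} or the \emph{quadratic} part of $\Omega^{(6)}$. Hitting the linear part produces a constant, so the new numerator is $(\bs j',\bs j'')$ and the squared divisor is recorded by appending one copy to $\bs h$; hitting the quadratic part produces a factor $I_b$, so the new numerator is $(\bs j',\bs j'',j,\bar j)$ with \emph{two} extra entries and the squared divisor goes to $\bs k$, not $\bs h$. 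This dichotomy is precisely what makes the order identity $\#\bs j-2\#\bs h-4\#\bs k=2l$ hold in every sub-case and what explains the separate roles of $\bs h$ and $\bs k$ in \eqref{form42}--\eqref{form43}; without it you cannot verify condition (1-1) or carry out the check of \eqref{form43} in the quadratic sub-case (which uses \eqref{form428} and \eqref{form430} simultaneously). The engine behind the type-(b) inequalities is the observation you only gesture at: the bracket $\{1/\Omega_{\bs h'_i},z_{\bs j''}\}$ is nonzero only if $\bs h'_i$ and $\bs j''$ share an index $a$, which forces $\mu_{\min}(\bs h'_i)\le |a|\le j''^{*}_1$ and hence $\mr w_{\mu_{\min}(\bs h'_i)}\le \mr w_{j''^{*}_1}$; this is the exact surplus that re-establishes \eqref{form42}--\eqref{form43} after concatenation.

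For \eqref{form424}, the right bookkeeping is to fix the \emph{output} index $J\in\Gamma_{2l}$ and count how many input pairs $(J',J'')$ can produce it: choosing $(\#\bs h',\#\bs k',n_1)$ costs $\lesssim l^3$, locating the contraction/splitting costs $\lesssim l^2$, and the bound $|\partial_{I_a}\Omega|\lesssim l$ gives the last factor. Your count via ``$O(l^2)$ contracted pairs times $\#\mc F_{2l}=O(l^3)$'' mixes a forward enumeration with a sum over $(p,q,q')$, which is not what $\|\bs c\|_{\ell^\infty}=\sup_J|c_J|$ requires.
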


In the following proposition, we estimate the vector field of rational functions in the Hilbert space $\mr{h}_{\mr{w}}$.
\begin{proposition}
\label{le41}
For any rational function $Q_{\Gamma_{2l}}[\bs{c}](z)$ in \eqref{form45} with $\Gamma_{2l}\in\mc{H}_{2l}^{N}$ and $\|\bs{c}\|_{\ell^\infty}<+\infty$, one has
\begin{equation}
\label{form46}
|X_{Q_{\Gamma_{2l}}[\bs{c}]}(z)|_{\mr{w}}\lesssim l^{3}\max_{(p,q,q')\in\mc{F}_{2l}}(2p)^{2p-3}\|\bs{c}\|_{\ell^{\infty}}\frac{(2N)^{12p(q+q')+15p}}{\gamma^{q+2q'+1}}|z|_{\mr{w}}^{2l-1}.
\end{equation}
\end{proposition}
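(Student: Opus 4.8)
The plan is to estimate $X_{Q_{\Gamma_{2l}}[\bs{c}]}$ term by term, reducing to a single rational monomial and then summing. First I would write the vector field componentwise: for each index $j_0\in\mb{U}_2\times\mb{Z}$, the component $\big(X_{Q_{\Gamma_{2l}}[\bs{c}]}\big)_{j_0}$ is $\mp\mr{i}\,\pa_{z_{\bar{j_0}}}Q_{\Gamma_{2l}}[\bs{c}]$, and the derivative hits either the numerator $z_{\bs{j}}$ (producing a monomial of degree $\#\bs{j}-1$) or one of the small divisors $\Omega^{(4)}_{\bs{h}_m},\Omega^{(6)}_{\bs{h}_m},\Omega^{(6)}_{\bs{k}_m}$ via the chain rule through the actions $I_a=|z|$-type quantities (producing a term with one extra small divisor in the denominator and one extra $z$-factor from $\pa_{z_{\bar j_0}}I_a$). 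I would treat these two cases separately, using the hypotheses on $\pa_{I_a}\Omega^{(4)}_{\bs{j}}$, $\pa_{I_a}\Omega^{(6)}_{\bs{j}}$ (real, of the stated polynomial order in $I$, with coefficients $\lesssim l$) together with the lower bounds \eqref{form7-6-1}--\eqref{form7-6-2}.

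Next I would carry out the key pointwise estimate on a single rational monomial $\dfrac{z_{\bs{j}}}{\prod_{m=1}^{n}\Omega^{(4)}_{\bs{h}_m}\prod_{m=n+1}^{q}\Omega^{(6)}_{\bs{h}_m}\prod_{m=1}^{q'}\Omega^{(6)}_{\bs{k}_m}}$. Using \eqref{form7-6-1}--\eqref{form7-6-2} one bounds each small divisor from below by $\gamma(2N)^{-12l}|z|^2_{\mr{w}}$ times $\mr{w}^{-2}_{\mu_{\min}(\cdot)}$ (resp.\ $\max\{\mr{w}^{-2}_{\mu_{\min}},\gamma|z|^2\}$), so the whole denominator is bounded below by $\gamma^{q+2q'}(2N)^{-12l(q+q')}|z|^{2(q+2q')}_{\mr{w}}\prod_m\mr{w}^{-2}_{\mu_{\min}(\bs{h}_m)}\cdot(\text{factors from }\bs{k}_m)$. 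The numerator $|z_{\bs{j}}|$ is bounded by $|z|_{\mr{w}}^{\#\bs{j}}\prod_m\mr{w}^{-1}_{j_m}$. Here is where conditions \eqref{form42} and \eqref{form43} enter decisively: \eqref{form42} (using also that $\mu_{\min}(\bs{k}_m)\le\mu_{\min}(\bs{h}_{\text{something}})$ is controlled via the $\max$) lets the product $\prod_m\mr{w}^2_{\mu_{\min}}$ be absorbed into $\prod_m\mr{w}_{j_m}/\mr{w}^2_{j^*_1}$, and the leftover $\mr{w}^{-2}_{j^*_1}=\mr{w}^{-2}_{\mu_1(\bs{j})}$ together with $\mr{w}^{-1}_{j^*_1}$ gives the needed $\mr{w}$-decay to convert the $|z|_{\mr{w}}$-powers and extract the $\ell^1$-summability of the vector field; when the derivative hits a denominator, \eqref{form43} supplies the extra factor $\max_m\mr{w}^2_{\mu_{\min}}$ needed to absorb the new small divisor. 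Since $\#\bs{j}-2q-4q'=2l$ and $|z|^{2(q+2q')}_{\mr{w}}$ in the denominator cancels $|z|^{2(q+2q')}_{\mr{w}}$ from the numerator's powers, one is left with $|z|^{2l-1}_{\mr{w}}$ as claimed.

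Then I would sum over $J\in\Gamma_{2l}$. By \Cref{re41}\eqref{re41-3} the sum splits over $(p,q,q')\in\mc{F}_{2l}$ with $\#\mc{F}_{2l}\lesssim l^3$, which produces the $l^3$ prefactor and the $\max_{(p,q,q')}$. For fixed $(p,q,q')$ the number of tuples $(\bs{j},\bs{h},\bs{k},n)$ sharing a given $\bs{j}$ is $\le(2p)^{2p-5}$ by condition \eqref{def41-2} of \Cref{def21}, and one more counting factor (of order $(2p)^2$, coming from choosing the free index $j_0$ among the roughly $2p$ slots of $\bs{j}$, plus the $\lesssim l$ from $\pa_{I_a}\Omega$) upgrades $(2p)^{2p-5}$ to $(2p)^{2p-3}$; the remaining sum over the reduced index set is handled by the standard convolution/Young-type estimate (using $\mr{w}_a\ge1$ monotone and zero-momentum) that turns $\sum|z_{j_1}|\cdots$ into a power of $|z|_{\mr{w}}$. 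The powers of $2N$ and $\gamma$ are simply collected: $(2N)^{-12l(q+q')}$ in the denominator inverts to $(2N)^{12l(q+q')}$, and since $\#\bs{j}\le 2p$ (so $l\le p$) this is $\le(2N)^{12p(q+q')}$, with the extra $(2N)^{15p}$ absorbing the factors from the numerator decay-budget $\mr{w}^{-1}_{j^*_1}$ manipulations and the remaining counting; the factor $\gamma^{-(q+2q')}$ from the denominator and one extra $\gamma^{-1}$ from the $\max\{\cdot,\gamma|z|^2\}$ clause when a derivative hits a $\Omega^{(6)}$ give $\gamma^{-(q+2q'+1)}$.

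The main obstacle is the bookkeeping in the middle step — verifying that \eqref{form42} and \eqref{form43} are \emph{exactly} (not merely up to a constant depending on $p,q$) enough to absorb all the $\mr{w}_{\mu_{\min}}$ factors coming from the small-divisor lower bounds, including the subtle point that a derivative falling on a denominator forces use of \eqref{form43} rather than \eqref{form42}, and that the $\mu_{\min}(\bs{k}_m)$ terms (which appear without a matching $\prod\mr{w}^2_{\mu_{\min}(\bs{k}_m)}$ on the left of \eqref{form42}) are controlled only through the $\max$ in \eqref{form43}; getting the $\mr{w}$-power accounting to close with no constant depending on $p$ is the delicate heart of the argument, whereas the $(2N)$, $\gamma$, and combinatorial factors are routine once the $\mr{w}$-estimate is in place.
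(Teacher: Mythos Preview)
Your approach is essentially the paper's: split into $X^{(1)}$ (derivative hits the denominator) and $X^{(2)}$ (derivative hits the numerator), use \eqref{form43} for the former and \eqref{form42} for the latter, invoke the counting bound from condition~\eqref{def41-2}, and pick up the $l^3$ from $\#\mc{F}_{2l}\lesssim l^3$.

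One mechanism you have wrong, though, and it would bite you when you try to close the $\mr{w}$-accounting. You say the $\bs{k}_m$ factors are ``controlled only through the $\max$ in \eqref{form43}'' and hint at an inequality $\mu_{\min}(\bs{k}_m)\le\mu_{\min}(\bs{h}_{\cdot})$. There is no such inequality. In the basic bound on $|f_J|$ (used for $X^{(2)}$), each $\Omega^{(6)}_{\bs{k}_m}$ is bounded using the \emph{second} branch of \eqref{form7-6-2}, namely $|\Omega^{(6)}_{\bs{k}_m}|\ge\gamma^2(2N)^{-12p}|z|_{\mr{w}}^4$, which contributes $\gamma^{-2}|z|^{-4}_{\mr{w}}$ per factor but \emph{no} weight $\mr{w}_{\mu_{\min}(\bs{k}_m)}$ at all. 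This is exactly why \eqref{form42} has only the $\bs{h}_m$'s on its left-hand side, why the power of $\gamma$ is $q+2q'$ rather than $q+q'$, and why the order relation reads $\#\bs{j}-2q-4q'=2l$ with a $4$ in front of $q'$. The weights $\mr{w}_{\mu_{\min}(\bs{k}_m)}$ appear only in the $X^{(1)}$ estimate, where $\pa_{I_a}\Omega^{(6)}_{\bs{k}_m}/\Omega^{(6)}_{\bs{k}_m}$ is bounded via the \emph{first} branch of \eqref{form7-6-2}; taking the maximum over all $m$ produces a single factor $\max_m\{\mr{w}^2_{\mu_{\min}(\bs{h}_m)},\mr{w}^2_{\mu_{\min}(\bs{k}_m)}\}$ which, multiplied by the $\prod_m\mr{w}^2_{\mu_{\min}(\bs{h}_m)}$ already present in $|f_J|$, is precisely what \eqref{form43} absorbs. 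The extra $\gamma^{-1}$ comes from this one additional small-divisor lower bound, not from the $\max\{\cdot,\gamma|z|^2\}$ clause.

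A minor point on the summation: the paper does not use a convolution/Young argument but the crude Cauchy--Schwarz bound $\sum_{|j|\le N}\mr{w}_j|z_j|\le(4N+2)^{1/2}|z|_{\mr{w}}\le(2N)^{3/2}|z|_{\mr{w}}$, which is where $(2N)^{3p}$ of the $(2N)^{15p}=(2N)^{12p}\cdot(2N)^{3p}$ comes from (the $(2N)^{12p}$ is the extra small-divisor bound in $X^{(1)}$). For $X^{(2)}$ the sum over the free index $j_1$ is split according to whether $\bar{j}_1$ appears among $j_2,\ldots,j_{2p}$; in the second case $(Irr(\bs{j}))^*_1\le N$ forces $|j_1|\le N$, so the sum is finite.
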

The above two propositions will be proved in \Cref{sec3}.
\subsection{Nonlinear Schr\"{o}dinger equation in Gevrey spaces}
\label{subsec22}
Under the standard inner product on $L^{2}(\mb{T})$, the nonlinear Schr\"{o}dinger equation \eqref{form11} can be written as the following form:
\begin{equation}
\label{form12}
{\rm i}\frac{\pa u}{\pa t}=\frac{\pa H}{\pa \bar{u}}
\end{equation}
with the Hamiltonian function
\begin{equation}
\label{form13}
H=\frac{1}{2\pi}\int_{\mb{T}}|u_x|^{2}dx+\frac{1}{2\pi}\int_{\mb{T}}g(|u|^{2})dx,
\end{equation}
where $g(y)=\int_{0}^{y}\varphi(\eta)d\eta$.

For Gevrey spaces $\mc{G}_{\rho,\theta}$ in \eqref{form7-7-1''}, introduce the complex Gaussian measure formally defined by
\begin{equation}
d\mu_g=\frac{e^{-\sum_{a\in\mb{Z}}e^{2\rho|a|^{\theta}}|a|^{2}|u_{a}|^{2}}dud\bar{u}}{\int_{\mc{G}_{\rho,\theta}}e^{-\sum_{a\in\mb{Z}}e^{2\rho|a|^{\theta}}|a|^{2}|u_a|^{2}}dud\bar{u}}.
\end{equation}
Notice that the measure $\mu_g$ is viewed as weak limit of finite dimensional Gaussian measures. By Theorem 2.4 in \cite{K19}, the measure $\mu_g$ is countably additive on the space $\mc{G}_{\rho,\theta}$ with $\mu_g(\mc{G}_{\rho,\theta})=1$. Denote by $B_{\rho,\theta}(R)$ the open ball centered at the origin and of radius $R>0$ in $\mc{G}_{\rho,\theta}$. Then $0<\mu_g(B_{\rho,\theta}(R))<1$, and thus we could further define the Gaussian measure $\mu$ in the unit ball $B_{\rho,\theta}(1)$ by
\begin{equation}
\label{form15}
d\mu=\frac{d\mu_g}{\mu_g(B_{\rho,\theta}(1))}.
\end{equation}
Then we have the following result.
\begin{theorem}
\label{th11}
Fix $\rho>0$ and $\theta\in(0,1)$. For any $\beta\in(0,1)$, there exist $\varepsilon_{0}>0$ and an open set $\mc{V}_{\rho,\theta,\beta}\subseteq B_{\rho,\theta}(\varepsilon_0)$ such that for any $0<\varepsilon\leq\varepsilon_{0}$, if the initial datum $u(0,x)\in\mc{V}_{\rho,\theta,\beta}\cap B_{\rho,\theta}(\varepsilon)$,
then for any
\begin{equation}
\label{form7-2-1}
|t|\leq\varepsilon^{-|\ln\varepsilon|^{\beta}},
\end{equation}
the solution $u(t,x)$ of the nonlinear Schr\"{o}dinger equation \eqref{form11} satisfies
\begin{align}
\label{form16}
\|u(t,x)\|_{\rho,\theta}\leq2\varepsilon,&
\\\label{form17}
\sup_{a\in\mb{Z}}e^{2\rho|a|^{\theta}}|I_{a}(t)-I_{a}(0)|&\leq\varepsilon^{\frac{5}{2}},
\end{align}
where  the action $I_a=|u_a|^{2}$.
Moreover, one has the measure estimate
\begin{equation}
\label{form18}
\mu(\varepsilon u\in\mc{V}_{\rho,\theta,\beta})\geq1-\varepsilon^{\frac{1}{8}}.
\end{equation}
\end{theorem}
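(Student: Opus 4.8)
The plan is to combine the rational normal form machinery developed in the previous sections with a Gaussian measure estimate, following the standard Birkhoff-normal-form scheme but with the iteration depth $r$ depending on $\varepsilon$. First I would invoke the resonant normal form theorem (\Cref{th21}) to bring the Hamiltonian of \eqref{form11} into the form $H = Z_2 + Z_4 + Z_6 + K_{\geq 4}$ where $Z_2$ is the quadratic part, $Z_4, Z_6$ are the integrable (action-dependent) normal form terms used for frequency modulation, and the remainder is a smooth perturbation in $\mc{G}_{\rho,\theta}$. Then, on the non-resonant set $\mc{U}^N_\gamma$ (whose small-divisor conditions survive perturbations by \Cref{le31}), I would apply \Cref{th51} to remove $K_6$ using $Z_4$, and then iterate \Cref{th52}: at each of $r$ rational normal form steps the resonant rational terms are eliminated against $Z_4 + Z_6$, with the structural conditions \eqref{form42}, \eqref{form43} preserved by \Cref{le42} and the vector fields controlled by \Cref{le41}. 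The key point, absent from \cite{BFG20b,BG21}, is to keep explicit track of the dependence of all constants on $r$, $N$, $\gamma$: the coefficient bound from \eqref{form424} grows like $l^5$ per bracket and the vector-field bound \eqref{form46} carries factors $(2N)^{O(pq)}\gamma^{-O(q)}$, so the accumulated error after $r$ steps is something like $(C r)^{Cr} N^{Cr^2} \gamma^{-Cr} \varepsilon^{r}$ times $|z|_{\mr{w}}$.

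Next I would optimize the free parameters. One chooses $N = N(\varepsilon)$, $\gamma = \gamma(\varepsilon)$ and $r = r(\varepsilon)$ so that the normal-form remainder, which drives the action variation, is bounded by roughly $\varepsilon^{r+1}$ while the combinatorial/small-divisor losses $(Cr)^{Cr} N^{Cr^2}\gamma^{-Cr}$ remain subdominant. Taking $r \sim |\ln\varepsilon|^\beta$ with $\beta \in (0,1)$, $N$ a suitable power of $|\ln\varepsilon|$, and $\gamma$ a small power of $\varepsilon$, a Gronwall/bootstrap argument on the normalized Hamiltonian then gives, for $|t| \le \varepsilon^{-|\ln\varepsilon|^\beta}$, the a priori bound $\|u(t)\|_{\rho,\theta} \le 2\varepsilon$ in \eqref{form16}, and since the normal form up to order $2r$ is integrable (by \Cref{re41}\eqref{re41-2}, $Q_{\Gamma}$ depending only on $I$ when indexed by $\mc{I}$), the actions $I_a$ are almost conserved, yielding \eqref{form17} with the weighted loss $e^{2\rho|a|^\theta}$ absorbed into the $\varepsilon^{5/2}$ margin. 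The bootstrap must be set up carefully so that $u(t)$ stays in the non-resonant set $\mc{U}^N_\gamma$ (equivalently in $\mc{V}_{\rho,\theta,\beta} := \varepsilon^{-1}(\mc{U}^N_\gamma \cap B_{\rho,\theta}(\varepsilon))$, suitably defined) throughout the time interval, using \Cref{le31} to propagate the small-divisor conditions under the $O(\varepsilon^2)$-small action drift.

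Finally, the measure estimate \eqref{form18} follows from \Cref{le32}: the complement of $\mc{U}^N_\gamma$ is a union over resonant index vectors $\bs{j}$ with $j_1^* \le N$, $\#\bs{j} = 2l$ of the sets where $|\Omega^{(4)}_{\bs{j}}|$ or $|\Omega^{(6)}_{\bs{j}}|$ violates \eqref{form7-6-1}, \eqref{form7-6-2}. Since $\Omega^{(4)}_{\bs{j}}$ is a nontrivial affine (resp. $\Omega^{(6)}_{\bs{j}}$ quadratic) function of the actions with controlled coefficients, each such sublevel set has Gaussian measure $\lesssim \gamma (2N)^{-12l} \cdot (\text{Jacobian factors})$; summing the geometric-type series over $l \geq 3$ and the $\lesssim N^{2l-1}$ choices of $\bs{j}$ with the $(2N)^{-12l}$ gain gives a total of order $\gamma$ times a convergent factor, hence $\le \varepsilon^{1/8}$ for the chosen $\gamma = \gamma(\varepsilon)$. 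I expect the main obstacle to be the bookkeeping in the iteration: one must verify that at every step the new index family still lies in $\mc{H}^N_{2l}$ (in particular condition \eqref{def41-2} on multiplicities and the paired conditions \eqref{form42}–\eqref{form43}, which are only jointly invariant), and simultaneously that the explicit $r$- and $N$-dependence of the constants is strong enough to close the optimization — the tension between needing $N$ large (for many Fourier modes to be non-resonant) and the $(2N)^{O(r^2)}$ loss is the delicate balance that forces $\beta < 1$ rather than $\beta = 1$.
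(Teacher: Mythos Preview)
Your proposal is essentially correct and follows the same approach as the paper: apply \Cref{th21}, then \Cref{th51} and \Cref{th52} on the non-resonant set $\mc{U}^N_\gamma$, track the $r,N,\gamma$ dependence explicitly, choose $r\sim|\ln\varepsilon|^\beta$, $N$ a power of $|\ln\varepsilon|$, $\gamma$ a small power of $\varepsilon$, run a bootstrap on the transformed variable, and finish with \Cref{le32} for the measure.

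One point is understated in your sketch and worth flagging. After the normal form there are \emph{two} kinds of remainder: the order-$\geq 2r+2$ terms (which you account for) and, for each $3\le l\le r$, the truncation terms $R_{2l}$ consisting of monomials with $(Irr(\bs j))_1^*>N$ (see \eqref{form52}). These are not higher order in $\varepsilon$; they are controlled only by the Gevrey decay of the data. The paper uses that for such $\bs j$ one has $j_3^*>\sqrt{N/(2l)}$ (equation \eqref{form68}), so \Cref{le02} gives a factor $e^{-(1-2^{\theta-1})\rho(N/2r)^{\theta/2}}$, and this is what forces $N=|\ln\varepsilon|^{5/\theta}$. Your phrasing ``$N$ large for many Fourier modes to be non-resonant'' misidentifies the mechanism: the small-divisor conditions on $\mc{U}^N_\gamma$ do not improve with $N$; rather, $N$ large is needed precisely to make the high-mode truncation remainder $R_{2l}$ sub-exponentially small. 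With that correction the optimization closes as you describe.
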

\begin{remark}
\label{re12}
To our knowledge, there is no result of sub-exponentially long time stability for equations with internal parameters in previous papers. Now, for instance, we compare \eqref{form7-2-1} with the stability time of equations with external parameters in \cite{CMW20,BMP20}.
%
%
In \cite{CMW20}, for derivate nonlinear schr\"{o}dinger equations in Gevrey spaces, the stability time is of order $\varepsilon^{-|\ln\varepsilon|^{\beta}}$ for any $0<\beta<\frac{1}{2}$; in \cite{BMP20}, for  nonlinear schr\"{o}dinger equations in Gevrey spaces, the stability time is of order $\varepsilon^{-|\ln\varepsilon|^{\frac{\theta}{4}}}$;
%
in the present paper, the stability time is of order $\varepsilon^{-|\ln\varepsilon|^{\beta}}$  for any $0<\beta<1$.  
The difference is due to coefficient estimates.
Very roughly speaking, in \cite{CMW20}, after $r$ iterative steps, the coefficients are bounded by $N^{r^{3}}$; in \cite{BMP20}, due to different small divisor conditions, the coefficients are bounded by $e^{r^{1+\frac{3}{\theta}}}$; in the present paper, the coefficients are bounded by $N^{r^{2}}$.  
\end{remark}
\begin{remark}
\label{re13}
Similarly with \cite{BFG20b,BG21}, we could assume that the actions $\{I_{a}\}_{a\in\mb{Z}}$ are independent and uniformly distributed in an infinite cube $\prod_{a\in\mb{Z}_{*}}(0,e^{-2\rho|a|^{\theta}}|a|^{-2})$, and then estimate the measure.
%
%
Moreover, instead of the measure \eqref{form15}, we could use the following Gaussian measure
$$d\mu=\frac{e^{-\sum_{a\in\mb{Z}}e^{2\rho'|a|^{\theta}}|u_{a}|^{2}}dud\bar{u}}{\int_{B_{\rho,\theta}(1)}e^{-\sum_{a\in\mb{Z}}e^{2\rho'|a|^{\theta}}|u_a|^{2}}dud\bar{u}},$$
where $\rho'$ is a constant with $\rho'>\rho$.
For long time stability by Gaussian and Gibbs measure, see \cite{BMT19}, in which one dimensional defocusing nonlinear Schr\"{o}dinger equation is investigated in the low regular Sobolev space $H^s$, $s<\frac{1}{2}$.
\end{remark}
\begin{remark}
Instead of the Gevrey space \eqref{form7-7-1''}, \Cref{th11} still holds true in the following analytic space
$$\mc{G}_{\alpha,\rho,\theta}:=\{u=\{u_{a}\}_{a\in\mb{Z}} \mid \|u\|^{2}_{\alpha,\rho,\theta}:=\sum_{a\in\mb{Z}}e^{2\alpha|a|+2\rho|a|^{\theta}}|u_a|^{2}<+\infty\},$$
where $\alpha$ is a positive constant. The proof is completely parallel.
\end{remark}

\section{Proof of properties of rational Hamiltonian functions}
\label{sec3}

\subsection{Proof of \Cref{le42}}
\label{subsec31}
%
Denote
$$Q_{\Gamma_{2l}}[\bs{c}](z)=\sum_{J:=(\bs{j},\bs{h},\bs{k},n)\in\Gamma_{2l}}c_{J}\frac{z_{\bs{j}}}{\prod\limits_{m=1}^{n}\Omega^{(4)}_{\bs{h}_{m}}\prod\limits_{m=n+1}^{\#\bs{h}}\Omega^{(6)}_{\bs{h}_{m}}\prod\limits_{m=1}^{\#\bs{k}}\Omega^{(6)}_{\bs{k}_{m}}}.$$
For any $J':=(\bs{j}',\bs{h}',\bs{k}',n_1)\in\Gamma_{2l_1}$ and $J'':=(\bs{j}'',\bs{h}'',\bs{k}'',n_2)\in\Gamma_{2l_2}$, one has
\begin{equation}
\label{form427}
\prod_{m=1}^{\#\bs{h}'}\mr{w}_{\mu_{\min}(\bs{h}'_{m})}^{2}\leq\frac{\prod_{m=1}^{\#\bs{j}'}\mr{w}_{j'_{m}}}{\mr{w}_{j'^{*}_{1}}^{2}},
\end{equation}
\begin{equation}
\label{form428}
\big(\prod_{m=1}^{\#\bs{h}'}\mr{w}_{\mu_{\min}(\bs{h}'_{m})}^{2}\big)\max_{m}\{\mr{w}_{\mu_{\min}(\bs{h}'_{m})}^{2},\mr{w}_{\mu_{\min}(\bs{k}'_{m})}^{2}\}\leq\prod_{m=1}^{\#\bs{j}'}\mr{w}_{j'_{m}},
\end{equation}
\begin{equation}
\label{form429}
\prod_{m=1}^{\#\bs{h}''}\mr{w}_{\mu_{\min}(\bs{h}''_{m})}^{2}\leq\frac{\prod_{m=1}^{\#\bs{j}''}\mr{w}_{j''_{m}}}{\mr{w}_{j''^{*}_{1}}^{2}},
\end{equation}
\begin{equation}
\label{form430}
\big(\prod_{m=1}^{\#\bs{h}''}\mr{w}_{\mu_{\min}(\bs{h}''_{m})}^{2}\big)\max_{m}\{\mr{w}_{\mu_{\min}(\bs{h}''_{m})}^{2},\mr{w}_{\mu_{\min}(\bs{k}''_{m})}^{2}\}\leq\prod_{m=1}^{\#\bs{j}''}\mr{w}_{j''_{m}}.
\end{equation}
Then we will mainly prove that conditions \eqref{form42} and \eqref{form43} still hold for any $J\in\Gamma_{2l}$.
In the following, we discuss $\{Q_{\Gamma_{2l_{1}}}[\tilde{\bs{c}}],Q_{\Gamma_{2l_{2}}}[\tilde{\tilde{\bs{c}}}]\}$ in four different cases.
\\\indent 1) Consider the first case that denominators do not appear in the poisson bracket. Then $Q_{\Gamma_{2l}^{(1)}}[\bs{c}^{(1)}]$ is of the form
\begin{align*}
\sum_{\substack{J'\in\Gamma_{2l_1}\\J''\in\Gamma_{2l_2}}}
\tilde{c}_{J'}\tilde{\tilde{c}}_{J''}
\frac{\{z_{\bs{j}'},z_{\bs{j}''}\}}{\prod\limits_{m=1}^{n_{1}}\Omega^{(4)}_{\bs{h}'_{m}}\prod\limits_{m=n_{1}+1}^{\#\bs{h}'}\Omega^{(6)}_{\bs{h}'_{m}}\prod\limits_{m=1}^{\#\bs{k}'}\Omega^{(6)}_{\bs{k}'_{m}}\prod\limits_{m=1}^{n_{2}}\Omega^{(4)}_{\bs{h}''_{m}}\prod\limits_{m=n_{2}+1}^{\#\bs{h}''}\Omega^{(6)}_{\bs{h}''_{m}}\prod\limits_{m=1}^{\#\bs{k}''}\Omega^{(6)}_{\bs{k}''_{m}}}.
\end{align*}
For any $J\in\Gamma_{2l}^{(1)}$, one has 
\begin{equation}
\label{form7-10-1}
\#\bs{j}=\#\bs{j}'+\#\bs{j}''-2,\quad \bs{h}=(\bs{h}',\bs{h}''),\quad \bs{k}=(\bs{k}',\bs{k}'')\quad\text{and}\quad n=n_1+n_2.
\end{equation}
Hence,
\begin{align}
\label{form7-12-2}
\#\bs{j}-2\#\bs{h}-4\#\bs{k}&=\#\bs{j}'+\#\bs{j}''-2-2(\#\bs{h}'+\#\bs{h}'')-4(\#\bs{k}'+\#\bs{k}'')
\\\notag&=2l_{1}+2l_{2}-2
\\\notag&=2l.
\end{align}
By the facts $\#\bs{j}'-2\geq0$, $\#\bs{j}''-2\geq0$, one has 
$6\leq\#\bs{h}'_{m},\#\bs{k}'_{m}\leq\#\bs{j}'\leq\#\bs{j}$ and $6\leq\#\bs{h}''_{m},\#\bs{k}''_{m}\leq\#\bs{j}''\leq\#\bs{j}$, namely 
$6\leq\#\bs{h}_{m},\#\bs{k}_{m}\leq\#\bs{j}$.
%
\\\indent In view of \eqref{form7-10-1}, if $\bs{j}\in\mc{R}_{2p}$ and $\#\bs{j}'=2p_1$ are fixed, then $\#\bs{j}''=2p-2p_1+2$, and there are at most $2p_1$ different $\bs{j}'$ and $2p-2p_1+2$ different $\bs{j}''$;
if $J'$, $J''$ are fixed, then $\bs{h},\bs{k},n$ are determined. Hence, one has
\begin{align}
\label{form7-12-1}
&\max_{\tilde{\bs{j}}\in\mc{R}_{2p}}\#\{J\in\Gamma_{2l}^{(1)}\mid \bs{j}=\tilde{\bs{j}}\}
\\\notag\leq&\sum_{p_1=1}^{p}2p_1\max_{\tilde{\bs{j}'}\in\mc{R}_{2p_1}}\#\{J'\in\Gamma_{2l_1}\mid \bs{j}'=\tilde{\bs{j}'}\}
\\\notag&\;\qquad(2p-2p_1+2)\max_{\tilde{\bs{j}''}\in\mc{R}_{2p-2p_1+2}}\#\{J''\in\Gamma_{2l_2}\mid \bs{j}''=\tilde{\bs{j}''}\}
\\\notag\leq&\sum_{p_1=1}^{p}(2p_1)^{2p_1-4}(2p-2p_1+2)^{2p-2p_1-2}
\\\notag\leq& p(2p)^{2p-6}.
\end{align}
\indent In order to prove $\Gamma_{2l}^{(1)}\in\mc{H}_{2l}^{N}$, we need prove estimates \eqref{form42} and \eqref{form43}.
Without loss of generality, let $\bs{j}=(j'_{1},\cdots,j'_{k_1-1},j'_{k_1+1},\cdots,j'_{\#\bs{j}'},j''_{1},\cdots,j''_{k_2-1},j''_{k_2+1},\cdots,j''_{\#\bs{j}''})$ and $j'_{k_1}=\bar{j''_{k_2}}$.
By conditions \eqref{form427} and \eqref{form429}, one has
\begin{align*}
\prod_{m=1}^{\#\bs{h}}\mr{w}_{\mu_{\min}(\bs{h}_{m})}^{2}&=\Big(\prod_{m=1}^{\#\bs{h}'}\mr{w}_{\mu_{\min}(\bs{h}'_{m})}^{2}\Big)\Big(\prod_{m=1}^{\#\bs{h}''}\mr{w}_{\mu_{\min}(\bs{h}''_{m})}^{2}\Big)
\\&\leq\frac{\prod_{m=1}^{\#\bs{j}'}\mr{w}_{j'_{m}}}{\mr{w}_{j'^{*}_{1}}^{2}}\frac{\prod_{m=1}^{\#\bs{j}''}\mr{w}_{j''_{m}}}{\mr{w}_{j''^{*}_{1}}^{2}}
\\&\leq\frac{\prod_{m=1}^{\#\bs{j}}\mr{w}_{j_{m}}}{\mr{w}_{j^{*}_{1}}^{2}},
\end{align*}
where the last inequality follows from the facts
$\mr{w}_{j'_{k_1}}=\mr{w}_{j''_{k_2}}\leq\min\{\mr{w}_{j'^{*}_1},\mr{w}_{j''^{*}_1}\}$ and $\mr{w}_{j_1^{*}}\leq\max\{\mr{w}_{j'^{*}_1}, \mr{w}_{j''^{*}_1}\}$.
Without loss of generality, let
$$\max_{m}\{\mr{w}_{\mu_{\min}(\bs{h}_{m})}^{2},\mr{w}_{\mu_{\min}(\bs{k}_{m})}^{2}\}=\max_{m}\{\mr{w}_{\mu_{\min}(\bs{h}''_{m})}^{2},\mr{w}_{\mu_{\min}(\bs{k}''_{m})}^{2}\},$$
and then by \eqref{form427}, \eqref{form430}, one has
\begin{align*}
&\Big(\prod_{m=1}^{\#\bs{h}}\mr{w}_{\mu_{\min}(\bs{h}_{m})}^{2}\Big)\max_{m}\{\mr{w}_{\mu_{\min}(\bs{h}_{m})}^{2},\mr{w}_{\mu_{\min}(\bs{k}_{m})}^{2}\}
\\=&\Big(\prod_{m=1}^{\#\bs{h}'}\mr{w}_{\mu_{\min}(\bs{h}'_{m})}^{2}\Big)\Big(\prod_{m=1}^{\#\bs{h}''}\mr{w}_{\mu_{\min}(\bs{h}''_{m})}^{2}\Big)\max_{m}\{\mr{w}_{\mu_{\min}(\bs{h}''_{m})}^{2},\mr{w}_{\mu_{\min}(\bs{k}''_{m})}^{2}\}
\\\leq&\frac{\prod_{m=1}^{\#\bs{j}'}\mr{w}_{j'_{m}}}{\mr{w}_{j'^{*}_{1}}^{2}}\prod_{m=1}^{\#\bs{j}''}\mr{w}_{j''_{m}}
\\\leq&\prod_{m=1}^{\#\bs{j}}\mr{w}_{j_{m}},
\end{align*}
where the last inequality follows from the fact $\mr{w}_{j'_{k_1}}=\mr{w}_{j''_{k_2}}\leq\mr{w}_{j'^{*}_1}$.
Hence, one has $\Gamma_{2l}^{(1)}\in\mc{H}_{2l}^{N}$ with $l=l_1+l_2-1$. 

Besides, in view of \eqref{form7-10-1}, for any given $J\in\Gamma_{2l}^{(1)}$, if $\#\bs{h}',\#\bs{k}',n_1$ are fixed, then $\#\bs{j}'$, $\#\bs{j}''$, $\bs{h}'$, $\bs{h}''$, $\bs{k}'$, $\bs{k}''$, $n_2$ are determined; and there are at most $\#\bs{j}'$ different $\bs{j}'$ and $\#\bs{j}''$ different $\bs{j}''$. Hence, by $\#\bs{j}',\#\bs{j}'',\#\bs{h}',\#\bs{k}',n_1\lesssim l$, one has
\begin{align}
\label{form7-14-1}
\|\bs{c}^{(1)}\|_{\ell^{\infty}}&=\sup_{J\in\Gamma_{2l}^{(1)}}|c_{J}|
\\\notag&\lesssim l^{5}\sup_{\substack{J'\in\Gamma_{2l_1}\\J''\in\Gamma_{2l_2}}}|\bs{c}_{J'}||\bs{c}_{J''}|
\\\notag&= l^{5}\|\tilde{\bs{c}}\|_{\ell^{\infty}}\|\tilde{\tilde{\bs{c}}}\|_{\ell^{\infty}}.
\end{align}

2)Consider the second case that $\Omega^{(4)}$ appears in the poisson bracket. Without loss of generality, let $n_{1},n_2\geq1$ and then $Q_{\Gamma_{2l}^{(2)}}[\bs{c}^{(2)}]$ is of the form
\begin{align*}
&\sum_{\substack{J'\in\Gamma_{2l_1}\\J''\in\Gamma_{2l_2}}}\sum_{i=1}^{n_1}\tilde{c}_{J'}\tilde{\tilde{c}}_{J''}\frac{z_{\bs{j}'}\{\frac{1}{\Omega^{(4)}_{\bs{h}'_{i}}},z_{\bs{j}''}\}}{\prod\limits_{m\neq i}\Omega^{(4)}_{\bs{h}'_{m}}\prod\limits_{m=n_{1}+1}^{\#\bs{h}'}\Omega^{(6)}_{\bs{h}'_{m}}\prod\limits_{m=1}^{\#\bs{k}'}\Omega^{(6)}_{\bs{k}'_{m}}\prod\limits_{m=1}^{n_{2}}\Omega^{(4)}_{\bs{h}''_{m}}\prod\limits_{m=n_{2}+1}^{\#\bs{h}''}\Omega^{(6)}_{\bs{h}''_{m}}\prod\limits_{m=1}^{\#\bs{k}''}\Omega^{(6)}_{\bs{k}''_{m}}}
\\+&\sum_{\substack{J'\in\Gamma_{2l_1}\\J''\in\Gamma_{2l_2}}}\sum_{i=1}^{n_2}\tilde{c}_{J'}\tilde{\tilde{c}}_{J''}\frac{\{z_{\bs{j}'},\frac{1}{\Omega^{(4)}_{\bs{h}''_{i}}}\}z_{\bs{j}''}}{\prod\limits_{m=1}^{n_1}\Omega^{(4)}_{\bs{h}'_{m}}\prod\limits_{m=n_{1}+1}^{\#\bs{h}'}\Omega^{(6)}_{\bs{h}'_{m}}\prod\limits_{m=1}^{\#\bs{k}'}\Omega^{(6)}_{\bs{k}'_{m}}\prod\limits_{m\neq i}\Omega^{(4)}_{\bs{h}''_{m}}\prod\limits_{m=n_{2}+1}^{\#\bs{h}''}\Omega^{(6)}_{\bs{h}''_{m}}\prod\limits_{m=1}^{\#\bs{k}''}\Omega^{(6)}_{\bs{k}''_{m}}}.
\end{align*}
Then for any $J\in\Gamma_{2l}^{(2)}$, one has 
\begin{equation}
\label{form7-10-2}
\begin{array}{l}
\displaystyle \bs{j}=(\bs{j}',\bs{j}''),
\\\displaystyle \bs{h}=(\bs{h}'_i,\bs{h}',\bs{h}'') \quad \text{or} \quad \bs{h}=(\bs{h}''_i,\bs{h}',\bs{h}''),
\\\displaystyle \bs{k}=(\bs{k}',\bs{k}''),
\\\displaystyle n=n_1+n_2+1. 
\end{array}
\end{equation}
Hence,
\begin{align*}
\#\bs{j}-2\#\bs{h}-4\#\bs{k}&=\#\bs{j}'+\#\bs{j}''-2(\#\bs{h}'+\#\bs{h}''+1)-4(\#\bs{k}'+\#\bs{k}'')
\\&=2l_{1}+2l_{2}-2
\\&=2l,
\end{align*}
and $6\leq\#\bs{h}_{m},\#\bs{k}_{m}\leq\#\bs{j}'+\#\bs{j}''=\#\bs{j}$.
\\\indent In view of \eqref{form7-10-2}, if $\bs{j}\in\mc{R}_{2p}$ and $\#\bs{j}'=2p_1$ are fixed, then $\bs{j}', \bs{j}''$ are determined and $\#\bs{j}''=2p-2p_1$; if $J'$, $J''$ are fixed, $\bs{k},n$ are determined and there are at most $n_1+n_2$ different $\bs{h}$. By the fact $n_1+n_2\leq\#\bs{h}'+\#\bs{h}''<p$, one has
\begin{align}
\label{form7-12-2}
&\max_{\tilde{\bs{j}}\in\mc{R}_{2p}}\#\{J\in\Gamma_{2l}^{(2)}\mid \bs{j}=\tilde{\bs{j}}\}
\\\notag\leq&\sum_{p_1=1}^{p}p\max_{\tilde{\bs{j}'}\in\mc{R}_{2p_1}}\#\{J'\in\Gamma_{2l_1}\mid \bs{j}'=\tilde{\bs{j}'}\}\max_{\tilde{\bs{j}''}\in\mc{R}_{2(p-p_1)}}\#\{J''\in\Gamma_{2l_2}\mid \bs{j}''=\tilde{\bs{j}''}\}
\\\notag\leq&p\sum_{p_1=1}^{p}(2p_{1})^{2p_1-5}(2p-2p_1)^{2p-2p_1-5}
\\\notag\leq&p^{2}(2p)^{2p-10}.
\end{align}
\indent Now, we will prove the estimates \eqref{form42} and \eqref{form43}.  Without loss of generality, consider the case $\bs{h}=(\bs{h}'_i,\bs{h}',\bs{h}'')$. Then $\mu_{\min}(\bs{h}'_{i})\leq j''^{*}_1$ and thus $\mr{w}_{\mu_{\min}(\bs{h}'_{i})}\leq\mr{w}_{j''^{*}_1}$.
\\$\bullet$ When $j_{1}^{*}=j'^{*}_1$, by \eqref{form427}, \eqref{form429}, one has
\begin{align*}
\prod_{m=1}^{\#\bs{h}}\mr{w}_{\mu_{\min}(\bs{h}_{m})}^{2}&=\Big(\prod_{m=1}^{\#\bs{h}'}\mr{w}_{\mu_{\min}(\bs{h}'_{m})}^{2}\Big)\Big(\prod_{m=1}^{\#\bs{h}''}\mr{w}_{\mu_{\min}(\bs{h}''_{m})}^{2}\Big)\mr{w}_{\mu_{\min}(\bs{h}'_{i})}^{2}
\\&\leq\frac{\prod_{m=1}^{\#\bs{j}'}\mr{w}_{j'_{m}}}{\mr{w}_{j'^{*}_{1}}^{2}}\frac{\prod_{m=1}^{\#\bs{j}''}\mr{w}_{j''_{m}}}{\mr{w}_{j''^{*}_{1}}^{2}}\mr{w}_{\mu_{\min}(\bs{h}'_{i})}^{2}
\\&\leq\frac{\prod_{m=1}^{\#\bs{j}}\mr{w}_{j_{m}}}{\mr{w}_{j^{*}_{1}}^{2}};
\end{align*}
$\bullet$ when $j_{1}^{*}=j''^{*}_1$, by \eqref{form428} and \eqref{form429}, one has
\begin{align*}
\prod_{m=1}^{\#\bs{h}}\mr{w}_{\mu_{\min}(\bs{h}_{m})}^{2}&=\mr{w}_{\mu_{\min}(\bs{h}'_{i})}^{2}\Big(\prod_{m=1}^{\#\bs{h}'}\mr{w}_{\mu_{\min}(\bs{h}'_{m})}^{2}\Big)\Big(\prod_{m=1}^{\#\bs{h}''}\mr{w}_{\mu_{\min}(\bs{h}''_{m})}^{2}\Big)
\\&\leq\Big(\prod_{m=1}^{\#\bs{j}'}\mr{w}_{j'_{m}}\Big)\frac{\prod_{m=1}^{\#\bs{j}''}\mr{w}_{j''_{m}}}{\mr{w}_{j''^{*}_{1}}^{2}}
\\&=\frac{\prod_{m=1}^{\#\bs{j}}\mr{w}_{j_{m}}}{\mr{w}_{j^{*}_{1}}^{2}}.
\end{align*}
Hence, the estimate \eqref{form42} holds. Then, we are going to prove the estimate \eqref{form43}.
\\$\bullet$ When $\max_{m}\{\mr{w}_{\mu_{\min}(\bs{h}_{m})}^{2},\mr{w}_{\mu_{\min}(\bs{k}_{m})}^{2}\}=\max_{m}\{\mr{w}_{\mu_{\min}(\bs{h}''_{m})}^{2},\mr{w}_{\mu_{\min}(\bs{k}''_{m})}^{2}\}$, by \eqref{form428} and \eqref{form430}, one has
\begin{align*}
&\Big(\prod_{m=1}^{\#\bs{h}}\mr{w}_{\mu_{\min}(\bs{h}_{m})}^{2}\Big)\max_{m}\{\mr{w}_{\mu_{\min}(\bs{h}_{m})}^{2},\mr{w}_{\mu_{\min}(\bs{k}_{m})}^{2}\}
\\=&\mr{w}_{\mu_{\min}(\bs{h}'_{i})}^{2}\Big(\prod_{m=1}^{\#\bs{h}'}\mr{w}_{\mu_{\min}(\bs{h}'_{m})}^{2}\Big)\Big(\prod_{m=1}^{\#\bs{h}''}\mr{w}_{\mu_{\min}(\bs{h}''_{m})}^{2}\Big)\max_{m}\{\mr{w}_{\mu_{\min}(\bs{h}''_{m})}^{2},\mr{w}_{\mu_{\min}(\bs{k}''_{m})}^{2}\}
\\\leq&\Big(\prod_{m=1}^{\#\bs{j}'}\mr{w}_{j'_{m}}\Big)\Big(\prod_{m=1}^{\#\bs{j}''}\mr{w}_{j''_{m}}\Big)
\\=&\prod_{m=1}^{\#\bs{j}}\mr{w}_{j_{m}};
\end{align*}
$\bullet$ when $\max_{m}\{\mr{w}_{\mu_{\min}(\bs{h}_{m})}^{2},\mr{w}_{\mu_{\min}(\bs{k}_{m})}^{2}\}=\max_{m}\{\mr{w}_{\mu_{\min}(\bs{h}'_{m})}^{2},\mr{w}_{\mu_{\min}(\bs{k}'_{m})}^{2}\}$, by \eqref{form428} and \eqref{form429}, one has
\begin{align*}
&\Big(\prod_{m=1}^{\#\bs{h}}\mr{w}_{\mu_{\min}(\bs{h}_{m})}^{2}\Big)\max_{m}\{\mr{w}_{\mu_{\min}(\bs{h}_{m})}^{2},\mr{w}_{\mu_{\min}(\bs{k}_{m})}^{2}\}
\\=&\Big(\prod_{m=1}^{\#\bs{h}'}\mr{w}_{\mu_{\min}(\bs{h}'_{m})}^{2}\Big)\max_{m}\{\mr{w}_{\mu_{\min}(\bs{h}'_{m})}^{2},\mr{w}_{\mu_{\min}(\bs{k}'_{m})}^{2}\}\Big(\prod_{m=1}^{\#\bs{h}''}\mr{w}_{\mu_{\min}(\bs{h}''_{m})}^{2}\Big)\mr{w}_{\mu_{\min}(\bs{h}'_{i})}^{2}
\\\leq&\Big(\prod_{m=1}^{\#\bs{j}'}\mr{w}_{j'_{m}}\Big)\frac{\prod_{m=1}^{\#\bs{j}''}\mr{w}_{j''_{m}}}{\mr{w}_{j''^{*}_{1}}^{2}}\mr{w}_{\mu_{\min}(\bs{h}'_{i})}^{2}
\\\leq&\prod_{m=1}^{\#\bs{j}}\mr{w}_{j_{m}},
\end{align*}
where the last inequality follows from the fact $\mr{w}_{\mu_{\min}(\bs{h}'_{i})}\leq\mr{w}_{j''^{*}_1}$.
This completes the proof of the estimate \eqref{form43}. Hence, one has $\Gamma_{2l}^{(2)}\in\mc{H}_{2l}^{N}$ with $l=l_1+l_2-1$. 
\\\indent Next, we will estimate the coefficient $\bs{c}^{(2)}$. In view of \eqref{form7-10-2}, for any given $J\in\Gamma_{2l}^{(2)}$, if $\#\bs{j}',\#\bs{k}',n_1$ are fixed, then $J',J''$ are determined.  
%
When $\bs{h}=(\bs{h}'_i,\bs{h}',\bs{h}'')$,  there are at most $\#\bs{j}''$ different $a\in\mb{Z}$ such that 
$$\Big|\{\frac{1}{\Omega^{(4)}_{\bs{h}'_{i}}},z_{\bs{j}''}\}\Big|=|\pa_{I_a}\Omega_{\bs{h}'_i}^{(4)}|\frac{|z_{\bs{j}''}|}{(\Omega^{(4)}_{\bs{h}'_{i}})^{2}}\neq0;$$ 
and similarly, when $\bs{h}=(\bs{h}''_i,\bs{h}',\bs{h}'')$, there are at most $\#\bs{j}'$ different $a\in\mb{Z}$ such that 
$$\Big|\{z_{\bs{j}'},\frac{1}{\Omega^{(4)}_{\bs{h}''_{i}}}\}\Big|=|\pa_{I_a}\Omega_{\bs{h}''_i}^{(4)} |\frac{|z_{\bs{j}'}|}{(\Omega^{(4)}_{\bs{h}''_{i}})^{2}}\neq0.$$
Notice that for any $a\in\mb{Z}$, the coefficients of $\pa_{I_a}\Omega_{\bs{h}'_i}^{(4)}$ and $\pa_{I_a}\Omega_{\bs{h}''_i}^{(4)}$ are respectively  less than $\#\bs{h}_{i}'$ and $\#\bs{h}''_{i}$ up to a constant multiply, i.e., $|\pa_{I_a}\Omega_{\bs{h}'_i}^{(4)}|\lesssim\#\bs{h}'_{i}$ and $|\pa_{I_a}\Omega_{\bs{h}''_i}^{(4)}|\lesssim\#\bs{h}''_{i}$. 
Then by $\#\bs{h}'_{i}<\#\bs{j}'$, $\#\bs{h}''_{i}<\#\bs{j}''$ and $\#\bs{j}',\#\bs{j}'',\#\bs{k}',n_1\lesssim l$, one has
\begin{align}
\label{form7-14-2}
\|\bs{c}^{(2)}\|_{\ell^{\infty}}=&\sup_{J\in\Gamma_{2l}^{(2)}}|c_{J}|
\\\notag\lesssim& l^3\sup_{\substack{J'\in\Gamma_{2l_1}\\J''\in\Gamma_{2l_2}}}|\bs{c}_{J'}||\bs{c}_{J''}|((\#\bs
{j}'')\sup_{\substack{\bs{h}'_{i}\in\bs{h}'\\a\in\mb{Z}}}|\pa_{I_a}\Omega^{(4)}_{\bs{h}'_{i}}|+(\#\bs{j}')\sup_{\substack{\bs{h}''_{i}\in\bs{h}''\\a\in\mb{Z}}}|\pa_{I_a}\Omega^{(4)}_{\bs{h}''_{i}}|)
\\\notag\lesssim&l^5\sup_{\substack{J'\in\Gamma_{2l_1}\\J''\in\Gamma_{2l_2}}}|\bs{c}_{J'}||\bs{c}_{J''}|
\\\notag=& l^{5}\|\tilde{\bs{c}}\|_{\ell^{\infty}}\|\tilde{\tilde{\bs{c}}}\|_{\ell^{\infty}}.
\end{align}
\indent 3) Consider the third case that $\Omega^{(6)}_{\bs{h}}$ appears in the poisson bracket. Then $Q_{\Gamma_{2l}^{(3)}}[\bs{c}^{(3)}]$ is of the form
\begin{align*}
&\sum_{\substack{J'\in\Gamma_{2l_1}\\J''\in\Gamma_{2l_2}}}\sum_{i=n_1+1}^{\#\bs{h}'}\tilde{c}_{J'}\tilde{\tilde{c}}_{J''}\frac{z_{\bs{j}'}\{\frac{1}{\Omega^{(6)}_{\bs{h}'_{i}}(I)},z_{\bs{j}''}\}}{\prod\limits_{m=1}^{n_1}\Omega^{(4)}_{\bs{h}'_{m}}\prod\limits_{m\neq i}\Omega^{(6)}_{\bs{h}'_{m}}\prod\limits_{m=1}^{\#\bs{k}'}\Omega^{(6)}_{\bs{k}'_{m}}\prod\limits_{m=1}^{n_2}\Omega^{(4)}_{\bs{h}''_{m}}\prod\limits_{m=n_2+1}^{\#\bs{h}''}\Omega^{(6)}_{\bs{h}''_{m}}\prod\limits_{m=1}^{\#\bs{k}''}\Omega^{(6)}_{\bs{k}''_{m}}}
\\+&\sum_{\substack{J'\in\Gamma_{2l_1}\\J''\in\Gamma_{2l_2}}}\sum_{i=n_1+1}^{\#\bs{h}''}\tilde{c}_{J'}\tilde{\tilde{c}}_{J''}\frac{\{z_{\bs{j}'},\frac{1}{\Omega^{(6)}_{\bs{h}''_{i}}(I)}\}z_{\bs{j}''}}{\prod\limits_{m=1}^{n_1}\Omega^{(4)}_{\bs{h}'_{m}}\prod\limits_{m=n_1+1}^{\bs{h}'}\Omega^{(6)}_{\bs{h}'_{m}}\prod\limits_{m=1}^{\#\bs{k}'}\Omega^{(6)}_{\bs{k}'_{m}}\prod\limits_{m=1}^{n_2}\Omega^{(4)}_{\bs{h}''_{m}}\prod\limits_{m\neq i}\Omega^{(6)}_{\bs{h}''_{m}}\prod\limits_{m=1}^{\#\bs{k}''}\Omega^{(6)}_{\bs{k}''_{m}}}.
\end{align*}
We divide $Q_{\Gamma_{2l}^{(3)}}[\bs{c}^{(3)}]$ into the following two parts:
\\ $\bullet$ One part $Q_{\Gamma_{2l}^{(3,1)}}[\bs{c}^{(3,1)}]$, where $\Gamma_{2l}^{(3,1)}$ and $[\bs{c}^{(3,1)}]$ are determined by applying the derivate of the linear part of $\Omega_{\bs{h}}^{(6)}$. 
Then for any $J\in\Gamma_{2l}^{(3,1)}$, one has
\begin{equation}
\label{form7-10-3}
\begin{array}{l}
\displaystyle \bs{j}=(\bs{j}',\bs{j}''), 
\\\displaystyle \bs{h}=(\bs{h}'_i,\bs{h}',\bs{h}'') \quad \text{or} \quad \bs{h}=(\bs{h}''_i,\bs{h}',\bs{h}''),
\\\displaystyle \bs{k}=(\bs{k}',\bs{k}'')
 \\\displaystyle n=n_1+n_2. 
\end{array}
\end{equation}
It is similar with the case 2).
\\$\bullet$ The other part $Q_{\Gamma_{2l}^{(3,2)}}[\bs{c}^{(3,2)}]$, where $\Gamma_{2l}^{(3,2)}$ and $[\bs{c}^{(3,2)}]$ are determined by applying the derivate of the quadratic part of $\Omega_{\bs{h}}^{(6)}$. Then for any $J\in\Gamma_{2l}^{(3,2)}$, one has 
\begin{equation}
\label{form7-10-4}
\begin{array}{l}
\displaystyle \bs{j}=(\bs{j}',\bs{j}'',j,\bar{j})\quad \text{with} \quad j\in\mb{U}_{2}\times\mb{Z},
\\\displaystyle\bs{h}=(\bs{h}',\bs{h}''),
\\\displaystyle \bs{k}=(\bs{h}'_i,\bs{k}',\bs{k}'') \quad \text{or} \quad \bs{k}=(\bs{h}''_i,\bs{k}',\bs{k}''),
\\\displaystyle n=n_1+n_2.
\end{array}
\end{equation}
%
Hence,
\begin{align*}
\#\bs{j}-2\#\bs{h}-4\#\bs{k}=&\#\bs{j}'+\#\bs{j}''+2-2(\#\bs{h}'+\#\bs{h}'')-4(\#\bs{k}'+\#\bs{k}''+1)
\\=&2l_{1}+2l_{2}-2
\\=&2l,
\end{align*}
 and $6\leq\#\bs{h}_{m},\#\bs{k}_{m}\leq\#\bs{j}'+\#\bs{j}''\leq\#\bs{j}$. 
\\\indent By conditions \eqref{form427}, \eqref{form429}, one has
\begin{align*}
\prod_{m=1}^{\#\bs{h}}\mr{w}_{\mu_{\min}(\bs{h}_{m})}^{2}&=\Big(\prod_{m=1}^{\#\bs{h}'}\mr{w}_{\mu_{\min}(\bs{h}'_{m})}^{2}\Big)\Big(\prod_{m=1}^{\#\bs{h}''}\mr{w}_{\mu_{\min}(\bs{h}''_{m})}^{2}\Big)
\\&\leq\frac{\prod_{m=1}^{\#\bs{j}'}\mr{w}_{j'_{m}}}{\mr{w}_{j'^{*}_{1}}^{2}}\frac{\prod_{m=1}^{\#\bs{j}''}\mr{w}_{j''_{m}}}{\mr{w}_{j''^{*}_{1}}^{2}}
\\&\leq\frac{\prod_{m=1}^{\#\bs{j}}\mr{w}_{j_{m}}}{\mr{w}_{j^{*}_{1}}^{2}},
\end{align*}
and by conditions \eqref{form428}, \eqref{form430}, one has
\begin{align*}
&\Big(\prod_{m=1}^{\#\bs{h}}\mr{w}_{\mu_{\min}(\bs{h}_{m})}^{2}\Big)\max_{m}\{\mr{w}_{\mu_{\min}(\bs{h}_{m})}^{2},\mr{w}_{\mu_{\min}(\bs{k}_{m})}^{2}\}
\\\leq&\Big(\prod_{m=1}^{\#\bs{h}'}\mr{w}_{\mu_{\min}(\bs{h}'_{m})}^{2}\Big)\max_{m}\{\mr{w}_{\mu_{\min}(\bs{h}'_{m})}^{2},\mr{w}_{\mu_{\min}(\bs{k}'_{m})}^{2}\}\Big(\prod_{m=1}^{\#\bs{h}''}\mr{w}_{\mu_{\min}(\bs{h}''_{m})}^{2}\Big)\max_{m}\{\mr{w}_{\mu_{\min}(\bs{h}''_{m})}^{2},\mr{w}_{\mu_{\min}(\bs{k}''_{m})}^{2}\}
\\\leq&\Big(\prod_{m=1}^{\#\bs{j}'}\mr{w}_{j'_{m}}\Big)\Big(\prod_{m=1}^{\#\bs{j}''}\mr{w}_{j''_{m}}\Big)
\\\leq&\prod_{m=1}^{\#\bs{j}}\mr{w}_{j_{m}}.
\end{align*}
This completes the proof of the estimates \eqref{form42}, \eqref{form43}. 
\\\indent In view of \eqref{form7-10-4},  if $\bs{j}\in\mc{R}_{2p}$ and $\#\bs{j}'=2p_1$ are fixed, then $\bs{j}'$, $\bs{j}''$ are determined and  $\#\bs{j}''=2p-2p_1-2$; if $J'$, $J''$ are fixed, then $\bs{h},n$ are determined, and there are at most $(\#\bs{h}'+\#\bs{h}'')$ different $\bs{k}$.  By the fact $\#\bs{h}'+\#\bs{h}''<p$, one has
\begin{align}
\label{form7-12-3}
&\max_{\tilde{\bs{j}}\in\mc{R}_{2p}}\#\{J\in\Gamma_{2l}^{(3,2)}\mid \bs{j}=\tilde{\bs{j}}\}
\\\notag\leq&\sum_{p_1=1}^{p}p\max_{\tilde{\bs{j}'}\in\mc{R}_{2p_1}}\#\{J'\in\Gamma_{2l_1}\mid \bs{j}'=\tilde{\bs{j}'}\}
\max_{\tilde{\bs{j}''}\in\mc{R}_{2(p-p_1-1)}}\#\{J''\in\Gamma_{2l_2}\mid \bs{j}''=\tilde{\bs{j}''}\}
\\\notag\leq&p^{2}(2p_{1})^{2p_1-5}(2p-2p_1-2)^{2p-2p_1-7}
\\\notag\leq&p^{2}(2p)^{2p-12}.
\end{align}
\indent Next, we will estimate the coefficient $\bs{c}^{(3,2)}$. In view of \eqref{form7-10-4}, for any given $J\in\Gamma_{2l}^{(3,2)}$,  if $\#\bs{j}',\#\bs{h}',n_1$ are fixed, $J',J''$ are determined. 
When $\bs{k}=(\bs{h}'_i,\bs{k}',\bs{k}'')$,  there are at most $\#\bs{j}''$ different $a\in\mb{Z}$ such that 
$$\Big|\{\frac{1}{\Omega^{(6)}_{\bs{h}'_{i}}},z_{\bs{j}''}\}\Big|=|\pa_{I_a}\Omega_{\bs{h}'_i}^{(6)}|\frac{|z_{\bs{j}''}|}{(\Omega^{(6)}_{\bs{h}'_{i}})^{2}}\neq0;$$ 
and similarly, when $\bs{k}=(\bs{h}''_i,\bs{k}',\bs{k}'')$, there are at most $\#\bs{j}'$ different $a\in\mb{Z}$ such that 
$$\Big|\{z_{\bs{j}'},\frac{1}{\Omega^{(6)}_{\bs{h}''_{i}}}\}\Big|=|\pa_{I_a}\Omega_{\bs{h}''_i}^{(6)}|\frac{|z_{\bs{j}'}|}{(\Omega^{(6)}_{\bs{h}''_{i}})^{2}}\neq0.$$
Notice that for any $a\in\mb{Z}$, the coefficients of $\pa_{I_a}\Omega_{\bs{h}'_i}^{(6)}$ and $\pa_{I_a}\Omega_{\bs{h}''_i}^{(6)}$ are respectively  less than $\#\bs{h}_{i}'$ and $\#\bs{h}''_{i}$ up to a constant multiply. 
Then by $\#\bs{h}'_{i}<\#\bs{j}'$, $\#\bs{h}''_{i}<\#\bs{j}''$ and $\#\bs{j}',\#\bs{j}'',\#\bs{h}',n_1\lesssim l$, one has
\begin{align}
\label{form7-14-3}
\|\bs{c}^{(3,2)}\|_{\ell^{\infty}}&=\sup_{J\in\Gamma_{2l}^{(3,2)}}|c_{J}|
\\\notag&\lesssim l^{3}\sup_{\substack{J'\in\Gamma_{2l_1}\\J''\in\Gamma_{2l_2}}}(\#\bs{j}')(\#\bs{j}'')|\bs{c}_{J'}||\bs{c}_{J''}|
\\\notag&\lesssim l^{5}\|\tilde{\bs{c}}\|_{\ell^{\infty}}\|\tilde{\tilde{\bs{c}}}\|_{\ell^{\infty}}.
\end{align}
\indent 4) Consider the fourth case that $\Omega^{(6)}_{\bs{k}}$ appears in the poisson bracket. Then 
\\$Q_{\Gamma_{2l}^{(4)}}[\bs{c}^{(4)}]$ is of the form
\begin{align*}
&\sum_{\substack{J'\in\Gamma_{2l_1}\\J''\in\Gamma_{2l_2}}}\sum_{i=2}^{\#\bs{k}'}\tilde{c}_{J'}\tilde{\tilde{c}}_{J''}\frac{z_{\bs{j}'}\{\frac{1}{\Omega^{(6)}_{\bs{k}'_{i}}(I)},z_{\bs{j}''}\}}{\prod\limits_{m=1}^{n_1}\Omega^{(4)}_{\bs{h}'_{m}}\prod\limits_{m=n_1+1}^{\#\bs{h}'}\Omega^{(6)}_{\bs{h}'_{m}}\prod\limits_{m\neq i}\Omega^{(6)}_{\bs{k}'_{m}}\prod\limits_{m=1}^{n_2}\Omega^{(4)}_{\bs{h}''_{m}}\prod\limits_{m=n_2+1}^{\#\bs{h}''}\Omega^{(6)}_{\bs{h}''_{m}}\prod\limits_{m=1}^{\#\bs{k}''}\Omega^{(6)}_{\bs{k}''_{m}}}
\\+&\sum_{\substack{J'\in\Gamma_{2l_1}\\J''\in\Gamma_{2l_2}}}\sum_{i=2}^{\#\bs{k}''}\tilde{c}_{J'}\tilde{\tilde{c}}_{J''}\frac{\{z_{\bs{j}'},\frac{1}{\Omega^{(6)}_{\bs{k}''_{i}}(I)}\}z_{\bs{j}''}}{\prod\limits_{m=1}^{n_1}\Omega^{(4)}_{\bs{h}'_{m}}\prod\limits_{m=n_1+1}^{\#\bs{h}'}\Omega^{(6)}_{\bs{h}'_{m}}\prod\limits_{m=1}^{\#\bs{k}'}\Omega^{(6)}_{\bs{k}'_{m}}\prod\limits_{m=1}^{n_2}\Omega^{(4)}_{\bs{h}''_{m}}\prod\limits_{m=n_2+1}^{\#\bs{h}''}\Omega^{(6)}_{\bs{h}''_{m}}\prod\limits_{m\neq i}\Omega^{(6)}_{\bs{k}''_{m}}}.
\end{align*}
We divide $Q_{\Gamma_{2l}^{(4)}}[\bs{c}^{(4)}]$ into the following two parts:
\\ $\bullet$ One part $Q_{\Gamma_{2l}^{(4,1)}}[\bs{c}^{(4,1)}]$, where $\Gamma_{2l}^{(4,1)}$ and $[\bs{c}^{(4,1)}]$ are determined by applying the derivate of the linear part of $\Omega_{\bs{k}}^{(6)}$. 
Then for any $J\in\Gamma_{2l}^{(4,1)}$, one has
\begin{equation}
\label{form7-10-5}
\begin{array}{l}
\displaystyle \bs{j}=(\bs{j}',\bs{j}''), 
\\\displaystyle \bs{h}=(\bs{k}'_i,\bs{h}',\bs{h}'') \quad \text{or} \quad \bs{h}=(\bs{k}''_i,\bs{h}',\bs{h}''),
\\\displaystyle \bs{k}=(\bs{k}',\bs{k}'')
 \\\displaystyle n=n_1+n_2. 
\end{array}
\end{equation}
\\$\bullet$ The other part $Q_{\Gamma_{2l}^{(4,2)}}[\bs{c}^{(4,2)}]$, where $\Gamma_{2l}^{(4,2)}$ and $[\bs{c}^{(4,2)}]$ are determined by applying the derivate of the quadratic part of $\Omega_{\bs{k}}^{(6)}$. Then for any $J\in\Gamma_{2l}^{(4,2)}$, one has 
\begin{equation}
\label{form7-10-6}
\begin{array}{l}
\displaystyle \bs{j}=(\bs{j}',\bs{j}'',j,\bar{j})\quad \text{with} \quad j\in\mb{U}_{2}\times\mb{Z},
\\\displaystyle\bs{h}=(\bs{h}',\bs{h}''),
\\\displaystyle \bs{k}=(\bs{k}'_i,\bs{k}',\bs{k}'') \quad \text{or} \quad \bs{k}=(\bs{k}''_i,\bs{k}',\bs{k}''),
\\\displaystyle n=n_1+n_2.
\end{array}
\end{equation}
The proof is parallel to the case 3) with $\bs{k}'_i$ and $\bs{k}''_i$ instead of $\bs{h}'_i$ and $\bs{h}''_i$, respectively.
Then the details are omitted.
\\\indent To sum up, letting $\Gamma_{2l}=\Gamma_{2l}^{(1)}\cup\Gamma_{2l}^{(2)}\cup\Gamma_{2l}^{(3)}\cup\Gamma_{2l}^{(4)}$, then \eqref{form42} and \eqref{form43} hold for any $J\in\Gamma_{2l}$. By \eqref{form7-12-1}, \eqref{form7-12-2} and \eqref{form7-12-3}, we conclude that 
$\max_{\tilde{\bs{j}}\in\mc{R}_{2p}}\#\{(\bs{j},\bs{h},\bs{k},n)\in\Gamma_{2l}\mid \bs{j}=\tilde{\bs{j}}\}
<(2p)^{2p-5}$.
Hence, $\Gamma_{2l}\in\mc{H}^{N}_{2l}$ with $l=l_1+l_2-1$.
Moreover, the coefficient estimate \eqref{form424} follows from \eqref{form7-14-1}, \eqref{form7-14-2} and \eqref{form7-14-3}.
\begin{remark}
\label{re7-12}
By the above proof, we can draw the conclusion that for any $(\bs{j},\bs{h},\bs{k},n)\in\Gamma_{2l}$, one has
\begin{equation}
\label{form425}
\#\bs{h}\leq\max_{J'\in\Gamma_{2l_1}}\{\#\bs{h}'\}+\max_{J''\in\Gamma_{2l_2}}\{\#\bs{h}''\}+1,
\end{equation}
\begin{equation}
\label{form425'}
\#\bs{k}\leq\max_{J'\in\Gamma_{2l_1}}\{\#\bs{k}'\}+\max_{J''\in\Gamma_{2l_2}}\{\#\bs{k}''\}+1,
\end{equation}
\begin{equation}
\label{form426}
\#\bs{h}+\#\bs{k}\leq\max_{J'\in\Gamma_{2l_1}}\{\#\bs{h}'+\#\bs{k}'\}+\max_{J''\in\Gamma_{2l_2}}\{\#\bs{h}''+\#\bs{k}''\}+1,
\end{equation}
with $J':=(\bs{j}',\bs{h}',\bs{k}',n_1)$ and $J'':=(\bs{j}'',\bs{h}'',\bs{k}'',n_2)$.
\end{remark}

\subsection{Proof of \Cref{le41}}
\label{subsec32}
Write $Q_{\Gamma_{2l}}[\bs{c}](z)=\sum_{(p,q,q')\in\mc{F}_{2l}}Q^{(p,q,q')}[\bs{c}](z)$ with
$$Q^{(p,q,q')}[\bs{c}](z)=\sum_{J:=(\bs{j},\bs{h},\bs{k},n)\in\Gamma_{2l}^{(p,q,q')}}c_{J}f_{J}(I)z_{\bs{j}},$$
where
\begin{equation}
\label{form47}
f_{J}(I)=\frac{1}{\prod\limits_{m=1}^{n}\Omega^{(4)}_{\bs{h}_{m}}(I)\prod\limits_{m=n+1}^{q}\Omega^{(6)}_{\bs{h}_{m}}(I)\prod\limits_{m=1}^{q'}\Omega^{(6)}_{\bs{k}_{m}}(I)}.
\end{equation}
Notice that $\mc{F}_{2l}\subseteq\mb{N}^{3}$ is a finite set with $\#\mc{F}_{2l}\lesssim l^{3}$, and hence
\begin{equation}
\label{form48}
|X_{Q_{\Gamma_{2l}}[\bs{c}]}(z)|_{\mr{w}}\lesssim l^{3}\max_{(p,q,q')\in\mc{F}_{2l}}|X_{Q^{(p,q,q')}[\bs{c}]}(z)|_{\mr{w}}.
\end{equation}
For any given $j=(\delta,a)$, let
\begin{equation}
\label{form49}
\big|(X_{Q^{(p,q,q')}})_{j}\big|=\left|\frac{\pa Q^{(p,q,q')}}{\pa z_{j}}\right|
\leq X^{(1)}_{j}+X^{(2)}_{j}
\end{equation}
with
\begin{align*}
X^{(1)}_{j}&:=\sum_{J\in\Gamma_{2l}^{(p,q,q')}}|c_{J}|\left|\pa_{I_a} f_{J}(I)\right||z_{\bar{j}}||z_{\bs{j}}|,
\\X^{(2)}_{j}&:=\sum_{J\in\Gamma_{2l}^{(p,q,q')}}|c_{J}||f_{J}(I)|\left|\frac{\pa z_{\bs{j}}}{\pa z_{j}}\right|.
\end{align*}
Then one has
\begin{equation}
\label{form410}
|X_{Q^{(p,q,q')}}(z)|_{\mr{w}}\leq|X^{(1)}(z)|_{\mr{w}}+|X^{(2)}(z)|_{\mr{w}}.
\end{equation}
\indent 1) Estimation of $|X^{(1)}(z)|_{\mr{w}}$. Obviously, one has
\begin{equation}
\label{form411}
|X^{(1)}(z)|_{\mr{w}}=\big(\sum_{j\in\mb{U}_2\times\mb{Z}}\mr{w}_j^2|X_{j}^{(1)}|^{2}\big)^{\frac{1}{2}}\leq S|z|_{\mr{w}}
\end{equation}
with
\begin{equation}
\label{form412}
S:=\sup_{a\in\mb{Z}}\big(\sum_{J\in\Gamma_{2l}^{(p,q,q')}}|c_{J}||\pa_{I_{a}}f_{J}(I)||z_{\bs{j}}|\big).
\end{equation}
Now we are going to estimate $S$. One has
\begin{equation}
\label{form413}
\pa_{I_{a}}f_{J}(I)=-\big(\sum_{m=1}^{n}\frac{\pa_{I_{a}}\Omega^{(4)}_{\bs{h}_{m}}}{\Omega^{(4)}_{\bs{h}_{m}}}+\sum_{m=n+1}^{q}\frac{\pa_{I_{a}}\Omega^{(6)}_{\bs{h}_{m}}}{\Omega^{(6)}_{\bs{h}_{m}}}+\sum_{m=1}^{q'}\frac{\pa_{I_{a}}\Omega^{(6)}_{\bs{k}_{m}}}{\Omega^{(6)}_{\bs{k}_{m}}}\big)f_{J}(I).
\end{equation}
Notice that $\#\bs{h}_{m},\#\bs{k}_{m}\leq 2p$, and then by the assumptions of small divisors, one has
\begin{equation}
\label{form414}
|\pa_{I_{a}}\Omega^{(4)}_{\bs{h}_{m}}|, |\pa_{I_{a}}\Omega^{(6)}_{\bs{h}_{m}}|, |\pa_{I_{a}}\Omega^{(6)}_{\bs{k}_{m}}|\lesssim p.
\end{equation}
By small divisor conditions \eqref{form7-6-1} and \eqref{form7-6-2}, one has
\begin{equation}
\label{form415}
|\Omega^{(4)}_{\bs{h}_{m}}|,|\Omega^{(6)}_{\bs{h}_{m}}|\geq\gamma(2N)^{-12p}|z|_{\mr{w}}^{2}\mr{w}_{\mu_{\min}(\bs{h}_m)}^{-2},
\end{equation}
\begin{equation}
\label{form416}
|\Omega^{(6)}_{\bs{k}_{m}}|\geq\gamma(2N)^{-12p}|z|_{\mr{w}}^{2}\mr{w}_{\mu_{\min}(\bs{k}_m)}^{-2},
\end{equation}
\begin{equation}
\label{form416'}
|\Omega^{(6)}_{\bs{k}_{m}}|\geq\gamma^{2}(2N)^{-12p}|z|_{\mr{w}}^{4}.
\end{equation}
Hence, by \eqref{form414}--\eqref{form416} and the fact $q+q'<p$, one has
\begin{align}
\label{form417'}
&\Big|\sum_{m=1}^{n}\frac{\pa_{I_{a}}\Omega^{(4)}_{\bs{h}_{m}}}{\Omega^{(4)}_{\bs{h}_{m}}}+\sum_{m=n+1}^{q}\frac{\pa_{I_{a}}\Omega^{(6)}_{\bs{h}_{m}}}{\Omega^{(6)}_{\bs{h}_{m}}}+\sum_{m=1}^{q'}\frac{\pa_{I_{a}}\Omega^{(6)}_{\bs{k}_{m}}}{\Omega^{(6)}_{\bs{k}_{m}}}\Big|
\\\notag\lesssim&p^{2}\max_{m}\{\mr{w}_{\mu_{\min}(\bs{h}_{m})}^{2},\mr{w}_{\mu_{\min}(\bs{k}_{m})}^{2}\}\frac{(2N)^{12p}}{\gamma|z|_{\mr{w}}^{2}};
\end{align}
by \eqref{form415} and \eqref{form416'}, one has
\begin{equation}
\label{form417}
|f_{J}(I)|\leq\Big(\prod_{m=1}^{q}\mr{w}_{\mu_{\min}(\bs{h}_{m})}^{2}\Big)\Big(\frac{(2N)^{12p}}{\gamma|z|_{\mr{w}}^{2}}\Big)^{q}\Big(\frac{(2N)^{12p}}{\gamma^{2}|z|_{\mr{w}}^{4}}\Big)^{q'}.
\end{equation}
By \eqref{form413}, \eqref{form417'},\eqref{form417} and the condition \eqref{form43}, one has
\begin{align}
\label{form418}
|\pa_{I_{a}}f_{J}(I)|&\lesssim p^{2}\frac{(2N)^{12p(q+q')+12p}}{\gamma^{q+2q'+1}|z|_{\mr{w}}^{2q+4q'+2}}\max_{m}\{\mr{w}_{\mu_{\min}(\bs{h}_{m})}^{2},\mr{w}_{\mu_{\min}(\bs{k}_{m})}^{2}\}\prod_{m=1}^{q}\mr{w}_{\mu_{\min}(\bs{h}_{m})}^{2}
\\\notag&\leq p^{2}\frac{(2N)^{12p(q+q')+12p}}{\gamma^{q+2q'+1}|z|_{\mr{w}}^{2q+4q'+2}}\prod_{m=1}^{2p}\mr{w}_{j_{m}}.
\end{align}
%
%
Hence, by  \eqref{form412}, \eqref{form418} and the condition \eqref{def41-2}  in \Cref{def21}, one has
\begin{align}
\label{form7-14-1'''}
S&\lesssim p^{2}\frac{(2N)^{12p(q+q')+12p}}{\gamma^{q+2q'+1}|z|_{\mr{w}}^{2q+4q'+2}}\sum_{J\in\Gamma_{2l}^{(p,q,q')}}|c_{J}|\Big(\prod_{m=1}^{2p}\mr{w}_{j_{m}}\Big)|z_{\bs{j}}|
\\\notag&\leq (2p)^{2p-5}p^{2}\|\bs{c}\|_{\ell^{\infty}}\frac{(2N)^{12p(q+q')+12p}}{\gamma^{q+2q'+1}|z|_{\mr{w}}^{2q+4q'+2}}\sum_{\substack{\bs{j}\in\mc{R}_{2p}\\(Irr(\bs{j}))_{1}^{*}\leq N}}\Big(\prod_{m=1}^{2p}\mr{w}_{j_{m}}\Big)|z_{\bs{j}}|.
\end{align}
By the definition of norm and the fact $4N+2\leq(2N)^{3}$, one has
\begin{equation}
\label{form7-14-2'''}
\sum_{|j|\leq N}\mr{w}_{j}|z_j|\leq\sqrt{4N+2}|z|_{\mr{w}}\leq(2N)^{\frac{3}{2}}|z|_{\mr{w}},
\end{equation}
and thus
\begin{equation}
\label{form419'}
\sum_{\substack{\bs{j}\in\mc{R}_{2p}\\(Irr(\bs{j}))_{1}^{*}\leq N}}\Big(\prod_{m=1}^{2p}\mr{w}_{j_{m}}\Big)|z_{\bs{j}}|\leq(2N)^{3p}|z|_{\mr{w}}^{2p}.
\end{equation}
Then by \eqref{form7-14-1'''}, \eqref{form419'} and the fact $p-q-2q'=l$, one has
\begin{align}
\label{form419}
S\lesssim(2p)^{2p-3}\|\bs{c}\|_{\ell^{\infty}}\frac{(2N)^{12p(q+q')+15p}}{\gamma^{q+2q'+1}}|z|_{\mr{w}}^{2l-2}.
\end{align}
By \eqref{form411} and \eqref{form419}, one has
\begin{equation}
\label{form420}
|X^{(1)}(z)|_{\mr{w}}\lesssim (2p)^{2p-3}\|\bs{c}\|_{\ell^{\infty}}\frac{(2N)^{12p(q+q')+15p}}{\gamma^{q+2q'+1}}|z|_{\mr{w}}^{2l-1}.
\end{equation}
\indent 2) Estimation of $|X^{(2)}(z)|_{\mr{w}}$. By \eqref{form417} and  the condition \eqref{form42}, one has
\begin{equation}
\label{form421}
|f_{J}(I)|\leq\frac{\prod_{m=1}^{2p}\mr{w}_{j_{m}}}{\mr{w}_{j^{*}_{1}}^{2}}\big(\frac{(2N)^{12p}}{\gamma|z|_{\mr{w}}^{2}}\big)^{q}\big(\frac{(2N)^{12p}}{\gamma^{2}|z|_{\mr{w}}^{4}}\big)^{q'}.
\end{equation}
Then by \eqref{form421} and the condition \eqref{def41-2}  in \Cref{def21}, one has
\begin{align}
\label{form340'}
|X^{(2)}(z)|_{\mr{w}}&\leq\Big(\sum_{j_{1}} \mr{w}_{j_1}^{2}\big|2p\sum_{J\in\Gamma_{2l}^{(p,q,q')}}|c_{J}||f_{J}(I)||z_{j_{2}}|\cdots|z_{j_{2p_{i}}}|\Big|^{2}\Big)^{\frac{1}{2}}
\\\notag&\leq \Big(\sum_{j_{1}}\Big|2p\sum_{J\in\Gamma_{2l}^{(p,q,q')}}|c_{J}|\frac{(2N)^{12p(q+q')}}{\gamma^{q+2q'}|z|_{\mr{w}}^{2q+4q'}}\prod_{m=2}^{2p}(\mr{w}_{j_{m}}|z_{j_{m}}|)\Big|^{2}\Big)^{\frac{1}{2}}
\\\notag&\lesssim\|\bs{c}\|_{\ell^{\infty}}\Big(\sum_{j_{1}}\Big|(2p)^{2p-4}\sum_{\substack{j_{2},\cdots,j_{2p}\\\bs{j}\in\mc{R}_{2p},(Irr(\bs{j}))_{1}^{*}\leq N}}\frac{(2N)^{12p(q+q')}}{\gamma^{q+2q'}|z|_{\mr{w}}^{2q+4q'}}\prod_{m=2}^{2p}(\mr{w}_{j_{m}}|z_{j_{m}}|)\Big|^{2}\Big)^{\frac{1}{2}}
\\\notag&=(2p)^{2p-4}\|\bs{c}\|_{\ell^{\infty}}\frac{(2N)^{12p(q+q')}}{\gamma^{q+2q'}|z|_{\mr{w}}^{2q+4q'}}\Big(\sum_{j_1}\Big|\sum_{\substack{j_{2},\cdots,j_{2p}\\\bs{j}\in\mc{R}_{2p},(Irr(\bs{j}))_{1}^{*}\leq N}}\prod_{m=2}^{2p}(\mr{w}_{j_{m}}|z_{j_{m}}|)\Big|^{2}\Big)^{\frac{1}{2}}
\end{align}
Divide the second sum above into the two parts according to $\{j_2,\cdots,j_{2p}\}\ni\bar{j}_1$ or not. 
%
%
When $\{j_2,\cdots,j_{2p}\}\ni\bar{j}_1$,  by \eqref{form7-14-2'''}, one has
\begin{align}
\label{form7-15-1}
&\Big(\sum_{j_1}\Big|\sum_{\substack{\{j_{2},\cdots,j_{2p}\}\ni\bar{j}_1\\\bs{j}\in\mc{R}_{2p},(Irr(\bs{j}))_{1}^{*}\leq N}}\prod_{m=2}^{2p}(\mr{w}_{j_{m}}|z_{j_{m}}|)\Big|^{2}\Big)^{\frac{1}{2}}
\\\notag\leq&\Big(\sum_{j_1}\Big|2p\mr{w}_{\bar{j}_{1}}|z_{\bar{j}_1}|\sum_{\substack{\bs{j}'=(j_{3},\cdots,j_{2p})\in\mc{R}_{2p-2}\\(Irr(\bs{j}'))_{1}^{*}\leq N}}\prod_{m=3}^{2p}(\mr{w}_{j_{m}}|z_{j_{m}}|)\Big|^{2}\Big)^{\frac{1}{2}}
\\\notag\leq&2p\Big(\sum_{j_1}\mr{w}_{j_{1}}^{2}|z_{j_1}|^{2}\Big)^{\frac{1}{2}}\sum_{\substack{\bs{j}'=(j_{3},\cdots,j_{2p})\in\mc{R}_{2p-2}\\(Irr(\bs{j}'))_{1}^{*}\leq N}}\prod_{m=3}^{2p}(\mr{w}_{j_{m}}|z_{j_{m}}|)
\\\notag\leq&2p(2N)^{3(p-1)}|z|_{\mr{w}}^{2p-1}.
\end{align}
When $\{j_2,\cdots,j_{2p}\}\not\ni\bar{j}_1$, by $\bs{j}\in\mc{R}_{2p}$ and $(Irr(\bs{j}))_{1}^{*}\leq N$, one has $|j_1|\leq N$. Then by \eqref{form7-14-2'''}   and the fact $4N+2\leq(2N)^{3}$, one has
\begin{align}
\label{form7-15-2}
&\Big(\sum_{|j_1|\leq N}\Big|\sum_{\substack{j_{2},\cdots,j_{2p}\neq \bar{j}_1\\\bs{j}\in\mc{R}_{2p},(Irr(\bs{j}))_{1}^{*}\leq N}}\prod_{m=2}^{2p}(\mr{w}_{j_{m}}|z_{j_{m}}|)\Big|^{2}\Big)^{\frac{1}{2}}
\\\notag\leq&(4N+2)^{\frac{1}{2}}\sup_{|j_1|\leq N}\sum_{\substack{j_{2},\cdots,j_{2p}\neq \bar{j}_1\\\bs{j}\in\mc{R}_{2p},(Irr(\bs{j}))_{1}^{*}\leq N}}\prod_{m=2}^{2p}(\mr{w}_{j_{m}}|z_{j_{m}}|)
\\\notag\leq&(2N)^{3p}|z|_{\mr{w}}^{2p-1}.
\end{align}
Combing \eqref{form7-15-1} and \eqref{form7-15-2}, one has
\begin{align}
\label{form422'}
&\big(\sum_{j_1}\big|\sum_{\substack{j_{2},\cdots,j_{2p}\\\bs{j}\in\mc{R}_{2p},(Irr(\bs{j}))_{1}^{*}\leq N}}\prod_{m=2}^{2p}(\mr{w}_{j_{m}}|z_{j_{m}}|)\big|^{2}\big)^{\frac{1}{2}}
\\\notag\leq&2p(2N)^{3(p-1)}|z|_{\mr{w}}^{2p-1}+(2N)^{3p}|z|_{\mr{w}}^{2p-1}
\\\notag\leq&2p(2N)^{3p}|z|_{\mr{w}}^{2p-1}.
\end{align}
Hence, by \eqref{form340'}, \eqref{form422'} and the fact $p-q-2q'=l$, one has
\begin{align}
\label{form422}
|X^{(2)}(z)|_{\mr{w}}\lesssim&(2p)^{2p-4}\|\bs{c}\|_{\ell^{\infty}}\frac{(2N)^{12p(q+q')}}{\gamma^{q+2q'}|z|_{\mr{w}}^{2q+4q'}}(2p)(2N)^{3p}|z|_{\mr{w}}^{2p-1}.
\\\notag=&(2p)^{2p-3}\|\bs{c}\|_{\ell^{\infty}}\frac{(2N)^{12p(q+q')+3p}}{\gamma^{q+2q'}}|z|_{\mr{w}}^{2l-1}.
\end{align}
To sum up, by \eqref{form48}, \eqref{form410},  \eqref{form420} and \eqref{form422}, we can draw the conclusion.

\section{Resonant normal form and small divisor conditions}
\label{sec4}
In this section, we prove resonant normal form theorem for nonlinear Schr\"{o}dinger equation \eqref{form11} in Gevrey spaces. Then we give the small divisor conditions, which are well preserved under perturbations. 
\subsection{Resonant normal form}
\label{subsec41}
Expand $\varphi(|u|^{2})=\sum_{l\geq0}\frac{\varphi^{(l)}(0)}{l!}|u|^{2l}$ and rewrite $u(x)=\sum_{a\in\mb{Z}}z_{a}e^{{\rm i}ax}$ with a little abuse of the notation. Then the equation \eqref{form11} is equivalent to
\begin{equation}
\label{form211}
\dot{z}_{a}=-{\rm i}\frac{\pa H(z)}{\pa\bar{z}_{a}}
\end{equation}
with the Hamiltonian function
\begin{equation}
\label{form212}
H=H_{0}+\sum_{l=2}^{r}P_{2l}+R_{\geq2r+2}
=\sum_{a\in\mb{Z}_{*}}a^{2}|z_a|^{2}+\sum_{l=2}^{r}\sum_{\bs{j}\in\mc{M}_{2l}}\frac{\varphi^{(l-1)}(0)}{l!}z_{\bs{j}}+R_{\geq2r+2},
\end{equation}
where $R_{\geq2r+2}$ is the remainder of the Taylor expansion.
Notice that in a neighborhood of the origin, there exists a big enough positive constant $C_0\geq1$ such that the coefficient of $P_{2l}$ satisfies the estimate
\begin{equation}
\label{form213}
\left|\frac{\varphi^{(l-1)}(0)}{l!}\right|<C_0^{l-1}
\end{equation}
and the remainder $R_{\geq2r+2}$ satisfies the estimate
\begin{equation}
\label{form214}
\|X_{R_{\geq2r+2}}(z)\|_{\rho,\theta}\leq C_0^{r}\|z\|_{\rho,\theta}^{2r+1}.
\end{equation}
Recall the notation $a\lesssim b$, which mean that $a\leq Cb$ with the positive constant $C$ independent of parameter $r$. In particular, if the constant $C$ depends on other parameters, such as $\rho$, then write $a\lesssim_{\rho}b$.
\begin{theorem}
\label{th21}
Consider the Hamiltonian function \eqref{form212} and fix $\rho>0$, $\theta\in(0,1)$. There exists $\varepsilon_{0}>0$ such that for all positive integer $r\geq3$ and any $0<\varepsilon\leq\varepsilon_{0}$ satisfying $\varepsilon\lesssim_{\rho,\theta}r^{-\frac{3}{2}}$, there exists a canonical transformation $\phi^{(1)}:B_{\rho,\theta}(3\varepsilon)\longmapsto B_{\rho,\theta}(4\varepsilon)$ such that
\begin{equation}
\label{form215}
H\circ\phi^{(1)}=H_{0}+Z_{4}+Z_{6}+\sum_{l=3}^{r}K_{2l}+\ms{R}+R_{\geq2r+2}\circ\phi^{(1)},
\end{equation}
where
\begin{enumerate}[(i)]
 \item  the transformation satisfies the estimate
    \begin{equation}
    \label{form216}
    \|z-\phi^{(1)}(z)\|_{\rho,\theta}\lesssim_{\rho,\theta}\|z\|_{\rho,\theta}^{3},\quad\|(d\phi^{(1)}(z))^{-1}\|_{\ms{P}_{\rho,\theta}\mapsto\ms{P}_{\rho,\theta}}\lesssim1,
    \end{equation}
    and the same estimate is fulfilled by the inverse transformation;
\item $Z_{4}$ and $Z_{6}$ are integrable polynomials of the form
    \begin{align}
    \label{form217}
    Z_{4}(I)=&\varphi'(0)(\sum_{a\in\mb{Z}}I_{a})^{2}-\frac{\varphi'(0)}{2}\sum_{a\in\mb{Z}}I_{a}^{2},
    \\\label{form218}
    Z_{6}(I)=&-\frac{(\varphi'(0))^{2}}{2}\sum_{\substack{a,a'\in\mb{Z}\\a\neq a'}}\frac{I_{a}^{2}I_{a'}}{(a-a')^{2}}+\frac{\varphi''(0)}{6}\big(6(\sum_{a\in\mb{Z}}I_{a})^{3}
    -9(\sum_{a\in\mb{Z}}I_{a})(\sum_{a\in\mb{Z}}I_{a}^{2})
    +4\sum_{a\in\mb{Z}}I_{a}^{3}\big)
    \end{align}
    with the action $I_{a}=z_{a}\bar{z}_{a}$;
\item \label{th21-3} $K_{2l}$ is resonant polynomial of order $2l$ satisfying
    \begin{equation}
    \label{form219}
     K_{6}(z)=\sum_{\bs{j}\in\mc{R}_{6}\backslash\mc{I}_{6}}c_{\bs{j}}^{(6)}z_{\bs{j}}
     \quad\text{and}\quad K_{2l}(z)=\sum_{\bs{j}\in\mc{R}_{2l}}c_{\bs{j}}^{(2l)}z_{\bs{j}}\;\text{for}\;l\geq4
    \end{equation}
    with the coefficient estimate $\|c^{(2l)}\|_{\ell^{\infty}}:=\sup_{\bs{j}\in\mc{R}_{2l}}|c_{\bs{j}}^{(2l)}|\leq(8(l-1)^{3})^{l-2}C_0^{l-1}$;
\item\label{th21-4} the remainder $\ms{R}$ satisfies the estimate
    \begin{equation}
    \label{form220}
    \|X_{\ms{R}}(z)\|_{\rho,\theta}\lesssim (r^{3}C_1)^{r}\|z\|_{\rho,\theta}^{2r+1}
     \end{equation}
     with the positive constant $C_1$ depending on $C_0$, $\rho$ and $\theta$.
\end{enumerate}
\end{theorem}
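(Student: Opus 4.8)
The plan is to prove \Cref{th21} by the standard Birkhoff normal form iteration, carried out in $r-2$ steps, with careful bookkeeping of how the constants depend on $r$. Start from the Hamiltonian \eqref{form212}. At step $l$ (for $l=2,\dots,r$) I would assume inductively that $H$ has been conjugated to the form $H_0 + Z_4 + \sum_{m=3}^{l-1} K_{2m} + (\text{non-normalized terms of order} \geq 2l) + R_{\geq 2r+2}\circ(\cdots)$, and seek a Lie transform $\phi_l = \exp(X_{\chi_{2l}})$ with generating Hamiltonian $\chi_{2l}$ of order $2l$ solving the homological equation $\{H_0,\chi_{2l}\} + (\text{order-}2l\text{ part}) = K_{2l}$, where $K_{2l}$ is supported on $\mc{R}_{2l}$ (the resonant indices) and, for $l=3$, additionally on the non-integrable part $\mc{R}_6\setminus\mc{I}_6$ — the integrable sextic piece gets absorbed into $Z_6$. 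Because the unperturbed frequencies are $\omega_a = a^2$, on the non-resonant set $\mc{M}_{2l}\setminus\mc{R}_{2l}$ the small divisor is the \emph{integer} $\omega_{\bs j} = \sum \delta_i a_i^2 \neq 0$, so $|\omega_{\bs j}| \geq 1$ and the homological equation is solved with \emph{no} loss — this is the crucial feature of NLS on $\mb T$ that makes the resonant (rather than rational) normal form step elementary. The coefficient of $\chi_{2l}$ is therefore bounded by the coefficient of the order-$2l$ part being normalized.

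The second ingredient is the recursive coefficient estimate. Writing the order-$2l$ term after step $l'<l$ as a sum of iterated Poisson brackets of the original $P_{2m}$'s and the $\chi_{2m}$'s, I would track the $\ell^\infty$ norm of the coefficients. A single Poisson bracket of a polynomial of order $2a$ with one of order $2b$ produces a polynomial of order $2(a+b-1)$ whose coefficient norm is bounded by (number of contractions)$\times$(product of the two coefficient norms), and the number of contractions is $\lesssim ab \lesssim l^2$. Summing over all ways an order-$2l$ monomial can be built from brackets of the basic pieces — there are of order $l$ such combinatorial possibilities at each level — gives, by an induction on $l$, the bound $\|c^{(2l)}\|_{\ell^\infty} \leq (8(l-1)^3)^{l-2} C_0^{l-1}$ stated in (iii). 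The factor $(l-1)^3$ per step (rather than $l^2$) leaves room for the number-of-terms count; this is exactly the place where the $r$-dependence of the final estimate is generated, and getting a clean $(Cl^3)^l$-type bound — rather than something like $l!^c$ — is what later allows the stability time $\varepsilon^{-|\ln\varepsilon|^\beta}$ with $\beta$ arbitrarily close to $1$.

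For the integrable quartic and sextic normal forms $Z_4, Z_6$ in \eqref{form217}--\eqref{form218}, I would compute them explicitly: $Z_4$ is simply the restriction of $P_4 = \varphi'(0)\,\frac{1}{2\pi}\int |u|^4\,dx$ to the integrable set $\mc I_4$, which after expanding $\frac1{2\pi}\int|u|^4 = \sum_{a+b=c+d}z_a z_b\bar z_c\bar z_d = 2(\sum I_a)^2 - \sum I_a^2$ gives the stated formula; $Z_6$ comes from the integrable part of $\{\chi_4, P_4\} + P_6$ after one step, and the non-trivial rational-looking term $-\frac{(\varphi'(0))^2}{2}\sum_{a\neq a'} I_a^2 I_{a'}/(a-a')^2$ arises from the $1/\omega_{\bs j}$ factors in $\chi_4$ on the set where $\bs j$ becomes integrable only after the bracket; this is the standard Kuksin--P\"oschel / Grébert computation and I would reproduce it by expanding $\{\chi_4,P_4\}$ and collecting action-only monomials.

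Finally I would assemble the estimates. The remainder $\ms R$ collects all truncation errors $R_{\geq 2r+2}$-type contributions produced at each of the $r-2$ steps: a term of order $\geq 2r+2$ generated at step $l$ has coefficient norm controlled by products of the $\|c^{(2m)}\|_{\ell^\infty}$'s, hence by $(Cr^3)^r C_0^r$, and its vector field on $B_{\rho,\theta}(\varepsilon)$ is bounded using the algebra property of $\mc G_{\rho,\theta}$ (here the weight $\mr w_a = e^{\rho|a|^\theta}$ satisfies the needed submultiplicativity up to a $\rho,\theta$-dependent constant, absorbed into $C_1$) — giving \eqref{form220}. The flow estimates \eqref{form216}: each $\phi_l = \exp(X_{\chi_{2l}})$ moves points by $\lesssim \|z\|^{2l-1}$, so starting at radius $3\varepsilon$ and requiring $\varepsilon \lesssim_{\rho,\theta} r^{-3/2}$ (so that the cumulative displacement $\sum_l (Cr^3)^r \varepsilon^{2l-1}$ is $\ll \varepsilon$) keeps all iterates in $B_{\rho,\theta}(4\varepsilon)$, and the differential bound follows from the same Neumann-series argument. \textbf{Main obstacle.} The principal difficulty is the \emph{quantitative} coefficient bookkeeping: tracking the precise power of $l$ gained per Birkhoff step so that after $r$ steps one gets $(Cr^3)^r$ and not a factorial, since only this polynomial-type growth survives the later optimization $r \sim |\ln\varepsilon|^\beta$; the combinatorics of how many iterated brackets contribute to a given order, and the exact constant $8$ in $(8(l-1)^3)^{l-2}$, must be handled carefully. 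The analytically delicate but conceptually routine part is verifying that the Gevrey algebra constant does not itself grow with $r$, which is why the $C_1 = C_1(C_0,\rho,\theta)$ in \eqref{form220} is $r$-independent.
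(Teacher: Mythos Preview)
Your proposal is correct and follows essentially the same approach as the paper's proof: standard Birkhoff iteration with integer small divisors $|\omega_{\bs j}|\geq1$, the Poisson-bracket coefficient bound $\|c\|_{\ell^\infty}\leq 8l_1l_2\|c'\|_{\ell^\infty}\|c''\|_{\ell^\infty}$ (the paper's (4.11)), and the resulting induction yielding $\|c^{(2l)}\|_{\ell^\infty}\leq(8(l-1)^3)^{l-2}C_0^{l-1}$. The paper is terser than your outline---it cites \cite{BFG20b} for the explicit $Z_4,Z_6$ rather than recomputing them, and it invokes \Cref{le02} (the Gevrey vector-field estimate $\|X_{P_{2l}}\|_{\rho,\theta}\leq C^{2l-1}\|c\|_{\ell^\infty}\|z\|_{\rho,\theta}\|z\|_{2^{\theta-1}\rho,\theta}^{2l-2}$) for the remainder bound, which is exactly the $r$-independent algebra constant you flag as the delicate point.
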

\begin{proof}
The proof is standard and the intergable terms \eqref{form217}, \eqref{form218} have been calculated in Theorem 7.1 of \cite{BFG20b}. But we need more concrete coefficient estimates in \eqref{th21-3} and \eqref{th21-4} with the dependence on the order $l$ and the number of iteration $r$, respectively.
\\\indent Notice that for any homogenous polynomials $P_{2l_1}[c'], P_{2l_2}[c'']$ of order $2l_{1}$ and $2l_{2}$,  there exists a homogeneous polynomial $P[c]$ of order $2(l_{1}+l_{2}-1)$ such that $P[c]=\{P_{2l_1}[c'], P_{2l_2}[c'']\}$ with the coefficient estimate
\begin{equation}
\label{form221}
\|c\|_{\ell^{\infty}}\leq8l_{1}l_{2}\|c'\|_{\ell^{\infty}}\|c''\|_{\ell^{\infty}}.
\end{equation}
We prove the coefficient estimates by induction. Assume the homogenous polynomial $P'_{2k}[c^{(2k)}]$ of order $2k$ satisfies the estimate $\|c^{(2k)}\|_{\ell^{\infty}}\leq(8(k-1)^{3})^{k-2}C_0^{k-1}$. Notice that
$$K_{2l}{[c^{(2l)}]}=P_{2l}+\sum_{k=2}^{l-1}\{P'_{2k}[c^{(2k)}],P'_{2(l-k+1)}[c^{(2(l-k+1))}]\}.$$ Then by \eqref{form213} and \eqref{form221}, one has
\begin{align*}
\|c^{(2l)}\|_{\ell^{\infty}}\leq& C_0^{l-1}+\sum_{k=2}^{l-1}8k(l-k+1)(8(k-1)^{3})^{k-2}C_0^{k-1}(8(l-k)^{3})^{l-k-1}C_0^{l-k}
\\\leq&C_0^{l-1}+\sum_{k=2}^{l-1}8(l-1)^{2}(8(l-1)^{3})^{l-3}C_0^{l-1}
\\\leq&(8(l-1)^{3})^{l-2}C_0^{l-1}.
\end{align*}
\indent The remainder term $\ms{R}$ consists of the terms $R'=\int_{0}^{1}g(t)R_{2k}\circ\Phi^{(t)}dt$ and $R'\circ\phi$, where $\|g(t)\|_{L^{\infty}}\leq1$, $R_{2k}$  is a homogenous polynomial of order $2k$ with $r+1\leq k\leq2r$, $\Phi^{(t)}$ is a transformation near identity for any $0\leq t\leq1$, and $\phi$ is a composition of some transformations near identity.
Notice that
$$\|X_{R'}(z)\|_{\rho,\theta}\leq\|g(t)\|_{L^{\infty}}\|X_{R_{2k}\circ\Phi^{(t)}}(z)\|_{\rho,\theta}
\lesssim\sup_{0\leq t\leq1}\|X_{R_{2k}}(\Phi^{(t)}(z))\|_{\rho,\theta}$$
and
$\|X_{R'\circ\phi}(z)\|_{\rho,\theta}\lesssim\|X_{R'}(\phi(z))\|_{\rho,\theta}$.
By \Cref{le02} in Appendix, for any homogenous polynomial $R_{2k}[c]$ with the coefficients estimate $\|c\|_{\ell^{\infty}}<+\infty$, there exists a positive constant $C$ depending on $\rho,\theta$ such that
\begin{align*}
\|X_{R_{2k}}(z)\|_{\rho,\theta}\leq C^{2k-1}\|c\|_{\ell^{\infty}}\|z\|_{\rho,\theta}\|z\|_{2^{\theta-1}\rho,\theta}^{2k-2}\leq\|c\|_{\ell^{\infty}}(C\|z\|_{\rho,\theta})^{2k-1}.
\end{align*}
Hence, when $\varepsilon\lesssim_{\rho,\theta}r^{-\frac{3}{2}}$, there exists a constant $C_1$ depending on $C_0,C,\rho,\theta$ such that
$$\|X_{\ms{R}}(z)\|_{\theta,\rho}\lesssim\sum_{k=r+1}^{2r}(8(k-1)^{3})^{k-2}C_{0}^{k-1}(C\|z\|_{\theta,\rho})^{2k-1}
\leq (r^{3}C_1)^{r}\|z\|_{\rho,\theta}^{2r+1}$$
with the positive constant $C$ depending on $\rho,\theta$.
\end{proof}

\subsection{Small divisor conditions}
\label{subsec42}
For any $\bs{j}\in\mc{R}_{2l}$, let
\begin{equation}
\label{form31}
\Omega_{\bs{j}}^{(4)}(I)=\sum_{i=1}^{2l}\delta_{i}\frac{\pa Z_{4}}{\pa I_{a_{i}}}=-\varphi'(0)\sum_{i=1}^{2l}\delta_{i}I_{a_i},
\end{equation}
and
\begin{equation}
\label{form32}
\Omega_{\bs{j}}^{(6)}(I)=\Omega_{\bs{j}}^{(4)}(I)+\Omega_{\bs{j}}^{(6,6)}(I)=\Omega_{\bs{j}}^{(4)}(I)+\sum_{i=1}^{2l}\delta_{i}\frac{\pa Z_{6}}{\pa I_{a_{i}}}.
\end{equation}
Then $\Omega_{\bs{j}}^{(4)}=\Omega_{Irr(\bs{j})}^{(4)}$, $\Omega_{\bs{j}}^{(6)}=\Omega_{Irr(\bs{j})}^{(6)}$ and
$$\{z_{\bs{j}},Z_{4}\}=-{\rm i}\,\Omega_{Irr(\bs{j})}^{(4)}(I)z_{\bs{j}}\quad\text{and}\quad
\{z_{\bs{j}},Z_{4}+Z_{6}\}=-{\rm i}\,\Omega_{Irr(\bs{j})}^{(6)}(I)z_{\bs{j}}.$$
In particular, if $\bs{j}\in\mc{I}$, then $\Omega_{\bs{j}}^{(4)}(I)=\Omega_{\bs{j}}^{(6)}(I)=0$.
\\\indent For any given $r\geq3$, $N\geq1$ and $\gamma>0$, we say that $u:=\sum_{a\in\mb{Z}}z_{a}e^{{\rm i}ax}\in \mc{G}_{\rho,\theta}$ belongs to the set $\mc{U}_{\gamma}^{N}$, if for any $\bs{j}\in Irr(\mc{R})$ with $j_{1}^{*}\leq N$ and $\#\bs{j}=2l\leq2r$, one has
\begin{align}
\label{form33}
&|\Omega_{\bs{j}}^{(4)}(I)|>\gamma\|z\|_{\rho,\theta}^{2}(2N)^{-12l}e^{-2\rho|j^{*}_{2l}|^{\theta}},
\\\label{form34}
&|\Omega_{\bs{j}}^{(6)}(I)|>\gamma\|z\|_{\rho,\theta}^{2}(2N)^{-12l}\max\{e^{-2\rho|j^{*}_{2l}|^{\theta}},\gamma\|z\|_{\rho,\theta}^{2}\}.
\end{align}
Since the function $u$ is equivalent to its sequence of Fourier coefficients $\{z_a\}_{a\in\mb{Z}}$, we also say $z\in\mc{U}_{\gamma}^{N}$.
\\\indent The following lemma shows that the set $\mc{U}_{\gamma}^{N}$ is stability with respect to the action.
\begin{lemma}
\label{le31}
Fix $\rho>0, \theta\in(0,1)$ and $r\geq3$. For any $N\geq1$ and $\gamma>0$, let $z\in\mc{U}_{\gamma}^{N}$. If $z'\in\ms{P}_{\rho,\theta}$ satisfies the condition
\begin{equation}
\label{form35}
\|z'\|_{\rho,\theta}\leq4\|z\|_{\rho,\theta}\quad\text{and}\quad
\sup_{a\in\mb{Z}}e^{2\rho|a|^{\theta}}|I'_{a}-I_{a}|\lesssim\frac{\gamma^{2}\|z\|_{\rho,\theta}^{2}}{r(2N)^{12r}},
\end{equation}
then $z'\in\mc{U}_{\gamma/2}^{N}$.
\end{lemma}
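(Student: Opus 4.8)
The plan is to fix an arbitrary $\bs{j}\in Irr(\mc{R})$ with $j_1^*\le N$ and $\#\bs{j}=2l\le 2r$, to measure how far $\Omega^{(4)}_{\bs{j}}$ and $\Omega^{(6)}_{\bs{j}}$ can move when the actions pass from $I$ to $I'$, and then to conclude by the triangle inequality against the defining lower bounds \eqref{form33}--\eqref{form34} of $\mc{U}_\gamma^N$. Two structural facts make this cheap: by \eqref{form31} the divisor $\Omega^{(4)}_{\bs{j}}(I)=-\varphi'(0)\sum_{i=1}^{2l}\delta_i I_{a_i}$ is \emph{linear} in $I$, and by \eqref{form32} and \eqref{form218} the divisor $\Omega^{(6)}_{\bs{j}}=\Omega^{(4)}_{\bs{j}}+\Omega^{(6,6)}_{\bs{j}}$ with $\Omega^{(6,6)}_{\bs{j}}(I)=\sum_{i=1}^{2l}\delta_i\pa_{I_{a_i}}Z_6$ a polynomial of degree \emph{exactly two} in $I$.

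For the linear part I would write
$$|\Omega^{(4)}_{\bs{j}}(I')-\Omega^{(4)}_{\bs{j}}(I)|\le|\varphi'(0)|\sum_{i=1}^{2l}|I'_{a_i}-I_{a_i}|,$$
and then use that $j_1^*\le N$ forces $|a_i|\le N$ while $j^*_{2l}=\mu_{\min}(\bs{j})$ gives $|a_i|\ge|j^*_{2l}|$, so
$$|I'_{a_i}-I_{a_i}|\le e^{-2\rho|a_i|^\theta}\sup_{a\in\mb{Z}}e^{2\rho|a|^\theta}|I'_a-I_a|\le e^{-2\rho|j^*_{2l}|^\theta}\sup_{a\in\mb{Z}}e^{2\rho|a|^\theta}|I'_a-I_a|.$$
Summing the $2l\le 2r$ terms and inserting \eqref{form35}---here the factor $r$ in the denominator of \eqref{form35} is exactly what cancels the number of summands---gives $|\Omega^{(4)}_{\bs{j}}(I')-\Omega^{(4)}_{\bs{j}}(I)|\lesssim\gamma^2\|z\|_{\rho,\theta}^2(2N)^{-12r}e^{-2\rho|j^*_{2l}|^\theta}\le\gamma^2\|z\|_{\rho,\theta}^2(2N)^{-12l}e^{-2\rho|j^*_{2l}|^\theta}$ since $l\le r$, which against \eqref{form33} (and using $\gamma<1$) is a small multiple of the right-hand side.

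For $\Omega^{(6,6)}_{\bs{j}}$ I would estimate along the segment $I_t=I+t(I'-I)$, $t\in[0,1]$: since $0\le(I_t)_a\le\max\{I_a,I'_a\}\le e^{-2\rho|a|^\theta}\max\{\|z\|_{\rho,\theta}^2,\|z'\|_{\rho,\theta}^2\}$, the sums $\sum_a(I_t)_a\le\|z\|_{\rho,\theta}^2$ are controlled by the norm, and $\sum_a|I'_a-I_a|\lesssim_\rho\sup_a e^{2\rho|a|^\theta}|I'_a-I_a|$ because $\sum_a e^{-2\rho|a|^\theta}<\infty$; the kernel $(a-a')^{-2}$ in $Z_6$ is likewise summable against the Gevrey weights. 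Differentiating $\Omega^{(6,6)}_{\bs{j}}(I_t)$ in $t$ (a linear expression in $I_t$ evaluated at $I'-I$, with $\le 2l$ outer terms) therefore yields $|\Omega^{(6,6)}_{\bs{j}}(I')-\Omega^{(6,6)}_{\bs{j}}(I)|\lesssim_\rho r\|z\|_{\rho,\theta}^2\sup_a e^{2\rho|a|^\theta}|I'_a-I_a|\lesssim_\rho\gamma^2\|z\|_{\rho,\theta}^4(2N)^{-12l}$. Adding the linear part, $|\Omega^{(6)}_{\bs{j}}(I')-\Omega^{(6)}_{\bs{j}}(I)|\lesssim_\rho\gamma^2\|z\|_{\rho,\theta}^2(2N)^{-12l}\big(e^{-2\rho|j^*_{2l}|^\theta}+\gamma\|z\|_{\rho,\theta}^2\big)$, i.e. a small multiple of $\gamma\|z\|_{\rho,\theta}^2(2N)^{-12l}\max\{e^{-2\rho|j^*_{2l}|^\theta},\gamma\|z\|_{\rho,\theta}^2\}$, which has exactly the shape of \eqref{form34}. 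Finally, from $|\Omega^{(\bullet)}_{\bs{j}}(I')|\ge|\Omega^{(\bullet)}_{\bs{j}}(I)|-|\Omega^{(\bullet)}_{\bs{j}}(I')-\Omega^{(\bullet)}_{\bs{j}}(I)|$, together with \eqref{form33}--\eqref{form34} and the norm comparison $\|z'\|_{\rho,\theta}\le 4\|z\|_{\rho,\theta}$, one recovers \eqref{form33}--\eqref{form34} for $z'$ with $\gamma$ replaced by $\gamma/2$, provided the implicit constant in \eqref{form35} is chosen small enough (depending on $\rho,\theta$ and $\varphi$) to absorb the remaining $O(1)$ losses; that is, $z'\in\mc{U}^N_{\gamma/2}$.

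I expect the main obstacle to be the estimate of $\Omega^{(6,6)}$: one must use that $Z_6$ is \emph{exactly} quadratic in the actions (so its increment is genuinely linear along the segment and no higher power of $\|z\|_{\rho,\theta}$ appears) and keep track of the two competing scales in \eqref{form34}, the Fourier scale $e^{-2\rho|j^*_{2l}|^\theta}$ and the amplitude scale $\gamma\|z\|_{\rho,\theta}^2$, so that the perturbation is dominated by their maximum in every regime; the summability of $(a-a')^{-2}$ against $e^{-2\rho|a|^\theta}$ is what keeps every $a$-sum finite. The rest---powers of $2N$, of $r$, and of $\gamma$, and the $O(1)$ factors from $2l\le 2r$, from $\|z'\|_{\rho,\theta}\le 4\|z\|_{\rho,\theta}$, and from the halving $\gamma\to\gamma/2$---is bookkeeping handled by the smallness of the implicit constant in \eqref{form35}.
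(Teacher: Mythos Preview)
Your proposal is correct and follows essentially the same approach as the paper: exploit the linearity of $\Omega^{(4)}_{\bs{j}}$ in $I$ to pull out the factor $e^{-2\rho|j^*_{2l}|^\theta}$ from $\sup_a e^{2\rho|a|^\theta}|I'_a-I_a|$, exploit the quadraticity of $\Omega^{(6,6)}_{\bs{j}}$ to get a Lipschitz bound with coefficient $\lesssim l\|z\|_{\rho,\theta}^2$, and finish by the triangle inequality against \eqref{form33}--\eqref{form34}. The only cosmetic differences are that the paper states the $\Omega^{(6,6)}$ bound directly as $|\Omega^{(6,6)}_{\bs{j}}(I')-\Omega^{(6,6)}_{\bs{j}}(I)|\le Cl\|z\|_{\rho,\theta}^2\sup_a|I'_a-I_a|$ (without the Gevrey weight in the sup, which is harmless) and combines the two pieces via $2\max\{\cdot,\cdot\}$ rather than a sum.
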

\begin{proof}
For any $\bs{j}=(\delta_i,a_i)_{i=1}^{2l}\in Irr(\mc{R})$ with $j_{1}^{*}\leq N$ and $l\leq r$, one has
\begin{align}
\label{form36}
|\Omega^{(4)}_{\bs{j}}(I')-\Omega^{(4)}_{\bs{j}}(I)|
&\leq|\varphi'(0)|\sum_{i=1}^{2l}|I'_{a_{i}}-I_{a_{i}}|
\\\notag&\leq 2le^{-2\rho|j^{*}_{2l}|^{\theta}}|\varphi'(0)|\sup_{|a|\leq N}e^{2\rho|a|^{\theta}}|I'_{a}-I_{a}|.
\end{align}
When
$\sup_{|a|\leq N}e^{2\rho|a|^{\theta}}|I'_{a}-I_{a}|\leq\frac{\gamma\|z\|_{\rho,\theta}^{2}}{4l|\varphi'(0)|(2N)^{12l}}$,
one has
$$|\Omega_{\bs{j}}^{(4)}(I')|\geq|\Omega_{\bs{j}}^{(4)}(I)|-|\Omega^{(4)}_{\bs{j}}(I')-\Omega^{(4)}_{\bs{j}}(I)|
>\frac{\gamma}{2}\|z\|_{\rho,\theta}^{2}(2N)^{-12l}e^{-2\rho|j^{*}_{2l}|^{\theta}}.$$
\indent Notice that $\Omega_{\bs{j}}^{(6,6)}(I)$ is a homogenous polynomial of order two with respect to $I$. Then there exists a positive constant $C$ depending on $\varphi'(0)$ and $\varphi''(0)$ such that
\begin{equation}
\label{form37}
|\Omega_{\bs{j}}^{(6,6)}(I')-\Omega_{\bs{j}}^{(6,6)}(I)|\leq Cl\|z\|_{\rho,\theta}^{2}\sup_{a\in\mb{Z}}|I'_{a}-I_{a}|.
\end{equation}
When $\sup_{|a|\leq N}e^{2\rho|a|^{\theta}}|I'_{a}-I_{a}|\leq\frac{\gamma\|z\|_{\rho,\theta}^{2}}{8l|\varphi'(0)|(2N)^{12l}}$ and
$\sup_{a\in\mb{Z}}|I'_{a}-I_{a}|\leq\frac{\gamma^{2}\|z\|_{\rho,\theta}^{2}}{8Cl(2N)^{12l}},$
by \eqref{form36} and \eqref{form37}, one has
\begin{align*}
|\Omega_{\bs{j}}^{(6)}(I')-\Omega_{\bs{j}}^{(6)}(I)|
&\leq2\max\{|\Omega^{(4)}_{\bs{j}}(I')-\Omega^{(4)}_{\bs{j}}(I)|,|\Omega_{\bs{j}}^{(6,6)}(I')-\Omega_{\bs{j}}^{(6,6)}(I)|\}
\\&\leq\frac{\gamma}{2}\|z\|_{\rho,\theta}^{2}(2N)^{-12l}\max\{e^{-2\rho|j^{*}_{2l}|^{\theta}},\frac{\gamma}{2}\|z\|_{\rho,\theta}^{2}\}.
\end{align*}
Hence, one has
\begin{align*}
|\Omega_{\bs{j}}^{(6)}(I')|
\geq&|\Omega_{\bs{j}}^{(6)}(I)|-|\Omega^{(6)}_{\bs{j}}(I')-\Omega^{(6)}_{\bs{j}}(I)|
\\>&\frac{\gamma}{2}\|z\|_{\rho,\theta}^{2}(2N)^{-12l}\max\{e^{-2\rho|j^{*}_{2l}|^{\theta}},\frac{\gamma}{2}\|z\|_{\rho,\theta}^{2}\}.
\end{align*}
\end{proof}

\section{Rational normal form}
\label{sec5}
We hope to construct a canonical transformation normalizing the system \eqref{form215}. Firstly, we want to construct $\chi_{6}'$ in such a way that $\{\chi_{6}', Z_{4}\}=K_{6}$ with the resonant polynomial $K_{6}$ in \Cref{th21}. The rational Hamiltonian functions arise from $\chi_{6}'$ associated with the small denominators $\Omega^{(4)}(I)$ in \eqref{form31}.
Similarly, we need to eliminate the non-integrable part of higher order resonant functions by $Z_{4}+Z_{6}$.
\subsection{Elimination of the sextic term}
\label{subsec51}
The following theorem aims at  eliminating the resonant polynomial $K_{6}$ in \Cref{th21} by $Z_{4}$.
\begin{theorem}
\label{th51}
Fix $\rho>0$ and $\theta\in(0,1)$. There exists $\varepsilon_{0}>0$ such that for any $0<\varepsilon\leq\varepsilon_{0}$, all positive integer $r\geq4$, $N\gtrsim r$ and $\gamma\in(0,1)$ satisfying $\varepsilon\lesssim_{\rho,\theta}r^{-\frac{3}{2}}$ and $\varepsilon\lesssim 4^{-r}\gamma^{2}(2N)^{-6r-41}$, there exists $\chi'_{6}\in\mc{H}_{4}^{N}$ such that
\begin{align}
\label{form51}
H\circ\phi^{(1)}\circ\Phi_{\chi'_{6}}^{1}(z)=&H_{0}+Z_{4}+Z_{6}+\sum_{l=4}^{r}Q_{\Gamma_{2l}}[\bs{c}_{2l}]+R'_{\geq2r+2}
\\\notag&+\big(\sum_{l=3}^{r}R_{2l}+\ms{R}+R_{\geq2r+2}\circ\phi^{(1)}\big)\circ\Phi_{\chi'_{6}}^{1}
\end{align}
with $R_{\geq2r+2}$ in \eqref{form212}, $\ms{R}$ in \Cref{th21},
    \begin{equation}
    \label{form52}
     R_{6}(z)=\sum_{\substack{\bs{j}\in\mc{R}_{6}\backslash\mc{I}_{6}\\j_1^*>N}}c_{\bs{j}}^{(6)}z_{\bs{j}}
     \quad\text{and}\quad R_{2l}(z)=\sum_{\substack{\bs{j}\in\mc{R}_{2l}\\(Irr(\bs{j}))_1^*>N}}c_{\bs{j}}^{(2l)}z_{\bs{j}}\;\text{for}\;l\geq4
    \end{equation}
  satisfying the estimate $\|c^{(2l)}\|_{\ell^{\infty}}\leq(8(l-1)^{3})^{l-2}C_{0}^{l-1}$, where
\begin{enumerate}[(i)]
\item  the transformation $\Phi_{\chi'_{6}}^{t}:\mc{U}^{N}_{\frac{\gamma}{2^{r-2}}}\cap B_{s}(2\varepsilon)\longmapsto\mc{U}^{N}_{\frac{\gamma}{2^{r-1}}}\cap B_{s}(3\varepsilon)$ satisfies the estimate
    \begin{equation}
    \label{form53}
    \|z-\Phi_{\chi'_{6}}^{1}(z)\|_{\rho,\theta}\lesssim\frac{4^{r-1}N^{81}}{\gamma^{2}}\|z\|_{\rho,\theta}^{3},\quad\|(d\Phi_{\chi'_{6}}^{1}(z))^{-1}\|_{\ms{P}_{\rho,\theta}\mapsto\ms{P}_{\rho,\theta}}\lesssim1,
    \end{equation}
    and the same estimate is fulfilled by the inverse transformation;
\item $Q_{\Gamma_{2l}}[\bs{c}_{2l}]$ is defined in \eqref{form44} with $\Gamma_{2l}\in\mc{H}_{2l}^{N}$, and there exists a positive constant $C_2$ such that
\begin{equation}
\label{form54}
\|\bs{c}_{2l}\|_{\ell^{\infty}}\leq(64C_2(l-1)^{6})^{l-2}C_0^{2l-3},
\end{equation}
and for any $(\bs{j},\bs{h},\bs{k},n)\in\Gamma_{2l}$, one has
 \begin{equation}
 \label{form55}
  n=\#\bs{h}\leq2(l-3),\; \#\bs{k}=0;
  \end{equation}
\item $R'_{\geq2r+2}$ is the remainder resonant polynomial of order at least $2r+4$ and satisfies the estimate
  \begin{equation}
  \label{form56}
  \|X_{R'_{\geq2r+2}}(z)\|_{\rho,\theta}\lesssim\frac{(C_3N)^{72r^{2}}}{\gamma^{2r-3}}\|z\|_{\rho,\theta}^{2r+1}
  \end{equation}
  with the positive constant $C_3$ depending on $C_0$ and $C_2$.
\end{enumerate}
\end{theorem}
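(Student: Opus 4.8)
The plan is to remove the low-frequency part of the sextic resonant term $K_6$ by a single Lie transform generated by a rational Hamiltonian, and then to identify and estimate all the terms it produces. \emph{Construction of $\chi_6'$.} Since $\Omega^{(4)}_{\bs j}$ depends only on the action and $\{f(I),g(I)\}=0$ for functions of the action, the homological equation $\{\chi_6',Z_4\}=K_6^{\leq N}$ is solved explicitly by
$$\chi_6'\;=\;\sum_{\bs j\in\mc R_6\setminus\mc I_6,\ j_1^*\leq N}\ \frac{c^{(6)}_{\bs j}}{-\mr i\,\Omega^{(4)}_{\bs j}(I)}\;z_{\bs j},$$
where $K_6^{\leq N}$ (resp.\ $R_6$) denotes the part of $K_6$ with $j_1^*\leq N$ (resp.\ $j_1^*>N$). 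This $\chi_6'$ is exactly of the form \eqref{form44} with $\#\bs h=n=1$, $\bs h_1=\bs j$, $\#\bs k=0$, and coefficient sequence bounded by $\|c^{(6)}\|_{\ell^\infty}\leq 64C_0^2$ by the estimate in \Cref{th21}. To see $\chi_6'\in\mc H_4^N$ I would verify the conditions of \Cref{def21}: (1-1)--(1-5) and (2) are immediate; \eqref{form43} reduces to $\mr w_{j_6^*}^4\leq\prod_{m=1}^6\mr w_{j_m}$, which holds since each $\mr w_{j_m}\geq\mr w_{j_6^*}\geq1$; and \eqref{form42} becomes $\mr w_{j_6^*}^2\mr w_{j_1^*}^2\leq\prod_{m=1}^6\mr w_{j_m}$, i.e.\ $\mr w_{j_1^*}/\mr w_{j_2^*}\leq\mr w_{j_3^*}\mr w_{j_4^*}\mr w_{j_5^*}/\mr w_{j_6^*}$, which I would derive from the combinatorial structure of $Irr(\mc R_6)$ (zero momentum and resonance force the two largest frequencies to be so close that this holds).

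\emph{The transformation and item (i).} Let $\Phi^t_{\chi_6'}$ be the time-$t$ flow of $X_{\chi_6'}$. Applying \Cref{le41} to $\chi_6'$ (with $p=3$, $q=1$, $q'=0$) together with the small-divisor bound \eqref{form33} valid on $\mc U^N_{\gamma/2^{r-2}}$ gives $\|X_{\chi_6'}(z)\|_{\rho,\theta}\lesssim\tfrac{4^{r-1}N^{81}}{\gamma^{2}}\|z\|_{\rho,\theta}^{3}$; standard flow estimates then yield \eqref{form53} and $\Phi^1_{\chi_6'}:B_{\rho,\theta}(2\varepsilon)\to B_{\rho,\theta}(3\varepsilon)$, with the same bounds for the inverse flow. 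The smallness assumption $\varepsilon\lesssim 4^{-r}\gamma^2(2N)^{-6r-41}$ is precisely what makes the action displacement along $\Phi^t_{\chi_6'}$ small enough for \Cref{le31}, so that $\Phi^t_{\chi_6'}$ maps $\mc U^N_{\gamma/2^{r-2}}$ into $\mc U^N_{\gamma/2^{r-1}}$.

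\emph{Expansion and structure (items (ii)--(iii)).} I would split $H\circ\phi^{(1)}=G+\mc S$, where $G:=H_0+Z_4+Z_6+K_6^{\leq N}+\sum_{l=4}^r K_{2l}^{\leq N}$ is a polynomial of degree $2r$ and $\mc S:=R_6+\sum_{l=4}^r R_{2l}+\ms R+R_{\geq2r+2}\circ\phi^{(1)}$; then $\mc S\circ\Phi^1_{\chi_6'}$ is exactly the last group of \eqref{form51}. For the main part I use Taylor's formula with integral remainder,
$$G\circ\Phi^1_{\chi_6'}=\sum_{k=0}^{r-1}\frac{1}{k!}\,\mr{ad}_{\chi_6'}^{\,k}G+\int_0^1\frac{(1-t)^{r-1}}{(r-1)!}\big(\mr{ad}_{\chi_6'}^{\,r}G\big)\circ\Phi^t_{\chi_6'}\,dt,\qquad \mr{ad}_{\chi_6'}A:=\{A,\chi_6'\}.$$
Because $\bs j\in\mc R$ gives $\omega_{\bs j}=0$, one has $\{H_0,\chi_6'\}=0$, so $H_0$ is untouched; the order-$6$ part of the sum is $Z_6+K_6^{\leq N}+\{Z_4,\chi_6'\}=Z_6$, so $K_6$ is replaced by $R_6$. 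Iterating \Cref{le42} shows that each order-$2l$ part with $4\leq l\leq r$ is a rational function $Q_{\Gamma_{2l}}[\bs c_{2l}]$ with $\Gamma_{2l}\in\mc H_{2l}^N$; since only $\Omega^{(4)}$ divisors ever occur here (we never solve against $Z_6$) one has $\#\bs k=0$ and $n=\#\bs h$, and because $\mr{ad}_{\chi_6'}$ first turns $Z_4$ into the divisor-\emph{free} $-K_6^{\leq N}$, the divisor count of \Cref{re7-12} iterates to $\#\bs h\leq2(l-3)$, which is \eqref{form55}; moreover $\#\bs j<6l$, so \eqref{form45} applies with $\#\mc F_{2l}\lesssim l^3$. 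The order-$>2r$ part of $\sum_{k=0}^{r-1}\tfrac1{k!}\mr{ad}_{\chi_6'}^{\,k}G$ is a \emph{finite} sum of such $Q_{\Gamma_{2l}}$ with $r<l\leq2r$, which together with the Taylor integral remainder forms $R'_{\geq2r+2}$.

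\emph{Estimates and main obstacle.} The coefficient bound \eqref{form54} I would prove by induction on $l$, exactly as in the proof of \Cref{th21}: each $\mr{ad}_{\chi_6'}$ costs, by \Cref{le42}, a factor $\lesssim l^5\|c^{(6)}\|_{\ell^\infty}\lesssim l^5 C_0^2$, countered by a $1/k!$ from the Lie series, starting from $\|c^{(2m)}\|_{\ell^\infty}\leq(8(m-1)^3)^{m-2}C_0^{m-1}$; summing over the finitely many ways of reaching order $2l$ yields $(64C_2(l-1)^6)^{l-2}C_0^{2l-3}$. For \eqref{form56} I apply \Cref{le41} to each $Q_{\Gamma_{2l}}$ with $r<l\leq2r$ occurring in $R'_{\geq2r+2}$: there $q'=0$, $q\leq2(l-3)$ and $p=l+q<3l$, so the exponent $12p(q+q')+15p<72l^2+45l$ and the $\gamma$-power is $q+1\leq2l-5$; since this is a finite sum and $\|z\|_{\rho,\theta}\leq3\varepsilon<1$, it is dominated by $l=r+1$, and absorbing all $r^{O(r)}$ and $C^{O(r)}$ factors into $(C_3N)^{72r^2}$ (legitimate because $N\gtrsim r$) gives \eqref{form56}, the integral remainder carrying only a harmless extra $1/(r-1)!$. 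The main obstacle is precisely this quantitative bookkeeping: unlike \cite{BFG20b,BG21}, where $r$ is fixed, here the dependence on both the order $l$ and the iteration number $r$ must be tracked — keeping $\#\bs h\leq2(l-3)$ through the iterated Poisson brackets (which relies on the homological equation producing the divisor-free $-K_6^{\leq N}$, so that \Cref{re7-12} can be iterated) and establishing \eqref{form54}--\eqref{form56} with the explicit exponents, where the $1/k!$ of the Lie series and the finiteness of the range $r<l\leq2r$ are essential.
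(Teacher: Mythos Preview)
Your proposal is correct and follows essentially the same route as the paper: the explicit $\chi_6'$, the vector-field bound via \Cref{le41}, the action-stability argument via \Cref{le31}, the Lie-series expansion, and the iterated use of \Cref{le42}/\Cref{re7-12} to obtain $\Gamma_{2l}\in\mc H_{2l}^N$ with $n=\#\bs h\leq 2(l-3)$, $\#\bs k=0$, together with the inductive coefficient bound and the remainder estimate, all match the paper. Two minor organizational differences are worth noting: (a) the paper Taylor-expands each building block ($Z_4$, $Z_6$, $K_{2k}-R_{2k}$) \emph{separately} to exactly the order at which it first exceeds degree $2r$ (formulas \eqref{form512}--\eqref{form513}), so $R'_{\geq 2r+2}$ consists purely of integral remainders and no ``over-order polynomial tail'' has to be split off afterwards, whereas you expand all of $G$ to a single order $r-1$ and then sort by degree --- equivalent but slightly less clean; (b) you make the verification of \eqref{form42} for the seed $\chi_6'$ explicit, while the paper simply asserts $\Gamma_4\in\mc H_4^N$; your sketch (``zero momentum and resonance force $j_1^*$ and $j_2^*$ to be close'') is the right idea but would need the resonance condition $\sum\delta_ia_i^2=0$ (giving $(j_1^*)^2\le 5(j_2^*)^2$) rather than zero momentum alone.
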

\begin{proof}
In view of the expression of $K_{6}$ in \eqref{form219}, let
\begin{equation}
\label{form57}
\chi'_{6}(z)=\sum_{\substack{\bs{j}\in\mc{R}_{6}\backslash\mc{I}_{6}\\j_{1}^{*}\leq N}}\frac{c_{\bs{j}}^{(6)}z_{\bs{j}}}{{\rm i}\Omega_{\bs{j}}^{(4)}(I)},
\end{equation}
with $\|\bs{c}\|_{\ell^{\infty}}\leq 64C_{0}^{2}$, and then
\begin{equation}
\label{form58}
\{\chi'_{6}[\bs{c}](z),Z_{4}\}+K_{6}(z)=\sum_{\substack{\bs{j}\in\mc{R}_{6}\backslash\mc{I}_{6}\\j_{1}^{*}>N}}c_{\bs{j}}^{(6)}z_{\bs{j}}:=R_{6}
\end{equation}
Notice that the small divisor $\Omega_{\bs{j}}^{(4)}(I)$ satisfies the assumption \eqref{form7-6-1}. Then there exists $\Gamma_{4}\in\mc{H}_{4}^{N}$ satisfying $\#\bs{j}=6,n=\#\bs{h}=1,\#\bs{k}=0$ for any $(\bs{j},\bs{h},\bs{k},n)\in\Gamma_{4}$
such that $\chi'_{6}=Q_{\Gamma_{4}}[\bs{c}]$ defined in \eqref{form44}.
Then by  \eqref{re41-3} in \Cref{re41} and \Cref{le41}, one has
\begin{equation}
\label{form59}
\|X_{\chi'_{6}}(z)\|_{\rho,\theta}\lesssim3^{4}\|\bs{c}\|_{\ell^{\infty}}\frac{(2N)^{12\times3\times(1+0)+15\times3}}{(\gamma/2^{r-2})^{1+2\times0+1}}\|z\|_{\rho,\theta}^{3}\lesssim\frac{4^{r-2}N^{81}}{\gamma^{2}}\|z\|_{\rho,\theta}^{3}.
\end{equation}
For any $|t|\leq1$, as long as $\|\Phi_{\chi'_{6}}^{t}(z)\|_{\rho,\theta}\leq\frac{3}{2}\|z\|_{\rho,\theta}$, then
\begin{equation}
\label{form510}
\|\Phi_{\chi'_{6}}^{t}(z)-z\|_{\rho,\theta}\leq\big|\int_{0}^{t}\|X_{\chi'_{6}}(\Phi_{\chi'_{6}}^{t}(z))\|_{\rho,\theta}dt\big|\leq\sup_{|t|\leq1}\|X_{\chi'_{6}}(\Phi_{\chi'_{6}}^{t}(z))\|_{\rho,\theta}.
\end{equation}
Firstly, we need to make sure that the condition \eqref{form35} in \Cref{le31} holds.
Notice that
\begin{align}
\label{form511}
\sup_{a\in\mb{Z}}e^{2\rho|a|^{\theta}}||\Phi_{\chi'_{6}}^{t}(z)_{a}|^{2}-I_{a}|&\leq\|\Phi_{\chi'_{6}}^{t}(z)-z\|_{\rho,\theta}(\|\Phi_{\chi'_{6}}^{t}(z)\|_{\rho,\theta}+\|z\|_{\rho,\theta})
\\\notag&\leq\frac{5}{2}\sup_{|t|\leq1}\|X_{\chi'_{6}}(\Phi_{\chi'_{6}}^{t}(z))\|_{\rho,\theta}\|z\|_{\rho,\theta}.
\end{align}
By \eqref{form59}--\eqref{form511} and a bootstrap argument, when $\|z\|_{\rho,\theta}\lesssim4^{-r}\gamma^{2}(2N)^{-6r-38}$,
$$\Phi_{\chi'_{6}}^{t}:\mc{U}^{N}_{\frac{\gamma}{2^{r-2}}}\cap B_{\rho,\theta}(2\varepsilon)\longmapsto\mc{U}^{N}_{\frac{\gamma}{2^{r-1}}}\cap B_{\rho,\theta}(3\varepsilon)$$
 is well defined for any $|t|\leq1$, and then estimates in \eqref{form53} hold.
\\\indent By the fact that $\chi'_{6}$ commutes with $H_{0}$, one has
\begin{align*}
H\circ\phi^{(1)}\circ\Phi_{\chi'_{6}}^{1}(z)=&H_{0}+Z_{4}\circ\Phi_{\chi'_{6}}^{1}+Z_{6}\circ\Phi_{\chi'_{6}}^{1}+\sum_{l=3}^{r}(K_{2l}-R_{2l})\circ\Phi_{\chi'_{6}}^{1}
\\&+\big(\sum_{l=3}^{r}R_{2l}+\ms{R}+R_{\geq2r+2}\circ\phi^{(1)}\big)\circ\Phi_{\chi'_{6}}^{1}.
\end{align*}
Then by the homological equation \eqref{form58}, we can get \eqref{form51} with
\begin{align}
\label{form512}
Q_{\Gamma_{2l}}[\bs{c}_{2l}]=&\frac{1}{(l-2)!}ad_{\chi'_{6}}^{l-2}Z_{4}+\frac{1}{(l-3)!}ad_{\chi'_{6}}^{l-3}Z_{6}+\sum_{k=3}^{l}\frac{1}{(l-k)!}ad_{\chi'_{6}}^{l-k}(K_{2k}-R_{2k}),
\\\label{form513}
R'_{\geq2r+2}=&\int_{0}^{1}\frac{(1-t)^{r-2}}{(r-2)!}ad_{\chi'_{6}}^{r-1}Z_{4}\circ\Phi_{\chi'_{6}}^{t}dt+\int_{0}^{1}\frac{(1-t)^{r-3}}{(r-3)!}ad_{\chi'_{6}}^{r-2}Z_{6}\circ\Phi_{\chi'_{6}}^{t}dt
\\\notag&+\sum_{k=3}^{r}\int_{0}^{1}\frac{(1-t)^{r-k}}{(r-k)!}ad_{\chi'_{6}}^{r+1-k}(K_{2k}-R_{2k})\circ\Phi_{\chi'_{6}}^{t}dt.
\end{align}
By \Cref{le42}, \Cref{re7-12} and induction, one has $\Gamma_{2l}\in\mc{H}_{2l}^{N}$ with
$$n=\#\bs{h}\leq2(l-3), \quad\#\bs{k}=0$$
for any $(\bs{j},\bs{h},\bs{k},n)\in\Gamma_{2l}$. Then $\#\bs{j}=2\#\bs{h}+4\#\bs{k}+2l\leq 6(l-2)$.

Moreover, by \Cref{le42}, being given $\Gamma_{2l_{1}}\in\mc{H}_{2l_{1}}^{N}$,$\Gamma_{2l_{2}}\in\mc{H}_{2l_{2}}^{N}$ satisfying $\#\bs{j}'\lesssim l_1$, $\#\bs{j}''\lesssim l_2$ for any $(\bs{j}',\bs{h}',\bs{k}',n_1)\in\Gamma_{2l_1}$, $(\bs{j}'',\bs{h}'',\bs{k}'',n_2)\in\Gamma_{2l_2}$, there exists $\Gamma_{2l}\in\mc{H}_{2l}^{N}$ with $l=l_{1}+l_{2}-1$ such that
$$\{Q_{\Gamma_{2l_{1}}}[\tilde{\bs{c}}],Q_{\Gamma_{2l_{2}}}[\tilde{\tilde{\bs{c}}}]\}=Q_{\Gamma_{2l}}[\bs{c}],$$
where there exists a positive constant $C_2$ such that 
\begin{equation}
\label{form7-13-1}
\|\bs{c}\|_{\ell^{\infty}}\leq C_2(l-1)^{5}\|\tilde{\bs{c}}\|_{\ell^{\infty}}\|\tilde{\tilde{\bs{c}}}\|_{\ell^{\infty}}.
\end{equation}
Hence, by \eqref{form58}, \eqref{form512}, \eqref{form7-13-1} and induction, one has
\begin{small}
\begin{align*}
\|\bs{c}_{2l}\|_{\ell^{\infty}}\leq&(8(l-1)^{3})^{l-2}C_{0}^{l-1}+2\sum_{k=2}^{l-1}\frac{(l-1)^{5}\cdots(k+1)^{5}k^{5}}{(l-k)!}C_2^{l-k}(64C_{0}^{2})^{l-k}(8(k-1)^{3})^{k-2}C_{0}^{k-1}
\\\leq&(8(l-1)^{3})^{l-2}C_{0}^{l-1}+2\sum_{k=2}^{l-1}(l-1)^{5(l-k)}(64C_2)^{l-2}(l-2)^{3(k-2)}C_0^{2l-k-1}
\\\leq&(8(l-1)^{3})^{l-2}C_{0}^{l-1}+2\sum_{k=2}^{l-1}(64C_2)^{l-2}(l-1)^{5(l-2)}C_{0}^{2l-3}
\\\leq&(64C_2(l-1)^{6})^{l-2}C_{0}^{2l-3},
\end{align*}
\end{small}%
where the last inequality follows from $1+2(l-2)\leq(l-1)^{l-2}$ for any $l\geq4$.
\\\indent Now, we aim to prove the estimate \eqref{form56}. Similarly, by \Cref{le42}, there exists $\Gamma_{2(r+1)}\in\mc{H}_{2(r+1)}^{N}$ such that $\frac{1}{(r-2)!}ad_{\chi'_{6}}^{r-1}Z_{4}=Q_{\Gamma_{2(r+1)}}[\bs{c}]$ defined in \eqref{form44} satisfying the coefficient estimate $\|\bs{c}\|_{\ell^{\infty}}\leq(64C_0^{2}C_2r^{6})^{r}$, where for any $(\bs{j},\bs{h},\bs{k},n)\in\Gamma_{2(r+1)}$, one has
\begin{equation}
\label{form514}
n=\#\bs{h}\leq2(r-2),\quad\#\bs{k}=0\quad\text{and}\quad\#\bs{j}=2(r+1)+2\#\bs{h}+4\#\bs{k}\leq6(r-1).
\end{equation}
It is the same for $\frac{1}{(r-3)!}ad_{\chi'_{6}}^{r-2}Z_{6}, \frac{1}{(r-k)!}ad_{\chi'_{6}}^{r+1-k}K_{2k}$.
Then we consider these terms of the form
\begin{equation}
\label{form515}
\tilde{R}(z):=\int_{0}^{1}g(t)Q_{\Gamma_{2(r+1)}}\circ\Phi_{\chi'_{6}}^{t}(z)dt
\end{equation}
with $\|g(t)\|_{L^{\infty}}\leq1$. Notice that
\begin{align}
\label{form516}
\|X_{Q_{\Gamma_{2(r+1)}}\circ\Phi_{\chi'_{6}}^{t}}(z)\|_{\rho,\theta}
&=\|(d\Phi_{\chi'_{6}}^{t})^{-1}X_{Q_{\Gamma_{2(r+1)}}}(\Phi_{\chi'_{6}}^{t}(z))\|_{\rho,\theta}
\\\notag&\lesssim\|X_{Q_{\Gamma_{2(r+1)}}}(\Phi_{\chi'_{6}}^{t}(z))\|_{\rho,\theta}.
\end{align}
Notice that $N\gtrsim r$ and $\|\Phi_{\chi'_{6}}^{t}(z))\|_{\rho,\theta}\lesssim\|z\|_{\rho,\theta}$ for $|t|\leq1$. Then by \eqref{re41-3} in \Cref{re41}, \Cref{le41} and \eqref{form514}, there exists a positive constant $C_3$ depending on $C_0$ and $C_2$ such that
\begin{align}
\label{form517}
&\|X_{Q_{\Gamma_{2(r+1)}}}(\Phi_{\chi'_{6}}^{t}(z))\|_{\rho,\theta}
\\\notag\lesssim&(6r)^{6r}(64C_2C_0^{2}r^{6})^{r}\frac{(2N)^{12\times3(r-1)\times(2r-4+0)+15\times3(r-1)}}{(\gamma/2^{r-2})^{2r-4+1}}\|(\Phi_{\chi'_{6}}^{t}(z))\|_{\rho,\theta}^{2(r+1)-1}
\\\notag=&(6^{6}\times64C_2C_0^{2}r^{12})^{r}\frac{2^{(r-2)(2r-3)}(2N)^{72r^{2}-171r+99}}{\gamma^{2r-3}}\|(\Phi_{\chi'_{6}}^{t}(z))\|_{\rho,\theta}^{2(r+1)-1}
\\\notag\leq&\frac{(C_3N)^{72r^{2}-159r+99}}{\gamma^{2r-3}}\|z\|_{\rho,\theta}^{2r+1}.
\end{align}
Hence, by \eqref{form515}--\eqref{form517}, one has
\begin{align}
\label{form518}
\|X_{\tilde{R}}(z)\|_{\rho,\theta}\leq&\|g(t)\|_{L^{\infty}}\int_{0}^{1}\|X_{Q_{\Gamma_{2(r+1)}}\circ\Phi_{\chi'_{6}}^{t}}(z)\|_{\rho,\theta}dt
\\\notag\lesssim&
\frac{(C_3N)^{72r^{2}-159r+99}}{\gamma^{2r-3}}\|z\|_{\rho,\theta}^{2r+1}.
\end{align}
In view of \eqref{form513}, one has
$$\|X_{R'_{\geq2r+2}}(z)\|_{\rho,\theta}\leq r\|X_{\tilde{R}}(z)\|_{\rho,\theta}\lesssim\frac{(C_3N)^{72r^{2}-159r+100}}{\gamma^{2r-3}}\|z\|_{\rho,\theta}^{2r+1}.$$
\end{proof}

\subsection{Elimination of the higher order terms}
\label{subsec52}
This subsection aims at eliminating the non-integrable part of resonant polynomials $Q_{\Gamma_{8}},\cdots Q_{\Gamma_{2r}}$ in Theorem \ref{th51} by $Z_{4}+Z_{6}$.
In fact, we have the following rational normal form theorem.
\begin{theorem}
\label{th52}
Fix $\rho>0$ and $\theta\in(0,1)$. There exists $\varepsilon_{0}>0$ such that for any $0<\varepsilon\leq\varepsilon_{0}$, all positive integer $r\geq4$, $N\gtrsim r$ and $\gamma\in(0,1)$ satisfying $\varepsilon\lesssim_{\rho,\theta}r^{-\frac{3}{2}}$ and $\varepsilon\lesssim\gamma^{\frac{7}{2}}(C_4N)^{-\frac{341}{2}r}$ with some properly large positive constants $C_4$, there exists a canonical transformation $\phi^{(2)}:\mc{U}^{N}_{\frac{\gamma}{2}}\cap B_{\rho,\theta}(\frac{3}{2}\varepsilon)\longmapsto \mc{U}^{N}_{\frac{\gamma}{2^{r-2}}}\cap B_{\rho,\theta}(2\varepsilon)$ such that
\begin{align}
\label{form519}
H\circ\phi^{(1)}\circ\Phi_{\chi'_{6}}^{1}\circ\phi^{(2)}(z)=&H_{0}+Z_{4}+Z_{6}+\sum_{l=4}^{r}Z_{2l}[\bs{c}'_{2l}]+R''_{\geq2r+2}
\\\notag&+\big(\sum_{l=3}^{r}R_{2l}+\ms{R}+R_{\geq2r+2}\circ\phi^{(1)}\big)\circ\Phi_{\chi'_{6}}^{1}\circ\phi^{(2)},
\end{align}
with $R_{2l}$ defined in \eqref{form52}, $\ms{R}$ in \Cref{th21} and $R_{\geq2r+2}$ in \eqref{form212}, where
\begin{enumerate}[(i)]
 \item  the transformation satisfies the estimate
    \begin{equation}
    \label{form520}
    \|z-\phi^{(2)}(z)\|_{\rho,\theta}\lesssim\frac{32^{r}(C_5N)^{377}}{\gamma^{5}}\|z\|_{\rho,\theta}^{3} ,\quad\|(d\phi^{(2)}(z))^{-1}\|_{\ms{P}_{\rho,\theta}\mapsto\ms{P}_{\rho,\theta}}\lesssim1
    \end{equation}
with the positive constant $C_5$ depending on $C_0,C_2$,  and the same estimate is fulfilled by the inverse transformation;
 \item $Z_{2l}[\bs{c}'_{2l}]$ is the integrable rational polynomial and satisfies the estimate
   \begin{equation}
   \label{form521}
  \|\bs{c}'_{2l}\|_{\ell^{\infty}}\leq(C_6(l-1))^{5l^{2}}
   \end{equation}
   with the positive constant $C_6$ depending on $C_0$, $C_2$;
\item $R''_{\geq2r+2}$ is the remainder resonant polynomial of order at least $2r+4$ and satisfies the estimate
   \begin{equation}
   \label{form522}
   \|X_{R''_{\geq2r+2}}(z)\|_{\rho,\theta}\lesssim\frac{(C_7N)^{341r^{2}}}{\gamma^{6r}}\|z\|_{\rho,\theta}^{2r+1}
    \end{equation}
    with the positive constant $C_7$ depending on $C_6$.
\end{enumerate}
\end{theorem}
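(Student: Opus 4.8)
The plan is to continue the normalization of \eqref{form51} degree by degree, eliminating the non-integrable part of the rational resonant terms $Q_{\Gamma_{8}}[\bs{c}_{8}],\dots,Q_{\Gamma_{2r}}[\bs{c}_{2r}]$ one order at a time by Lie transforms generated by rational Hamiltonians, now allowing the small divisors $\Omega^{(6)}$ of \eqref{form32} into the denominators. I would induct on $l$ from $4$ to $r$: suppose that after the $(l-1)$-th step the Hamiltonian has the shape $H_{0}+Z_{4}+Z_{6}+\sum_{k=4}^{l-1}Z_{2k}[\bs{c}'_{2k}]+Q_{\Gamma_{2l}}[\bs{c}_{2l}]+(\text{rational resonant terms of order}\geq 2l+2)+(\text{remainder of order}\geq 2r+2)$ with $\Gamma_{2l}\in\mc{H}_{2l}^{N}$. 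By \eqref{re41-2} of \Cref{re41} the integrable part of $Q_{\Gamma_{2l}}$ depends only on $I$ and is moved into a new $Z_{2l}[\bs{c}'_{2l}]$; the remaining non-integrable part $\sum_{J}c_{J}z_{\bs{j}}/D_{J}(I)$ runs over $\bs{j}\in\mc{R}\setminus\mc{I}$, with $D_{J}$ a product of divisors $\Omega^{(4)},\Omega^{(6)}$. Since $D_{J}$ is a function of the action and Poisson-commutes with $Z_{4}+Z_{6}$, and since $\{z_{\bs{j}},Z_{4}+Z_{6}\}=-\mathrm{i}\,\Omega^{(6)}_{Irr(\bs{j})}(I)\,z_{\bs{j}}$, the homological equation is solved exactly (no correction term) by
\[
\chi_{2l}(z)=\sum_{(Irr(\bs{j}))_{1}^{*}\leq N}\frac{c_{J}}{\mathrm{i}\,\Omega^{(6)}_{Irr(\bs{j})}(I)}\cdot\frac{z_{\bs{j}}}{D_{J}(I)},
\]
i.e.\ one appends $Irr(\bs{j})$ to the $\Omega^{(6)}$-block of the denominator, lowering the order by two, so $\chi_{2l}=Q_{\Gamma_{2l-2}}$ for a suitable $\Gamma_{2l-2}$; the contributions with $(Irr(\bs{j}))_{1}^{*}>N$ are left untouched and absorbed into $R''_{\geq 2r+2}$. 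The transformation is then $\phi^{(2)}=\Phi^{1}_{\chi_{8}}\circ\Phi^{1}_{\chi_{10}}\circ\cdots\circ\Phi^{1}_{\chi_{2r}}$, and $Z_{2l}[\bs{c}'_{2l}]$ accumulates all integrable pieces produced at degree $2l$ through the expansions $H\mapsto\sum_{k\geq0}\tfrac{1}{k!}ad_{\chi_{2l'}}^{k}H$; note $\chi_{2l}$ commutes with $H_{0}$ because its numerator indices are resonant.

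The first real task is to verify $\chi_{2l}=Q_{\Gamma_{2l-2}}\in\mc{H}_{2l-2}^{N}$. Conditions (1-1)--(1-5) and the multiplicity bound (2) of \Cref{def21} are immediate from the construction: the order drops by two, $\#\bs{j}$ and the numerator are unchanged, $Irr(\bs{j})$ has length between $6$ and $\#\bs{j}$ since $\bs{j}$ is non-integrable resonant, and $n$ is not increased because the appended factor is an $\Omega^{(6)}$-factor. The crux is \eqref{form42} and \eqref{form43}: appending $Irr(\bs{j})$, whose smallest entry is $\nu:=\mu_{\min}(Irr(\bs{j}))$, multiplies the left-hand sides by $\mr{w}_{\nu}^{2}$ (and possibly raises the $\max$-factor in \eqref{form43}), so this factor must be charged against $\prod_{m}\mr{w}_{j_{m}}/\mr{w}_{j^{*}_{1}}^{2}$. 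This is exactly the computation already carried out for $\chi'_{6}$ in \Cref{th51}, now for an $\Omega^{(6)}$-divisor: it is delivered by combining the slack $\mr{w}_{j^{*}_{1}}^{-2}$ and the $\max$-term that $Q_{\Gamma_{2l}}$ already provides with the zero-momentum and resonance constraints defining $\mc{R}$, which control $\nu$ in terms of the larger entries of $\bs{j}$ and hence by $\mr{w}_{j^{*}_{1}}$ and the weights inside $Irr(\bs{j})$ itself. Once this single generator lies in the class, the iterated brackets $ad_{\chi_{2l}}^{k}Z_{4}$, $ad_{\chi_{2l}}^{k}Z_{6}$, $ad_{\chi_{2l}}^{k}Q_{\Gamma_{2m}}$ remain rational Hamiltonians in the appropriate classes by \Cref{le42}, and \Cref{re7-12} guarantees that $\#\bs{h}+\#\bs{k}$ grows by at most one per bracket, hence stays $\lesssim r$ along the entire procedure.

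The quantitative bounds then follow from bookkeeping. For the coefficients, iterate $\|\bs{c}\|_{\ell^{\infty}}\lesssim l^{5}\|\tilde{\bs{c}}\|_{\ell^{\infty}}\|\tilde{\tilde{\bs{c}}}\|_{\ell^{\infty}}$ from \Cref{le42} through the $\le l$ nested brackets producing $Z_{2l}[\bs{c}'_{2l}]$, starting from $\|\bs{c}_{2l}\|_{\ell^{\infty}}\le(64C_{2}(l-1)^{6})^{l-2}C_{0}^{2l-3}$ of \Cref{th51}; the product of $l^{5}$-type factors over $O(l^{2})$ brackets yields \eqref{form521}. For the remainder, apply \Cref{le41} to each $\chi_{2l}$ and to the Taylor-tail terms $\widetilde{R}(z)=\int_{0}^{1}g(t)\,Q_{\Gamma_{2(r+1)}}\circ\Phi_{\chi_{2l}}^{t}(z)\,dt$ of order $\ge 2r+4$ as in \eqref{form515}--\eqref{form518}: with $p\lesssim r$, $q=\#\bs{h}\lesssim r$, $q'=\#\bs{k}\lesssim r$, the power of $2N$ produced is $O(r^{2})$ and the power of $\gamma^{-1}$ is $O(r)$, giving \eqref{form522} with the exponent $341r^{2}$ and $\gamma^{-6r}$; the estimate \eqref{form520} for $\phi^{(2)}$ comes from $\|z-\Phi_{\chi_{2l}}^{1}(z)\|_{\rho,\theta}\lesssim\sup_{|t|\le1}\|X_{\chi_{2l}}(\Phi_{\chi_{2l}}^{t}(z))\|_{\rho,\theta}$ and a bootstrap on the ball radius, exactly as in \eqref{form510}--\eqref{form511}. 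Finally, to keep each flow inside $\mc{U}^{N}_{\gamma/2^{k}}\cap B_{\rho,\theta}(\cdot)$ one invokes \Cref{le31}: the displacement of the actions along a flow is $O(\|z-\Phi^{1}(z)\|_{\rho,\theta}\|z\|_{\rho,\theta})$, which by the smallness hypothesis $\varepsilon\lesssim\gamma^{7/2}(C_{4}N)^{-341r/2}$ verifies \eqref{form35}, so the non-resonance degrades only from $\gamma/2^{k}$ to $\gamma/2^{k+1}$ at each of the $\le r$ steps.

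The main obstacle, I expect, is keeping the two global-control conditions \eqref{form42} and \eqref{form43} \emph{together} through both operations performed here — first the division by $\Omega^{(6)}_{Irr(\bs{j})}$ solving the homological equation, and then the repeated Poisson brackets — since, as stressed after \eqref{form43'}, neither condition survives on its own; the extra $\Omega^{(6)}$-divisor has to be charged against precisely the slack factor $\mr{w}_{j^{*}_{1}}^{-2}$ in \eqref{form42} and the $\max$-term in \eqref{form43}, which forces one to use the zero-momentum and resonance structure of $\mc{R}$. The secondary, more clerical, difficulty is the exponent arithmetic: one must ensure that after $r$ iterations the number of small divisors in any term is $O(r)$ rather than $O(r^{2})$, so that coefficients and remainder blow up only like $N^{r^{2}}$, which is exactly what allows the stability time \eqref{form7-2-1} with $\beta$ arbitrarily close to $1$.
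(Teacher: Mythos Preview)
Your overall architecture matches the paper's: induct on $l$, solve the homological equation with $Z_{4}+Z_{6}$ at each step, compose the time-one flows, and control everything via \Cref{le41}, \Cref{le42}, \Cref{re7-12} and \Cref{le31}. The bookkeeping sketch for coefficients, remainder, and the action-stability argument is essentially correct.

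There is, however, a genuine gap in the key step. You append the new divisor $\Omega^{(6)}_{Irr(\bs{j})}$ to the $\bs{h}$-list (``the $\Omega^{(6)}$-block \dots lowering the order by two'', ``$n$ is not increased because the appended factor is an $\Omega^{(6)}$-factor''), and then try to verify \eqref{form42} by ``charging $\mr{w}_{\nu}^{2}$ against the slack $\mr{w}_{j^{*}_{1}}^{-2}$'' using momentum/resonance constraints. That argument does not work: the inequality \eqref{form42} for $Q_{\Gamma_{2l}}$ may already be tight, and multiplying its left-hand side by $\mr{w}_{\nu}^{2}$ need not be recoverable from $\bs{j}\in\mc{R}$. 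The whole purpose of the separate list $\bs{k}$ in \Cref{def21} is to absorb precisely these homological-equation divisors: in the paper one sets $\bs{k}_{\mathrm{new}}=(Irr(\bs{j}),\bs{k})$ with $\bs{h}$ unchanged (see \eqref{form532} and \eqref{form534}--\eqref{form534-2}). Then \eqref{form42} is preserved \emph{trivially} because its left-hand side does not involve $\bs{k}$ at all, and \eqref{form43} follows immediately from the old \eqref{form42} together with the elementary bound $\mr{w}_{\mu_{\min}(Irr(\bs{j}))}^{2}\leq \mr{w}_{j^{*}_{1}}^{2}$, since $\mu_{\min}(Irr(\bs{j}))$ is one of the $j_{m}$. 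No appeal to zero momentum or resonance is needed here.

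Two consequences of this correction: first, the formal order of $\chi'_{2l}$ drops by $4$, not $2$, so $\chi'_{2l}=Q_{\Gamma''_{2(l-2)}}$ with $\Gamma''_{2(l-2)}\in\mc{H}^{N}_{2(l-2)}$; second, the $\bs{k}$-divisors are estimated using the second branch of the $\max$ in \eqref{form7-6-2}, which is what produces the extra $\gamma^{2}$ per $\bs{k}$-factor and ultimately the exponent $\gamma^{-6r}$ in \eqref{form522}. A minor additional point: there are no terms with $(Irr(\bs{j}))_{1}^{*}>N$ to ``leave untouched'' at this stage, since condition (1-2) of \Cref{def21} already forces $(Irr(\bs{j}))_{1}^{*}\leq N$ for every $(\bs{j},\bs{h},\bs{k},n)\in\Gamma_{2l}$; the high-mode tails were separated once and for all into the $R_{2l}$ of \eqref{form52}.
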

In order to state the iterative lemma, we need some notations. For $l=4,\cdots, r$, denote
\begin{equation}
\label{form523}
\varepsilon^{(l)}=\varepsilon(2-\frac{l-3}{2(r-3)}), \quad\text{for}\quad l=4,\cdots, r.
\end{equation}
We proceed by induction from $l-1$ to $l$. Initially, write
\begin{equation}
\label{form524}
H^{(0)}(z)=H_{0}+Z_4+Z_6+\sum_{l=4}^{r}Q_{\Gamma_{2l}}[\bs{c}_{2l}]+R'_{\geq2r+2},
\end{equation}
with $Q_{\Gamma_{2l}}[\bs{c}_{2l}]$ and $R'_{\geq2r+2}$ in \Cref{th51}.
\begin{lemma}
\label{le51}
When $\varepsilon\lesssim\gamma^{\frac{7}{2}}(C_4N)^{-\frac{341}{2}r}$, there exists a rational polynomial $\chi'_{2l}[\bs{c}''_{2(l-2)}]$ such that
\begin{align}
\label{form525}
H^{(l-3)}(z):=&H^{(l-4)}\circ\Phi_{\chi'_{2l}}^{1}(z)
\\\notag=&H_{0}+Z_{4}+Z_{6}+\sum_{k=4}^{l}Z_{\Gamma'_{2k}}[\bs{c}'_{2k}]+\sum_{k=l+1}^{r}Q_{\Gamma^{(l-3)}_{2k}}[\bs{c}^{(l-3)}_{2k}]+R^{(l-3)}_{\geq2r+2},
\end{align}
where
\begin{enumerate}[(i)]
 \item the transformation $\Phi_{\chi'_{2l}}^{1}:\mc{U}^{N}_{\frac{\gamma}{2^{r+1-l}}}\cap B_{\rho,\theta}(\varepsilon^{(l)})\longmapsto \mc{U}^{N}_{\frac{\gamma}{2^{r+2-l}}}\cap B_{\rho,\theta}(\varepsilon^{(l-1)})$ satisfies the estimate
    \begin{equation}
    \label{form526}
    \|z-\Phi^{1}_{\chi'_{2l}}(z)\|_{\rho,\theta}\leq\frac{2^{(r+1-l)(6l-19)}(C_{5}N)^{341l^{2}-2049l+3117}}{\gamma^{6l-19}}\|z\|_{\rho,\theta}^{2l-5},
    \end{equation}
    \begin{equation}
    \label{form527}
    \|(d\Phi_{\chi'_{2l}}^{1}(z))^{-1}\|_{\ms{P}_{\rho,\theta}\mapsto\ms{P}_{\rho,\theta}}\lesssim1,
    \end{equation}
    and the same estimate is fulfilled by the inverse transformation;
 \item the integrable polynomial $Z_{\Gamma'_{2k}}[\bs{c}'_{2k}]$ and the resonant polynomial $Q_{\Gamma_{2k}^{(l-3)}}[\bs{c}^{(l-3)}_{2k}]$ are defined in \eqref{form44} satisfying $\Gamma'_{2k},\Gamma_{2k}^{(l-3)}\in\mc{H}_{2k}^{N}$ and the coefficients estimate
\begin{equation}
\label{form528}
 \|\bs{c}'_{2k}\|_{\ell^{\infty}}\leq(64C_{2}(k-1)^{5})^{(k+2)(k-1)}C_0^{6(k-3)},\quad\text{for}\;k=4,\cdots,l,
 \end{equation}
 \begin{equation}
 \label{form529}
 \|\bs{c}^{(l-3)}_{2k}\|_{\ell^{\infty}}\leq(64C_{2}(k-1)^{5})^{l(k-3)}C_0^{6(k-3)},\quad\text{for}\;k\geq l+1,
 \end{equation}
and for any $(\bs{j},\bs{h},\bs{k},n)\in\Gamma'_{2k},\Gamma_{2k}^{(l-3)}$, one has
  \begin{equation}
  \label{form530}
 n\leq\#\bs{h}\leq3k-10,\; \#\bs{k}\leq2(k-4),\; \#\bs{h}+\#\bs{k}\leq4k-14;
  \end{equation}
 \item the remainder resonant polynomial $R^{(l-3)}_{\geq2r+2}$ satisfies the estimate
   \begin{equation}
   \label{form531}
   \|X_{R^{(l-3)}_{\geq2r+2}}(z)\|_{\rho,\theta}\leq\frac{4^{(l-3)r}(C_{8}N)^{341r^{2}-1456r+1585}}{\gamma^{6r-15}}\|z\|_{\rho,\theta}^{2r+1}
    \end{equation}
    with the positive constant $C_8$ depending on $C_0$ and $C_2$.
\end{enumerate}
\end{lemma}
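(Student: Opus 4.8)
The proof is by induction on $l$, carrying out the step $l-1\to l$; for $l=4$ the input $H^{(0)}$ is given by \eqref{form524} and controlled by \Cref{th51}, and for $l>4$ the input $H^{(l-4)}$ is the output of the previous step, for which \eqref{form526}--\eqref{form531} hold with $l$ replaced by $l-1$. The first move is to solve the homological equation. In $H^{(l-4)}$ the only term non-integrable at order $2l$ is $Q_{\Gamma^{(l-4)}_{2l}}[\bs c^{(l-4)}_{2l}]$; split it into its integrable part ($\bs j\in\mc I$) and its non-integrable part, and define $\chi'_{2l}$ by dividing each non-integrable monomial by $\mathrm i\,\Omega^{(6)}_{Irr(\bs j)}(I)$, inserting $Irr(\bs j)$ as a new entry of the $\bs k$-slot. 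Because $\{z_{\bs j},Z_4+Z_6\}=-\mathrm i\,\Omega^{(6)}_{Irr(\bs j)}(I)z_{\bs j}$ while the $\Omega$-denominators and $Z_4+Z_6$ are functions of $I$ alone, so that their Poisson bracket vanishes, the Leibniz rule gives $\{\chi'_{2l},Z_4+Z_6\}+Q_{\Gamma^{(l-4)}_{2l}}[\bs c^{(l-4)}_{2l}]=Z_{\Gamma'_{2l}}[\bs c'_{2l}]$, the integrable remnant. One checks $\chi'_{2l}\in\mc H^N_{2(l-2)}$: the order identity is $\#\bs j-2\#\bs h-4(\#\bs k+1)=2l-4$; \eqref{form42} is unchanged; \eqref{form43} survives because $\mu_{\min}(Irr(\bs j))$ is an index of $\bs j$, so $\mr w_{\mu_{\min}(Irr(\bs j))}\le\mr w_{j_1^*}$ and the new $\bs k$-factor is absorbed by \eqref{form42}; \eqref{def41-2} is inherited with slack; and the increment $(\#\bs h,\#\bs k)\mapsto(\#\bs h,\#\bs k+1)$ together with the inductive \eqref{form530} gives \eqref{form530} for $\chi'_{2l}$. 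Likewise $Z_{\Gamma'_{2l}}\in\mc H^N_{2l}$ as a sub-collection of $\Gamma^{(l-4)}_{2l}$, and $\|\bs c''_{2(l-2)}\|_{\ell^\infty}=\|\bs c^{(l-4)}_{2l}\|_{\ell^\infty}$, $\|\bs c'_{2l}\|_{\ell^\infty}\le\|\bs c^{(l-4)}_{2l}\|_{\ell^\infty}$, so the inductive \eqref{form529} yields \eqref{form528}.

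Next, apply \Cref{le41} to $\chi'_{2l}=Q_{\Gamma''_{2(l-2)}}[\bs c'']$: since $\#\bs j\lesssim l$ and $q+q'\lesssim l$ by \eqref{form530}, the exponents $12p(q+q')+15p$ and $q+2q'+1$ in \eqref{form46} give the powers of $N$ and of $\gamma^{-1}$ in \eqref{form526}; integrating $X_{\chi'_{2l}}$ along its own flow and running the bootstrap of \Cref{th51} produces \eqref{form526}, and Gronwall produces \eqref{form527}. For the domain statement, the ball inclusion uses $\varepsilon^{(l-1)}-\varepsilon^{(l)}=\varepsilon/(2(r-3))$ against \eqref{form526}, while the $\mc U^N$-inclusion follows from \Cref{le31} once one notes $\sup_a e^{2\rho|a|^\theta}\big|\,|\Phi^t_{\chi'_{2l}}(z)_a|^2-I_a\big|\lesssim\|\Phi^t_{\chi'_{2l}}(z)-z\|_{\rho,\theta}\|z\|_{\rho,\theta}$; the smallness hypothesis $\varepsilon\lesssim\gamma^{7/2}(C_4N)^{-341r/2}$ is exactly what makes this drift smaller than $\gamma^2\|z\|^2_{\rho,\theta}/(r(2N)^{12r})$ and keeps the flow inside $B_{\rho,\theta}(\varepsilon^{(l-1)})$.

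Then conjugate: $H^{(l-3)}=H^{(l-4)}\circ\Phi^1_{\chi'_{2l}}=\sum_{j\ge0}\frac{1}{j!}ad_{\chi'_{2l}}^{j}(H^{(l-4)})$. The generator commutes with $H_0$ (resonant numerators, $I$-dependent denominators); at order $2l$ the $Z_4+Z_6$ bracket is the cancellation above, leaving $Z_{\Gamma'_{2l}}$; and every remaining bracket has order $>2l$ and is redistributed among the $Q^{(l-3)}_{2k}$, $k>l$, and the remainder. By \Cref{le42} each such bracket is again a $Q_\Gamma$ with $\Gamma\in\mc H^N$, and \Cref{re7-12} controls the growth of $\#\bs h$, $\#\bs k$ and $\#\bs h+\#\bs k$ additively; inserting the step-$l$ increments and the inductive \eqref{form530} gives \eqref{form530} at level $l$. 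For the coefficients one iterates $\|\bs c\|_{\ell^\infty}\lesssim l^5\|\tilde{\bs c}\|_{\ell^\infty}\|\tilde{\tilde{\bs c}}\|_{\ell^\infty}$ of \Cref{le42} along the $ad_{\chi'_{2l}}^j$-chains feeding a fixed $Q^{(l-3)}_{2k}$; such a chain raises the order by $2j(l-3)$, so $j\lesssim(k-3)/(l-3)$, the $1/j!$ weights absorb the combinatorial multiplicities, and the resulting $O(k-3)$ extra factors of size $64C_2(l-1)^5$ are exactly the jump of the exponent from $(l-1)(k-3)$ to $l(k-3)$ in \eqref{form529} (and a fortiori \eqref{form528}).

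Finally, $R^{(l-3)}_{\ge2r+2}$ collects $R^{(l-4)}_{\ge2r+2}\circ\Phi^1_{\chi'_{2l}}$, the Taylor tails $\int_0^1\frac{(1-t)^m}{m!}ad_{\chi'_{2l}}^{m+1}(\cdot)\circ\Phi^t_{\chi'_{2l}}dt$, and the full brackets of order $\ge2r+4$; each is $Q_\Gamma$ with $\Gamma\in\mc H^N_{2m}$, $m\ge r+1$, $\#\bs j\lesssim r$, $q+q'\lesssim r$, so \Cref{le41} plus boundedness of $(d\Phi^t_{\chi'_{2l}})^{-1}$ bounds its vector field by $(CN)^{O(r^2)}\gamma^{-O(r)}\|z\|^{2r+1}_{\rho,\theta}$; summing the $O(r)$ contributions with the factorial weights and the accumulated dilutions $\gamma\mapsto\gamma/2^{r+1-l}$ gives the prefactor $4^{(l-3)r}$ and \eqref{form531}. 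The hardest part is the third move: one must keep the sharp structural constraints \eqref{form530} alive through the whole induction — which rules out crude majorizations and forces systematic use of the additivity in \Cref{re7-12} — while simultaneously tracking the coefficient-, $N$- and $\gamma$-exponents to the precision demanded by \eqref{form526}, \eqref{form528}, \eqref{form529} and \eqref{form531}; the $1/j!$ weights are indispensable here, and one must verify that the per-step losses telescope over the $l-3\le r-3$ steps rather than compounding super-geometrically.
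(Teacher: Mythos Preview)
Your proposal is correct and follows essentially the same route as the paper: construct $\chi'_{2l}$ by dividing the non-integrable part of $Q_{\Gamma^{(l-4)}_{2l}}$ by $\mathrm i\,\Omega^{(6)}_{Irr(\bs j)}$, verify membership in $\mc H^N_{2(l-2)}$, bound its vector field via \Cref{le41}, control the flow via the bootstrap and \Cref{le31}, Taylor-expand $H^{(l-4)}\circ\Phi^1_{\chi'_{2l}}$, and track \eqref{form530} through the $ad$-chains using the additive increments in \Cref{re7-12}. One small slip: the structural bounds you inherit for $\chi'_{2l}$ are $\#\bs h\le 3l-10$, $\#\bs k\le 2l-7$, $\#\bs h+\#\bs k\le 4l-13$ (the paper's \eqref{form534}--\eqref{form534-2}), which are \emph{not} \eqref{form530} at $k=l-2$ (that would give $3l-16$, $2l-12$, $4l-22$); you need the looser inherited bounds, and it is precisely the identities $(3l-9)/(l-3)=3$, $(2l-6)/(l-3)=2$, $(4l-12)/(l-3)=4$ that make the supremum in the paper's computation of \eqref{form530} at level $k$ land exactly on $3k-10$, $2(k-4)$, $4k-14$---so be sure to carry the correct numbers for $\chi'_{2l}$, not \eqref{form530} at its own order.
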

\begin{proof}
Notice that for any $\bs{j}\in Irr(\mc{R})$, the small divisors $\Omega_{\bs{j}}^{(4)}(I)$ and $\Omega_{\bs{j}}^{(6)}(I)$ satisfies the assumption \eqref{form7-6-1}--\eqref{form7-6-2}. By the definition of rational Hamiltonian functions, write
$$Q_{\Gamma_{2l}^{(l-4)}}[\bs{c}_{2l}^{(l-4)}](z)=\sum_{J:=(\bs{j},\bs{h},\bs{k},n)\in\Gamma_{2l}^{(l-4)}}c_{J}\frac{z_{\bs{j}}}{\prod\limits_{m=1}^{n}\Omega^{(4)}_{\bs{h}_{m}}\prod\limits_{m=n+1}^{\#\bs{h}}\Omega^{(6)}_{\bs{h}_{m}}\prod\limits_{m=1}^{\#\bs{k}}\Omega^{(6)}_{\bs{k}_{m}}}$$
and let
\begin{equation}
\label{form532}
\chi'_{2l}[\bs{c}''_{2(l-2)}](z)=\sum_{\substack{J:=(\bs{j},\bs{h},\bs{k},n)\in\Gamma_{2l}^{(l-4)}\\\bs{j}\in\mc{R}\backslash\mc{I}}}\frac{c_{J}}{{\rm i}\Omega^{(6)}_{Irr(\bs{j})}(I)}\frac{z_{\bs{j}}}{\prod\limits_{m=1}^{n}\Omega^{(4)}_{\bs{h}_{m}}(I)\prod\limits_{m=n+1}^{\#\bs{h}}\Omega^{(6)}_{\bs{h}_{m}}(I)\prod\limits_{m=1}^{\#\bs{k}}\Omega^{(6)}_{\bs{k}_{m}}(I)}.
\end{equation}
Then there exists $\Gamma'_{2l}=\Gamma_{2l}^{(l-4)}\bigcap\big(\mc{I}\times\bigcup_{q\geq0}(Irr(\mc{R}))^{q}\times\bigcup_{q'\geq0}(Irr(\mc{R}))^{q'}\times\mb{N}\big)$ such that
\begin{equation}
\label{form533}
\{\chi'_{2l}[\bs{c}''_{2(l-2)}],Z_{4}+Z_{6}\}+Q_{\Gamma_{2l}^{(l-4)}}[\bs{c}^{(l-4)}_{2l}]=Z_{\Gamma'_{2l}}[\bs{c}'_{2l}],
\end{equation}
where
$$Z_{\Gamma'_{2l}}[\bs{c}'_{2l}](z)=\sum_{J:=(\bs{j},\bs{h},\bs{k},n)\in\Gamma'_{2l}}c_J\frac{z_{\bs{j}}}{\prod\limits_{m=1}^{n}\Omega^{(4)}_{\bs{h}_{m}}(I)\prod\limits_{m=n+1}^{\#\bs{h}}\Omega^{(6)}_{\bs{h}_{m}}(I)\prod\limits_{m=1}^{\#\bs{k}}\Omega^{(6)}_{\bs{k}_{m}}(I)}.$$
Notice that by \eqref{form530} for $k=l$ and \eqref{form532}, we can write $\chi'_{2l}[\bs{c}''_{2(l-2)}]=Q_{\Gamma''_{2(l-2)}}$ defined in \eqref{form44}, where $\Gamma''_{2(l-2)}\in\mc{H}_{2(l-2)}^{N}$ and for any $(\bs{j},\bs{h},\bs{k},n)\in\Gamma''_{2(l-2)}$, one has
\begin{align}
\label{form534}
&\#\bs{h}\leq3l-10,
\\\label{form534-1}
&\#\bs{k}\leq2(l-4)+1=2l-7,
\\\label{form534-2}
&\#\bs{h}+\#\bs{k}\leq4l-14+1=4l-13,
\\\label{form535}
&\#\bs{j}=2(l-2)+2\#\bs{h}+4\#\bs{k}\leq2(7l-22).
\end{align}
Besides, by \eqref{form529}, one has
\begin{align}
\label{form536}
\|\bs{c}'_{2l}\|_{\ell^{\infty}},\|\bs{c}''_{2(l-2)}\|_{\ell^{\infty}}\leq\|\bs{c}_{2l}^{(l-4)}\|_{\ell^{\infty}}\leq(64C_{2}(l-1)^{5})^{(l-1)(l-3)}C_0^{6(l-3)}.
\end{align}
Hence, by \eqref{re41-3} in \Cref{re41}, \Cref{le41}, \eqref{form534}--\eqref{form536}, there exists a positive constant $C_5$ depending on $C_0$ and $C_2$ such that
\begin{align}
\label{form537}
&\|X_{\chi'_{2l}}(z)\|_{\rho,\theta}
\\\notag\lesssim&(14l)^{14l}(64C_{2}(l-1)^{5})^{(l-1)(l-3)}C_0^{6(l-3)}\frac{(2N)^{12(7l-22)(4l-13)+15(7l-22)}}{(\gamma/2^{r+1-l})^{4l-13+2l-7+1}}\|z\|_{\rho,\theta}^{2l-5}
\\\notag\leq&14^{14l}C_0^{6l}(64C_2)^{l^{2}}l^{5l^{2}-6l+15}\frac{2^{(r+1-l)(6l-19)}(2N)^{336l^{2}-2043l+3102}}{\gamma^{6l-19}}\|z\|_{\rho,\theta}^{2l-5}
\\\notag\leq&\frac{2^{(r+1-l)(6l-19)}(C_5N)^{341l^{2}-2049l+3117}}{\gamma^{6l-19}}\|z\|_{\rho,\theta}^{2l-5}.
\end{align}
When $\varepsilon\lesssim8^{-r}\gamma^{3}(C_5N)^{-\frac{341}{2}l}$, one has $\|X_{\chi'_{2l}}(z)\|_{\rho,\theta}\leq\frac{\varepsilon}{2(r-3)}:=\varepsilon^{(l-1)}-\varepsilon^{(l)}$. Then for any $|t|\leq1$, one has $\|\Phi_{\chi'_{2l}}^{t}(z)\|_{\rho,\theta}\lesssim\|z\|_{\rho,\theta}$ and
$\|\Phi_{\chi'_{2l}}^{t}(z)-z\|_{\rho,\theta}\leq\sup_{|t|\leq1}\|X_{\chi'_{2l}}(\Phi_{\chi'_{2l}}^{t}(z))\|_{\rho,\theta}.$
Hence, one has
\begin{align*}
&\sup_{a\in\mb{Z}}e^{2\rho|a|^{\theta}}||\Phi_{\chi'_{2l}}^{t}(z)_{a}|^{2}-I_{a}|
\\\leq&\|\Phi_{\chi'_{2l}}^{t}(z)-z\|_{\rho,\theta}(\|\Phi_{\chi'_{2l}}^{t}(z)\|_{\rho,\theta}+\|z\|_{\rho,\theta})
\\\lesssim&\frac{2^{(r+1-l)(6l-19)}(C_5N)^{341l^{2}-2049l+3117}}{\gamma^{6l-19}}\sup_{|t|\leq1}\|\Phi_{\chi'_{2l}}^{t}(z)\|_{\rho,\theta}^{2l-5}(\Phi_{\chi'_{2l}}^{t}(z)\|_{\rho,\theta}+\|z\|_{\rho,\theta}).
\\\lesssim&\frac{2^{(r+1-l)(6l-19)}(C_5N)^{341l^{2}-2049l+3117}}{\gamma^{6l-19}}(2\|z\|_{\rho,\theta})^{2l-4}.
\end{align*}
When $\varepsilon\lesssim\gamma^{\frac{7}{2}}(C_5N)^{-\frac{341}{2}r}$,  the condition \eqref{form35} in \Cref{le31} holds. Hence, for any $|t|\leq1$, $\Phi_{\chi'_{2l}}^{t}$
 is well defined in $\mc{U}^{N}_{\frac{\gamma}{2^{r+1-l}}}\cap B_{\rho,\theta}(\varepsilon^{(l)})$ and the estimates \eqref{form526}, \eqref{form527} hold.
\\\indent Notice that
\begin{align}
\label{form538}
H^{(l-4)}\circ\Phi_{\chi'_{2l}}^{1}=&H_{0}+(Z_{4}+Z_{6})\circ\Phi_{\chi'_{2l}}^{1}+\sum_{m=4}^{l-1}Z_{\Gamma'_{2m}}[\bs{c}'_{2m}]\circ\Phi_{\chi'_{2l}}^{1}
\\\notag&+\sum_{m=l}^{r}Q_{\Gamma^{(l-4)}_{2m}}[\bs{c}^{(l-4)}_{2m}]\circ\Phi_{\chi'_{2l}}^{1}+R^{(l-4)}_{\geq2r+2}\circ\Phi_{\chi'_{2l}}^{1}.
\end{align}
By \Cref{le42}, \eqref{form534}--\eqref{form535}, there exists $\Gamma^{(l-3)}_{2k}\in\mc{H}_{2k}^{N}$ such that
\begin{align}
\label{form539}
Q_{\Gamma^{(l-3)}_{2k}}[\bs{c}^{(l-3)}_{2k}]=&\frac{1}{(\frac{k-3}{l-3})!}ad_{\chi'_{2l}}^{\frac{k-3}{l-3}}(Z_{4}+Z_{6})
\\\notag&+\sum_{m=4}^{l-1}\frac{1}{(\frac{k-m}{l-3})!}ad_{\chi'_{2l}}^{\frac{k-m}{l-3}}Z_{\Gamma'_{2m}}+\sum_{m=l}^{k}\frac{1}{(\frac{k-m}{l-3})!}ad_{\chi'_{2l}}^{\frac{k-m}{l-3}}Q_{\Gamma_{2m}^{(l-4)}}
\end{align}
with convention that $ad^{x}Q=0$ if $x\notin\mb{N}$.
By \Cref{le42}, \Cref{re7-12} and induction,  for any $(\bs{j},\bs{h},\bs{k},n)\in\Gamma^{(l-3)}_{2k}$, one has
\begin{align*}
&\#\bs{h}\leq \sup_{m=3,\cdots,k}\{(3m-10)+\frac{k-m}{l-3}(3l-10+1)\}=3k-10,
\\&\#\bs{k}\leq\sup_{m=3,\cdots,k}\{2(m-4)+\frac{k-m}{l-3}(2l-7+1)\}=2(k-4),
\\&\#\bs{h}+\#\bs{k}\leq\sup_{m=3,\cdots,k}\{(4m-14)+\frac{k-m}{l-3}(4l-13+1)\}=4k-14,
\end{align*}
and thus
\begin{align}
\label{form539''}
\#\bs{j}&=2\#\bs{h}+4\#\bs{k}+2k
\\\notag&\leq2(4k-14)+4(k-4)+2k
\\\notag&\leq2(7k-22).
\end{align}
Besides, by the coefficient estimate \eqref{form7-13-1} and induction, one has
\begin{small}
\begin{align*}
\|\bs{c}^{(l-3)}_{2k}\|_{\ell^{\infty}}\leq&\sum_{m=3}^{k-1}(C_2(k-1)^{5})^{\frac{k-m}{l-3}}(64C_{2}(k-1)^{5})^{(l-1)(m-3)+\frac{k-m}{l-3}(l-1)(l-3)}C_0^{6(m-3)+\frac{k-m}{l-3}6(l-3)}
\\&+(64C_{2}(k-1)^{6})^{(l-1)(k-3)}C_0^{6(k-3)}
\\\leq&(k-3)(C_2(k-1)^{5})^{k-3}(64C_{2}(k-1)^{5})^{(l-1)(k-3)}C_0^{6(k-3)}
\\&+(64C_{2}(k-1)^{5})^{(l-1)(k-3)}C_0^{6(k-3)}
\\\leq&(64C_{2}(k-1)^{5})^{l(k-3)}C_0^{6(k-3)},
\end{align*}
\end{small}%
where the last inequality follows from the fact $k-2\leq64^{k-3}$.
Then by \eqref{form533}, \eqref{form538} and \eqref{form539}, we can get \eqref{form525}, where $R^{(l-3)}_{\geq2r+2}=R_{\geq2r+2}^{(l-4)}\circ\Phi_{\chi'_{2l}}^{1}+\tilde{R}^{(l-3)}_{\geq2r+2}$ with
\begin{align*}
\tilde{R}^{(l-3)}_{\geq2r+2}=&\int_{0}^{1}\frac{(1-t)^{[\frac{r-3}{l-3}]}}{[\frac{r-3}{l-3}]!}ad_{\chi'_{2l}}^{[\frac{r-3}{l-3}]+1}(Z_{4}+Z_{6})\circ\Phi_{\chi'_{2l}}^{t}dt
\\\notag&+\sum_{m=4}^{l-1}\int_{0}^{1}\frac{(1-t)^{[\frac{r-m}{l-3}]}}{[\frac{r-m}{l-3}]!}ad_{\chi'_{2l}}^{[\frac{r-m}{l-3}]+1}Z_{\Gamma'_{2m}}\circ\Phi_{\chi'_{2l}}^{t}dt
\\\notag&+\sum_{m=l}^{r}\int_{0}^{1}\frac{(1-t)^{[\frac{r-m}{l-3}]}}{[\frac{r-m}{l-3}]!}ad_{\chi'_{2l}}^{[\frac{r-m}{l-3}]+1}Q_{\Gamma_{2m}^{(l-4)}}\circ\Phi_{\chi'_{2l}}^{t}dt.
\end{align*}
\indent Finally, we will prove the estimate \eqref{form531}. By \Cref{le42},  there exists $\Gamma_{2\tilde{r}_3}\in\mc{H}_{2\tilde{r}_3}^{N}$, $\Gamma_{2\tilde{r}_m}\in\mc{H}_{2\tilde{r}_m}^{N}$ for $m=4,\cdots,r$ with $r+1\leq\tilde{r}_3,\tilde{r}_m\leq r+l-3\leq2r$ such that
$$ad_{\chi'_{2l}}^{[\frac{r-3}{l-3}]+1}(Z_{4}+Z_{6})=Q_{\Gamma_{2\tilde{r}_3}}[\bs{c}],$$
$$ad_{\chi'_{2l}}^{[\frac{r-m}{l-3}]+1}Z_{\Gamma'_{2m}}=Q_{\Gamma_{2\tilde{r}_m}}[\bs{c}],\;\text{for}\;m=4,\cdots,l-1,$$
$$ad_{\chi'_{2l}}^{[\frac{r-m}{l-3}]+1}Q_{\Gamma_{2m}^{(l-4)}}=Q_{\Gamma_{2\tilde{r}_m}}[\bs{c}],\;\text{for}\;m=l,\cdots,r.$$
Parallelly to \eqref{form515}--\eqref{form518} in the proof of Theorem \ref{th51}, by \Cref{le42}, \Cref{le41}, \eqref{form529} and \eqref{form530},\eqref{form539''} for $k=r+1$, there exits a positive constant $C_8$ depending on $C_0$ and $C_2$ such that when $\varepsilon\lesssim\gamma^{3}(C_8N)^{-\frac{341}{2}r}$,  one has
\begin{align}
\label{form540}
&\|X_{\tilde{R}^{(l-3)}_{\geq2r+2}}(z)\|_{\rho,\theta}
\\\notag\lesssim&(14r)^{14r}(64C_2r^{5})^{(l-1)(r-2)}C_0^{6(r-2)}\frac{(2N)^{12(7r-15)(4r-10)+15(7r-15)}}{(\gamma/2^{r+1-l})^{4r-10+2(r-3)+1}}\|z\|_{\rho,\theta}^{2r+1}
\\\notag\leq&14^{14r}C_0^{6r}(64C_2)^{r^{2}}r^{5r^{2}-r+10}\frac{2^{6r^{2}}(2N)^{336r^{2}-1455r+1575}}{\gamma^{6r-15}}\|z\|_{\rho,\theta}^{2r+1}
\\\notag\leq&\frac{(C_{8}N)^{341r^{2}-1456r+1585}}{\gamma^{6r-15}}\|z\|_{\rho,\theta}^{2r+1}.
\end{align}
Notice that by \eqref{form531} and the estimate $\|\Phi_{\chi'_{2l}}^{1}(z)\|_{\rho,\theta}\leq2\|z\|_{\rho,\theta}$, one has
\begin{align}
\label{form541}
\|X_{R_{\geq2r+2}^{(l-4)}\circ\Phi_{\chi'_{2l}}^{1}}(z)\|_{\rho,\theta}=&\|(d\Phi_{\chi'_{2l}}^{1})^{-1}X_{R_{\geq2r+2}^{(l-4)}}(\Phi_{\chi'_{2l}}^{1}(z))\|_{\rho,\theta}
\\\notag&\lesssim\|X_{R_{\geq2r+2}^{(l-4)}}(\Phi_{\chi'_{2l}}^{1}(z))\|_{\rho,\theta}
\\\notag&\leq\frac{4^{(l-4)r}(C_{8}N)^{341r^{2}-1456r+1585}}{\gamma^{6r-15}}\|\Phi_{\chi'_{2l}}^{1}(z)\|_{\rho,\theta}^{2r+1}
\\\notag&\leq \frac{4^{(l-3)r-1}(C_{8}N)^{341r^{2}-1456r+1585}}{\gamma^{6r-15}}\|z\|_{\rho,\theta}^{2r+1}.
\end{align}
 By \eqref{form540} and \eqref{form541},  one has
\begin{align*}
\|X_{R^{(l-3)}_{\geq2r+2}}(z)\|_{\rho,\theta}
&\leq\|X_{\tilde{R}^{(l-3)}_{\geq2r+2}}(z)\|_{\rho,\theta}+\|X_{R_{\geq2r+2}^{(l-4)}\circ\Phi_{\chi'_{2l}}^{1}}(z)\|_{\rho,\theta}
\\&\leq\frac{4^{(l-3)r}(C_{8}N)^{341r^{2}-1456r+1585}}{\gamma^{6r-15}}\|z\|_{\rho,\theta}^{2r+1}.
\end{align*}
Take $C_4=\max\{8C_5,C_8\}$ and then we can draw the conclusion.
\end{proof}

\begin{proof}[Proof of Theorem {\sl\ref{th52}}]
By \Cref{le51}, After $r$ iterations, let $\phi^{(2)}=\Phi_{\chi_{8}}^{1}\circ\cdots\circ\Phi_{\chi_{2r}}^{1}$ and $R''_{\geq2r+2}=R^{(r-3)}_{\geq2r+2}$.
When $\varepsilon\lesssim\gamma^{\frac{7}{2}}(C_4N)^{-\frac{341}{2}r}$, the canonical transformation $\phi^{(2)}:\mc{U}^{N}_{\frac{\gamma}{2}}\cap B_{\rho,\theta}(\frac{3}{2}\varepsilon)\longmapsto \mc{U}^{N}_{\frac{\gamma}{2^{r-2}}}\cap B_{\rho,\theta}(2\varepsilon)$ satisfies the estimate
$$\sup_{\|z\|_{\rho,\theta}\leq\frac{3}{2}\varepsilon}\|z-\phi^{(2)}(z)\|_{\rho,\theta}\leq\sum_{l=4}^{r} \sup_{\|z\|_{\rho,\theta}\leq\varepsilon^{(l)}}\|z-\Phi^{1}_{\chi'_{2l}}(z)\|_{\rho,\theta}\lesssim\frac{32^{r}(C_5N)^{377}}{\gamma^{5}}\|z\|_{\rho,\theta}^{3}.$$
Might as well take $C_6=4C_0C_2$ and $C_7=4C_8$. Then estimates \eqref{form521} and \eqref{form522} follow from \eqref{form528} and \eqref{form531}, respectively.
\end{proof}

\section{Measure estimate}
\label{sec6}
In this section, we show that almost all small initial data belong to the set $\mc{U}_{\gamma}^{N}$ defined in \Cref{subsec42}. The Gaussian measure \eqref{form15}  is equivalent to the measure
\begin{equation}
\label{form38}
d\mu=\frac{e^{-\sum_{a\in\mb{Z}}e^{2\rho|a|^{\theta}}|a|^{2}I_{a}}dId\theta}{\int_{\sum_{a\in\mb{Z}}e^{2\rho|a|^{\theta}}I_{a}<1}e^{-\sum_{a\in\mb{Z}}e^{2\rho|a|^{\theta}}|a|^{2}I_{a}}dId\theta},
\end{equation}
where $z_{a}=I_{a}^{\frac{1}{2}}e^{{\rm i}\theta_{a}}$ with $I_{a}>0$ and $\theta_{a}\in[0,2\pi]$. Then we have the following measure estimate.
\begin{lemma}
\label{le32}
Fix $\rho>0$, $\theta\in(0,1)$, $r\geq3$, $N\geq12r$ and $\gamma\lesssim e^{-2\rho(3r)^{\theta}}$. If
\begin{equation}
\label{form39}
\varepsilon^{2}\lesssim\frac{\gamma}{r(2N)^{12r}},
\end{equation}
then there exists a positive constant $\lambda$ such that
\begin{equation}
\label{form310}
\mu(\varepsilon z\in\mc{U}_{\gamma}^{N})\geq1-e^{2\rho(3r)^{\theta}}\lambda\gamma.
\end{equation}
\end{lemma}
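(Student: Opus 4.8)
The plan is to estimate, for each individual resonant index, the probability that the corresponding small divisor condition \eqref{form33} or \eqref{form34} fails, and then sum these ``bad'' probabilities over all relevant indices using a union bound. First I would fix $\bs{j}\in Irr(\mc{R})$ with $j_1^*\leq N$ and $\#\bs{j}=2l\leq 2r$, and analyze the set of $z$ (equivalently the actions $I=\{I_a\}$) for which $|\Omega^{(4)}_{\bs{j}}(\varepsilon z)|\leq\gamma\|\varepsilon z\|_{\rho,\theta}^2(2N)^{-12l}e^{-2\rho|j^*_{2l}|^\theta}$. Since $\Omega^{(4)}_{\bs{j}}(I)=-\varphi'(0)\sum_{i}\delta_i I_{a_i}$ is a nontrivial real linear form in the actions with $\varphi'(0)\neq0$ (the form is nontrivial precisely because $\bs{j}\notin\mc{I}$, so the $\delta_i$ do not cancel in pairs), this is a thin slab in action space; under the Gaussian measure \eqref{form38} its measure is $\lesssim$ (width)$\big/$(typical spread of $\sum_i\delta_i I_{a_i}$). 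The width is $\sim\gamma\varepsilon^2(2N)^{-12l}e^{-2\rho|j^*_{2l}|^\theta}$, while the spread of the linear combination of the actions governed by the Gaussian weight $e^{2\rho|a|^\theta}|a|^2$ is at least of order $e^{-2\rho|a_{\min}|^\theta}|a_{\min}|^{-2}=e^{-2\rho|j^*_{2l}|^\theta}|j^*_{2l}|^{-2}$, so the two exponential factors $e^{-2\rho|j^*_{2l}|^\theta}$ cancel and the conditional probability of failure is $\lesssim\gamma(2N)^{-12l}|j^*_{2l}|^2$, which is $\lesssim\gamma(2N)^{-12l+2}$ since $|j^*_{2l}|\leq N$.

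Next I would handle $\Omega^{(6)}_{\bs{j}}=\Omega^{(4)}_{\bs{j}}+\Omega^{(6,6)}_{\bs{j}}$ in \eqref{form34}. There are two sub-cases corresponding to the two branches of the max. For the branch $\max=e^{-2\rho|j^*_{2l}|^\theta}$ the estimate is essentially the same as for $\Omega^{(4)}$: one views $\Omega^{(6)}_{\bs{j}}(I)$ as a polynomial of degree $\leq 2$ in the actions whose linear part is the nonzero form $\Omega^{(4)}_{\bs{j}}$, and slices along the $I_{a_{\min}}$ direction (freezing the other actions), on which $\Omega^{(6)}_{\bs{j}}$ is a polynomial of degree $\leq 2$ in the single variable $I_{a_{\min}}$ with leading-order-in-the-linear-part coefficient bounded below by $|\varphi'(0)|$; a quadratic in one variable spends measure $\lesssim$ (width)$^{1/2}$ near its zero set, but here the linear coefficient is bounded below so one actually gets the linear-type bound, giving again failure probability $\lesssim\gamma(2N)^{-12l+2}$. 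For the branch $\max=\gamma\|\varepsilon z\|_{\rho,\theta}^2$, the condition to violate is $|\Omega^{(6)}_{\bs{j}}(\varepsilon z)|\leq\gamma^2\varepsilon^4(2N)^{-12l}$, i.e.\ an even thinner slab of width $\sim\gamma^2\varepsilon^4(2N)^{-12l}$, against the same spread $\gtrsim e^{-2\rho N^\theta}N^{-2}$; using $\varepsilon^2\lesssim\gamma/(r(2N)^{12r})$ from \eqref{form39} and the hypothesis $\gamma\lesssim e^{-2\rho(3r)^\theta}$ this is again bounded by $\lesssim\gamma(2N)^{-12l}e^{2\rho(3r)^\theta}$ times harmless polynomial factors.

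Finally I would carry out the union bound. The number of $\bs{j}\in Irr(\mc{R})$ with $\#\bs{j}=2l$ and $j_1^*\leq N$ is at most $(2N+1)^{2l-1}$ (the zero-momentum constraint removes one free index), so summing the per-index failure probability $\lesssim\gamma(2N)^{-12l+2}e^{2\rho(3r)^\theta}$ over all such $\bs{j}$ gives $\lesssim\gamma e^{2\rho(3r)^\theta}\sum_{l=2}^{r}(2N)^{2l-1}(2N)^{-12l+2}\lesssim\gamma e^{2\rho(3r)^\theta}\sum_{l\geq2}(2N)^{-10l+1}$, and the geometric series converges to a constant, yielding $\mu(\varepsilon z\notin\mc{U}^N_\gamma)\leq\lambda\gamma e^{2\rho(3r)^\theta}$ for some absolute constant $\lambda$, which is \eqref{form310}. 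The main obstacle I anticipate is making the ``spread of a linear/quadratic combination of actions under the truncated Gaussian'' lower bound rigorous and uniform: one must justify that conditioning on all but one action, the remaining one-dimensional (truncated) Gaussian has density bounded above by something like $e^{2\rho|a|^\theta}|a|^2$ (from the weight) up to the normalization $\mu_g(B_{\rho,\theta}(1))^{-1}$, and that the constraint $\sum_a e^{2\rho|a|^\theta}I_a<1$ defining the unit ball does not distort this estimate — this is where the precise choice of which action to slice along (the one with smallest index, hence smallest weight, hence largest variance) and the hypotheses $N\geq 12r$, $\gamma\lesssim e^{-2\rho(3r)^\theta}$ are used to absorb all the index-dependent exponential factors into the single global factor $e^{2\rho(3r)^\theta}$.
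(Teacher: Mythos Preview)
Your treatment of $\Omega^{(4)}_{\bs{j}}$ is essentially the paper's argument: slice along the action $I_{a_{\min}}$ with $|a_{\min}|=j^*_{2l}$, use that the linear coefficient is $\pm\varphi'(0)$ up to multiplicity, and get failure probability $\lesssim\gamma(2N)^{-12l+2}$. The union bound over $\bs{j}$ is also the same.

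The gap is in your handling of $\Omega^{(6)}_{\bs{j}}$, specifically the branch $\max=\gamma\|\varepsilon z\|_{\rho,\theta}^2$. Slicing along $I_{a_{\min}}$ as you propose, the failure probability picks up a factor $e^{2\rho|j^*_{2l}|^\theta}$ from the Gaussian weight at the slicing index, exactly as in your $\Omega^{(4)}$ computation. For the first branch this factor cancels against the $e^{-2\rho|j^*_{2l}|^\theta}$ in the width, but for the second branch there is nothing to cancel it: you are left with $\gamma^2\varepsilon^2(2N)^{-12l}e^{2\rho|j^*_{2l}|^\theta}|j^*_{2l}|^2$. Since $|j^*_{2l}|$ can be as large as $N$, and in the application $N=|\ln\varepsilon|^{5/\theta}\gg r$, the factor $e^{2\rho N^\theta}$ is enormously larger than $e^{2\rho(3r)^\theta}$, and neither \eqref{form39} nor $\gamma\lesssim e^{-2\rho(3r)^\theta}$ can absorb it. Your remark that ``the linear coefficient is bounded below so one gets the linear-type bound'' is also not quite safe: the linear-in-$I_{a_{\min}}$ coefficient of $\Omega^{(6)}_{\bs{j}}$ is $-\varphi'(0)\sum_{a_i=a_{\min}}\delta_i$ \emph{plus} terms from $\Omega^{(6,6)}$ depending on the other actions, so a separate argument is needed.

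The paper resolves this by two additional ideas you are missing. First, it replaces $\Omega^{(6)}_{\bs{j}}$ by a truncation $\tilde\Omega^{(6)}_{\bs{j}}$ in which the quadratic part only involves actions $I_a$ with $a\notin\{a_1,\dots,a_{2l}\}$; then $\tilde\Omega^{(6)}_{\bs{j}}$ is genuinely linear in each $I_{a_i}$, the slicing along $I_{a_{\min}}$ is clean, and \eqref{form39} is used afterwards to show $|\Omega^{(6)}_{\bs{j}}-\tilde\Omega^{(6)}_{\bs{j}}|$ is below the threshold. Second, and this is the source of the factor $e^{2\rho(3r)^\theta}$, for the second branch the paper slices instead along an \emph{external} action $I_{\tilde a}$ with $\tilde a\in(-3l,3l)\setminus\{a_1,\dots,a_{2l}\}$, chosen by a number-theoretic lemma (\Cref{le03}) so that the coefficient of $I_{\tilde a}^2$ in $\tilde\Omega^{(6)}_{\bs{j}}$ is bounded below by $(6lN)^{-4l}$. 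This yields a square-root estimate whose exponential cost is only $e^{2\rho|\tilde a|^\theta}\leq e^{2\rho(3r)^\theta}$, and taking the minimum of the two slicing bounds handles both branches of the $\max$.
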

In order to prove the above measure estimate, we need the next two lemmas.
\begin{lemma}
\label{le33}
Fix $\rho>0$, $\theta\in(0,1)$, $r\geq3$, $N\geq r$ and $\gamma\in(0,1)$. Then there exists a positive constant $\lambda_{1}$ depending on $\varphi'(0)$ such that
\begin{equation}
\label{form311}
\mu(\forall\bs{j}\in Irr(\mc{R})\;\text{with}\;j_{1}^{*}\leq N\;\text{and}\;\#\bs{j}=2l\leq2r,\;|\Omega_{\bs{j}}^{(4)}(\varepsilon^{2}I)|\geq\gamma')>1-\lambda_{1}\gamma,
\end{equation}
where $\gamma'=\gamma\varepsilon^{2}(2N)^{-12l}e^{-2\rho|j^{*}_{2l}|^{\theta}}$.
\end{lemma}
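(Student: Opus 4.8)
The plan is to bound the complementary measure by a union bound over all relevant index vectors $\bs{j}$, controlling for each fixed $\bs{j}$ the probability that the linear-in-action quantity $\Omega_{\bs{j}}^{(4)}(\varepsilon^2 I) = -\varphi'(0)\varepsilon^2\sum_{i=1}^{2l}\delta_i I_{a_i}$ lands in a small window around $0$. First I would reduce to the non-integrable case: if $\bs{j}\in Irr(\mc{R})$ then no index cancels against its conjugate, so among the $2l$ terms $\delta_i I_{a_i}$ there is at least one index value, say the one realizing $\mu_{\min}(\bs{j})$ or more conveniently a distinguished largest-modulus index, whose action variable $I_{a}$ appears with a nonzero net coefficient $m$ (an integer) in the sum $\sum_i \delta_i I_{a_i}$. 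Write $\Omega_{\bs{j}}^{(4)}(\varepsilon^2 I) = -\varphi'(0)\varepsilon^2\big(m I_{a} + (\text{terms not involving }I_a)\big)$ with $|m|\geq 1$. Then, conditioning on all the other actions, the event $|\Omega_{\bs{j}}^{(4)}(\varepsilon^2 I)|<\gamma'$ is the event that $I_a$ lies in an interval of length at most $\frac{2\gamma'}{|\varphi'(0)|\varepsilon^2 |m|}\leq \frac{2\gamma'}{|\varphi'(0)|\varepsilon^2}=\frac{2\gamma}{|\varphi'(0)|}(2N)^{-12l}e^{-2\rho|j^*_{2l}|^\theta}$.

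Next I would estimate the one-dimensional conditional density of $I_a$ under $\mu$. From \eqref{form38}, up to the normalizing constant the marginal density of $I_a$ on its allowed range is proportional to $e^{-e^{2\rho|a|^\theta}|a|^2 I_a}$ (times lower-dimensional normalizing factors coming from integrating out the constraint $\sum e^{2\rho|a|^\theta}I_a<1$, which only decrease the density relative to the unconstrained Gaussian-type law). Hence the conditional density of $I_a$ is bounded above by $e^{2\rho|a|^\theta}|a|^2$ up to an absolute constant; actually the cleanest route is to dominate the constrained measure by the unconstrained product of exponential laws, for which the density of $I_a$ is exactly $e^{2\rho|a|^\theta}|a|^2 e^{-e^{2\rho|a|^\theta}|a|^2 I_a}\leq e^{2\rho|a|^\theta}|a|^2$. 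Therefore the probability that $I_a$ falls in the bad interval is at most
\begin{equation*}
e^{2\rho|a|^\theta}|a|^2\cdot\frac{2\gamma}{|\varphi'(0)|}(2N)^{-12l}e^{-2\rho|j^*_{2l}|^\theta}.
\end{equation*}
I would choose $a$ to be the index with $|a|=j^*_{2l}=\mu_{\min}(\bs{j})$ if it has nonzero net coefficient, or otherwise argue that some index with modulus at most $j^*_1\leq N$ works, so that $e^{2\rho|a|^\theta}|a|^2 e^{-2\rho|j^*_{2l}|^\theta}$ is dominated by $|a|^2 \leq N^2$ (when $a$ realizes $\mu_{\min}$ the exponentials cancel; otherwise one bounds $e^{2\rho|a|^\theta}\leq e^{2\rho N^\theta}$ but this is absorbed since the weights are increasing — here a small care is needed, see the obstacle below). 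Thus each bad event has probability $\lesssim \gamma (2N)^{-12l}N^2\lesssim \gamma (2N)^{-12l+2}$.

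Finally I would run the union bound. The number of $\bs{j}\in Irr(\mc{R})_{2l}$ with $j_1^*\leq N$ is at most $(2N+1)^{2l}\leq (4N)^{2l}$, since each of the $2l$ entries is a pair $(\delta_i,a_i)$ with $|a_i|\leq N$ (the sign choices and momentum/resonance constraints only cut this down, and the factor $2^{2l}$ for signs is absorbed). Summing over $l$ from $3$ to $r$,
\begin{equation*}
\mu\big(\exists\,\bs{j}:\ |\Omega_{\bs{j}}^{(4)}(\varepsilon^2 I)|<\gamma'\big)\lesssim \sum_{l=3}^{r}(4N)^{2l}\cdot\gamma (2N)^{-12l+2}\lesssim \gamma N^2\sum_{l=3}^{r}(2N)^{-8l}\lesssim\gamma,
\end{equation*}
using $N\geq r$ (indeed $N\geq 1$ suffices here since the geometric series converges uniformly). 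This gives the stated bound with $\lambda_1$ depending only on $\varphi'(0)$ and an absolute constant. The main obstacle is the bookkeeping in the step where one picks the distinguished index $a$: one must verify that in a non-integrable resonant tuple there genuinely is an index whose net multiplicity in $\sum_i\delta_i I_{a_i}$ is nonzero and whose modulus is small enough that the factor $e^{2\rho|a|^\theta}$ from its density does not overwhelm the gain $e^{-2\rho|j^*_{2l}|^\theta}$ in $\gamma'$ — the natural choice is the smallest-modulus index $j^*_{2l}$, and one must check that $Irr$-ness forces this index (or another of comparably small modulus) to survive with nonzero coefficient; a short combinatorial argument handling the case where the minimal index is "cancelled" at the level of actions but not at the level of $(\delta,a)$-pairs closes this. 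Everything else is a routine union bound and a one-dimensional density estimate.
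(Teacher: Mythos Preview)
Your approach is essentially the same as the paper's: union bound over $\bs{j}$, then for each $\bs{j}$ condition on all actions except the one at the smallest-modulus index $\tilde a$ with $|\tilde a|=j_{2l}^*$, and use that the exponential weight $e^{2\rho|\tilde a|^\theta}$ coming from the density of $I_{\tilde a}$ exactly cancels the factor $e^{-2\rho|j_{2l}^*|^\theta}$ in $\gamma'$. The paper carries this out by the change of variables $x_a=e^{2\rho|a|^\theta}|a|^2 I_a$ and an explicit lower bound on the normalizing constant, but the content is identical.

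Your flagged ``main obstacle'' is not an obstacle: for $\bs{j}\in Irr(\mc{R})$ the condition $j_i\neq\bar{j_k}$ means that whenever $a_i=a_k$ one must have $\delta_i=\delta_k$, so every index value that appears in $\bs{j}$ has nonzero net coefficient in $\sum_i\delta_i I_{a_i}$; in particular the smallest-modulus index works directly with $|m|\geq 1$. Two small points of care: your ``dominate by the unconstrained product of exponentials'' step costs a fixed factor $\mu_g(B_{\rho,\theta}(1))^{-1}$ (the inequality does not go the right way without it), and the case $\tilde a=0$ needs a separate one-line treatment since the marginal of $I_0$ is not exponential; the paper's explicit denominator bound handles both.
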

\begin{proof}
Consider the complementary
$$\Theta^{(1)}=\{\exists \bs{j}\in Irr(\mc{R})\;\text{with}\;j_{1}^{*}\leq N\;\text{and}\;\#\bs{j}=2l\leq2r\;\text{such that}\;|\Omega_{\bs{j}}^{(4)}(\varepsilon^{2}I)|\leq\gamma'\}.$$
In view of \eqref{form38}, for any $\bs{j}=(\delta_i,a_i)_{i=1}^{2l}\in Irr(\mc{R})$ with $j_{1}^{*}\leq N$, one has
\begin{align}
\label{form312}
&\mu(|\Omega_{\bs{j}}^{(4)}(\varepsilon^{2}I)|\leq\gamma')
\\\notag=&\lim_{M\to\infty}\frac{\int_{\sum_{|a|\leq M}e^{2\rho|a|^{\theta}}I_{a}<1,|\Omega_{\bs{j}}^{(4)}(\varepsilon^{2}I)|\leq\gamma'}e^{-\sum_{|a|\leq M}e^{2\rho|a|^{\theta}}|a|^{2}I_{a}}\prod_{|a|\leq M}dI_{a}}{\int_{\sum_{|a|\leq M}e^{2\rho|a|^{\theta}}I_{a}<1} e^{-\sum_{|a|\leq M}e^{2\rho|a|^{\theta}}|a|^{2}I_{a}}\prod_{|a|\leq M}dI_{a}}
\\\notag=&\lim_{M\to\infty}\frac{\int_{I_0+\sum_{0\neq|a|\leq M}|a|^{-2}x_{a}<1,|\Omega_{\bs{j}}^{(4)}(\varepsilon^{2}I)|\leq\gamma'}e^{-\sum_{0\neq|a|\leq M}x_{a}}dI_0\prod_{0\neq|a|\leq M}dx_{a}}{\int_{I_0+\sum_{0\neq|a|\leq M}|a|^{-2}x_{a}<1} e^{-\sum_{0\neq|a|\leq M}x_{a}}dI_0\prod_{0\neq|a|\leq M}dx_{a}}
\end{align}
with $x_{a}=e^{2\rho|a|^{\theta}}|a|^{2}I_{a}$ for $a\neq0$. Let $A_1=(1+\sum_{a\in\mb{Z}\backslash\{0\}}|a|^{-\frac{3}{2}})^{-1}$ and then
\begin{equation}
\label{form313}
\{I_0+\sum_{0\neq|a|\leq M}|a|^{-2}x_{a}<1\}\supseteq\{I_0<A_1,\; \text{and}\;|a|^{-\frac{1}{2}}x_a<A_1, \;\text{for any}\;a\neq0\}.
\end{equation}
Hence, the denominator of \eqref{form312} has the lower bound
\begin{equation}
\label{form314}
A_1\prod_{0\neq|a|\leq M}\int_{0}^{A_1\sqrt{|a|}}e^{-x_a}dx_{a}.
\end{equation}
Let $j_{2l}^{*}=|\tilde{a}|$, and then by $|\Omega_{\bs{j}}^{(4)}(\varepsilon^{2}I)|<\gamma'=\gamma\varepsilon^{2}(2N)^{-12l}e^{-2\rho|j^{*}_{2l}|^{\theta}}$, one has
\begin{equation}
\label{form315}
A_2-\frac{\gamma|\tilde{a}|^{2}}{(2N)^{12l}|\varphi'(0)|}\leq x_{\tilde{a}}\leq A_2+\frac{\gamma|\tilde{a}|^{2}}{(2N)^{12l}|\varphi'(0)|}.
\end{equation}
with $A_2=|\sum_{a_i\neq\tilde{a}}\delta_{i}e^{2\rho(|\tilde{a}|^{\theta}-|a|^{\theta})}\big|\frac{\tilde{a}}{a_i}\big|^{2}x_{a_{i}}|$.
By \eqref{form312}, \eqref{form314} and \eqref{form315}, one has
\begin{align*}
&\mu\big(|\Omega_{\bs{j}}^{(4)}(\varepsilon^{2}I)|\leq\gamma'\big)
\\\leq&\lim_{M\to\infty}\frac{\prod_{a\neq\tilde{a},0\neq|a|\leq M}(\int_{0}^{+\infty} e^{-x_{a}}dx_{a})\int_{A_2-\frac{\gamma|\tilde{a}|^{2}}{(2N)^{12l}|\varphi'(0)|}}^{A_2+\frac{\gamma|\tilde{a}|^{2}}{(2N)^{12l}|\varphi'(0)|}}dx_{\tilde{a}}}{A_1\prod_{0\neq|a|\leq M}\int_{0}^{A_1\sqrt{|a|}}e^{-x_a}dx_{a}}
\\\leq&\lim_{M\to\infty}\frac{2\gamma(j_{2l}^{*})^{2}(2N)^{-12l}|\varphi'(0)|^{-1}}{A_1\prod_{0\neq|a|\leq M}(1-e^{-A_1\sqrt{|a|}})}
\\\leq&\frac{\gamma(2N)^{2-12l}}{2|\varphi'(0)|A_1\prod_{a\in\mb{Z}\backslash\{0\}}(1-e^{-A_1\sqrt{|a|}})}.
\end{align*}
Hence, by the fact $4N+2\leq(2N)^{2}$, one has
\begin{align*}
\mu(\Theta^{(1)})\leq&\sum_{l=3}^{r}\sum_{\substack{\bs{j}\in Irr(\mc{R})\\j_{1}^{*}\leq N,\#\bs{j}=2l}}\frac{\gamma(2N)^{2-12l}}{2|\varphi'(0)|A_1\prod_{a\in\mb{Z}\backslash\{0\}}(1-e^{-A_1\sqrt{|a|}})}
\\\leq&(r-2)4(4N+2)^{2l-2}\frac{\gamma(2N)^{2-12l}}{2|\varphi'(0)|A_1\prod_{a\in\mb{Z}\backslash\{0\}}(1-e^{-A_1\sqrt{|a|}})}
\\\leq&\lambda_{1}\gamma
\end{align*}
with the constant $\lambda_1$ depending on $\varphi'(0)$ and $A_1$. Then we can draw the conclusion.
\end{proof}
For convenience, consider $\tilde{\Omega}_{\bs{j}}^{(6)}(I):=\Omega_{\bs{j}}^{(4)}(I)+\tilde{\Omega}_{\bs{j}}^{(6,6)}(I)$ with
\begin{align}
\label{form316}
\tilde{\Omega}_{\bs{j}}^{(6,6)}(I)=-\frac{(\varphi'(0))^{2}}{2}\sum_{i=1}^{2l}\delta_{i}\sum_{\substack{a\in\mb{Z}\\a\neq a_1,\cdots,a_{2l}}}\frac{I_{a}^{2}}{(a-a_i)^{2}}.
\end{align}
\begin{lemma}
\label{le34}
Fix $\rho>0$, $\theta\in(0,1)$, $r\geq3$, $N\geq12r$ and $\gamma\in(0,1)$. Then there exists a positive constant $\lambda_{2}$ depending on $\varphi'(0)$ such that
\begin{equation}
\label{form317}
\mu(\forall\bs{j}\in Irr(\mc{R})\;\text{with}\;j_{1}^{*}\leq N\;\text{and}\;\#\bs{j}=2l\leq2r,\;|\tilde{\Omega}_{\bs{j}}^{(6)}(\varepsilon^{2}I)|\geq\gamma'')>1-e^{2\rho(3r)^{\theta}}\lambda_{2}\gamma,
\end{equation}
where $\gamma''=2\gamma\varepsilon^{2}(2N)^{-12l}\max\{e^{-2\rho|j^{*}_{2l}|^{\theta}},2\gamma\varepsilon^{2}\}$.
\end{lemma}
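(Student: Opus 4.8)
The plan is to mirror the proof of Lemma~\ref{le33}, now bounding the measure of the complementary bad set
\[
\Theta^{(2)}=\bigl\{\,\exists\,\bs{j}\in Irr(\mc{R})\text{ with }j_{1}^{*}\leq N,\ \#\bs{j}=2l\leq 2r,\ |\tilde{\Omega}_{\bs{j}}^{(6)}(\varepsilon^{2}I)|\leq\gamma''\,\bigr\}.
\]
As there, for a fixed admissible $\bs{j}$ one passes to the variables $x_{a}=e^{2\rho|a|^{\theta}}|a|^{2}I_{a}$ ($a\neq0$), bounds the corresponding slice of $\Theta^{(2)}$ by integrating out one well-chosen action variable (the Gaussian density being $\leq1$), and inserts the same lower bound for the denominator of \eqref{form38} already established in Lemma~\ref{le33}; so it suffices to produce, for each $\bs{j}$, one variable $I_{c}$ along whose line $\tilde{\Omega}_{\bs{j}}^{(6)}(\varepsilon^{2}I)$ is quantitatively transversal. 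Which $c$ to take is dictated by which term realizes the maximum in $\gamma''=2\gamma\varepsilon^{2}(2N)^{-12l}\max\{e^{-2\rho|j_{2l}^{*}|^{\theta}},2\gamma\varepsilon^{2}\}$, so I split into two cases.

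\emph{Case 1: $e^{-2\rho|j_{2l}^{*}|^{\theta}}\geq 2\gamma\varepsilon^{2}$.} Take $c=\tilde{a}$ with $|\tilde{a}|=j_{2l}^{*}$. Because $\bs{j}\in Irr(\mc{R})$, all entries $a_{i}$ equal to $\tilde{a}$ carry a common sign, so the coefficient of $I_{\tilde{a}}$ in $\Omega_{\bs{j}}^{(4)}(\varepsilon^{2}I)$ has modulus $\geq|\varphi'(0)|\varepsilon^{2}$; and $\tilde{\Omega}_{\bs{j}}^{(6,6)}$ is independent of $I_{\tilde{a}}$ since the inner sum in \eqref{form316} omits $a_{1},\dots,a_{2l}$. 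Hence $\tilde{\Omega}_{\bs{j}}^{(6)}(\varepsilon^{2}I)$ is affine in $I_{\tilde{a}}$ with slope of modulus $\geq|\varphi'(0)|\varepsilon^{2}$, its $\gamma''$-sublevel slice is an interval of $I_{\tilde{a}}$-length $\leq 2\gamma''/(|\varphi'(0)|\varepsilon^{2})$, and in the $x_{\tilde{a}}$-variable this is $\leq 4\gamma(j_{2l}^{*})^{2}(2N)^{-12l}/|\varphi'(0)|\leq 4\gamma(2N)^{2-12l}/|\varphi'(0)|$ — identical to the bound in Lemma~\ref{le33} (the trivial modification when $\tilde a=0$ is as there).

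\emph{Case 2: $e^{-2\rho|j_{2l}^{*}|^{\theta}}<2\gamma\varepsilon^{2}$, so $\gamma''=4\gamma^{2}\varepsilon^{4}(2N)^{-12l}$.} Here the linear part is too small; I integrate out $I_{c}$ for a suitable $c$ with $0<|c|\leq 3r$ \emph{outside} the support $\{a_{1},\dots,a_{2l}\}$, so that $\Omega^{(4)}$ ignores $I_{c}$ and the $I_{c}$-dependence of $\tilde{\Omega}_{\bs{j}}^{(6,6)}$ comes solely from the term $a=c$ in \eqref{form316}; thus
\[
\tilde{\Omega}_{\bs{j}}^{(6)}(\varepsilon^{2}I)=-\tfrac{(\varphi'(0))^{2}}{2}\,\varepsilon^{4}f_{\bs{j}}(c)\,I_{c}^{2}+(\text{independent of }I_{c}),\qquad f_{\bs{j}}(c):=\sum_{i=1}^{2l}\frac{\delta_{i}}{(c-a_{i})^{2}}.
\]
Writing $f_{\bs{j}}(c)=Q_{\bs{j}}(c)\big/\prod_{m}(c-b_{m})^{2}$ over the distinct values $b_{m}$ of $\{a_{i}\}$ with signed multiplicities $n_{m}$ (nonzero, since $\bs{j}\in Irr$), the numerator $Q_{\bs{j}}$ has \emph{integer} coefficients, degree $\leq 4l-2$, and does not vanish identically (it is nonzero at each $b_{m}$). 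Since there are at most $2l\leq 2r$ distinct values and at most $4l-2\leq 4r-2$ zeros of $Q_{\bs{j}}$, among the $6r$ nonzero integers of $[-3r,3r]$ at least two lie outside the support and outside the zero set of $Q_{\bs{j}}$; fix one such $c^{*}$. For it $|c^{*}-a_{i}|\leq 3r+N\leq 2N$ and $|Q_{\bs{j}}(c^{*})|\geq1$, whence $|f_{\bs{j}}(c^{*})|\geq(2N)^{-4l}$. As $I_{c^{*}}>0$, the $\gamma''$-sublevel slice is an interval of $I_{c^{*}}$-length $\leq\bigl(4\gamma''/((\varphi'(0))^{2}\varepsilon^{4}|f_{\bs{j}}(c^{*})|)\bigr)^{1/2}\leq 4\gamma(2N)^{-4l}/|\varphi'(0)|$, i.e. in $x_{c^{*}}$ of length $\leq 4\gamma e^{2\rho(3r)^{\theta}}(2N)^{2-4l}/|\varphi'(0)|$ (using $|c^{*}|\leq 3r\leq N$); this is the only place where the factor $e^{2\rho(3r)^{\theta}}$ enters.

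In either case the $\mu$-measure of the $\bs{j}$-slice of $\Theta^{(2)}$ is $\lesssim_{\varphi'(0)}\gamma e^{2\rho(3r)^{\theta}}(2N)^{2-4l}$, after absorbing the denominator constants of \eqref{form38} exactly as in Lemma~\ref{le33}. Since the number of admissible $\bs{j}$ with $\#\bs{j}=2l$ is at most $(4N+2)^{2l}$ (in fact fewer, as in Lemma~\ref{le33}) and $\sum_{l\geq3}(4N+2)^{2l}(2N)^{2-4l}\lesssim1$ for $N\geq12r$, summing over $3\leq l\leq r$ gives $\mu(\Theta^{(2)})\leq e^{2\rho(3r)^{\theta}}\lambda_{2}\gamma$ for a suitable $\lambda_{2}=\lambda_{2}(\varphi'(0))$. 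The main obstacle is the quantitative transversality in Case~2: one must exclude that $f_{\bs{j}}$ is too small at \emph{every} admissible small index $c$, and this is exactly where the integrality of the coefficients of $Q_{\bs{j}}$ (forcing $|Q_{\bs{j}}(c^{*})|\geq1$), together with the bound on its number of zeros, is used.
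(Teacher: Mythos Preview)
Your proof is correct and follows the same two-estimate strategy as the paper: transversality from the linear $\Omega^{(4)}$-part at the smallest index (your Case~1, the paper's \eqref{form319}) and from the quadratic $\tilde\Omega^{(6,6)}$-part at a small index outside the support (your Case~2, the paper's \eqref{form320}--\eqref{form323}). The differences are only in packaging: the paper computes both bounds for every $\bs{j}$ and takes their minimum in \eqref{form324} (further splitting into $|Q_{\bs j}|\lessgtr 3\gamma''$), whereas you split a priori by which term realizes the $\max$ in $\gamma''$ and use the uniform inequality $\sqrt b-\sqrt a\le\sqrt{b-a}$ to bound the quadratic slice in one stroke; and for the lower bound on $|f_{\bs j}(c^{*})|$ the paper invokes Lemma~\ref{le03} (Lemma~5.11 of \cite{BFG20b}) while you give an equivalent self-contained argument via the integrality of the numerator polynomial $Q_{\bs j}$.
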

\begin{proof}
Consider the complementary
$$\Theta^{(2)}=\{\exists \bs{j}\in Irr(\mc{R})\;\text{with}\;j_{1}^{*}\leq N\;\text{and}\;\#\bs{j}=2l\leq2r\;\text{such that}\;|\tilde{\Omega}_{\bs{j}}^{(6)}(\varepsilon^{2}I)|\leq\gamma''\}.$$
Let $x_{a}=e^{2\rho|a|^{\theta}}|a|^{2}I_{a}$ for $a\neq0$, and then
\begin{align}
\label{form318}
&\mu\big(|\tilde{\Omega}_{\bs{j}}^{(6)}(\varepsilon^{2}I)|\leq\gamma''\big)
\\\notag=&\lim_{M\to\infty}\frac{\int_{\sum_{|a|\leq M}e^{2\rho|a|^{\theta}}I_{a}<1,|\tilde{\Omega}_{\bs{j}}^{(6)}(\varepsilon^{2}I)|\leq\gamma''}e^{-\sum_{|a|\leq M}e^{2\rho|a|^{\theta}}|a|^{2}I_{a}}\prod_{|a|\leq M}dI_{a}}{\int_{\sum_{|a|\leq M}e^{2\rho|a|^{\theta}}I_{a}<1} e^{-\sum_{|a|\leq M}e^{2\rho|a|^{\theta}}|a|^{2}I_{a}}\prod_{|a|\leq M}dI_{a}}
\\\notag=&\lim_{M\to\infty}\frac{\int_{I_0+\sum_{0\neq|a|\leq M}|a|^{-2}x_{a}<1,|\tilde{\Omega}_{\bs{j}}^{(6)}(\varepsilon^{2}I)|\leq\gamma''}e^{-\sum_{0\neq|a|\leq M}x_{a}}dI_0\prod_{0\neq|a|\leq M}dx_{a}}{\int_{I_0+\sum_{0\neq|a|\leq M}|a|^{-2}x_{a}<1} e^{-\sum_{0\neq|a|\leq M}x_{a}}dI_0\prod_{0\neq|a|\leq M}dx_{a}}.
\end{align}
Now, we are going to estimate \eqref{form318} in the following two aspects.
\\\indent Firstly, notice that for any $\bs{j}=(\delta_{i},a_{i})_{i=1}^{2l}\in Irr(\mc{R})$, $\tilde{\Omega}_{\bs{j}}^{(6)}(\varepsilon^{2}I)-\Omega_{\bs{j}}^{(4)}(\varepsilon^{2}I)$ is independent of $I_{a_{i}}$. Similarly with the proof of \Cref{le33}, one has
\begin{equation}
\label{form319}
\mu\big(|\tilde{\Omega}_{\bs{j}}^{(6)}(\varepsilon^{2}I)|\leq\gamma''\big)\leq\frac{2\gamma''e^{2\rho|j^{*}_{2l}|^{\theta}}|j_{2l}^{*}|^{2}}{\varepsilon^{2}|\varphi'(0)|A_1\prod_{a\in\mb{Z}\backslash\{0\}}(1-e^{-A_1\sqrt{|a|}})}
\end{equation}
with $A_1=(1+\sum_{a\in\mb{Z}\backslash\{0\}}|a|^{-\frac{3}{2}})^{-1}$.
\\\indent Next, by \Cref{le03} in Appendix, for any $\bs{j}=(\delta_{i},a_{i})_{i=1}^{2l}\in Irr(\mc{R})$ with $j_{1}^{*}\leq N$, there exists an integer $\tilde{a}\in(-3l,3l)\backslash\{a_{1},\cdots,a_{2l}\}$ such that
$$\Big|\sum_{i=1}^{2l}\frac{\delta_{i}}{(\tilde{a}-a_{i})^{2}}\Big|\geq\frac{1}{(6lN)^{4l}}\geq \frac{2^{4l}}{N^{8l}}.$$
Then rewrite
$\tilde{\Omega}_{\bs{j}}^{(6)}(\varepsilon^{2}I)=(d_{\bs{j}})_{\tilde{a}}\varepsilon^{4}I_{\tilde{a}}^{2}+Q_{\bs{j}}((\varepsilon^{2}I_{b})_{b\neq\tilde{a}})$, where
\begin{equation}
\label{form320}
|(d_{\bs{j}})_{\tilde{a}}|=\Big|-\frac{(\varphi'(0))^{2}}{2}\sum_{i=1}^{2l}\frac{\delta_{i}}{(\tilde{a}-a_{i})^{2}}\Big|\geq\frac{2^{4l}(\varphi'(0))^{2}}{2N^{8l}}.
\end{equation}
Notice that
$$\big||(d_{\bs{j}})_{\tilde{a}}|\varepsilon^{4}I_{\tilde{a}}^{2}-|Q_{\bs{j}}((\varepsilon^{2}I_{b})_{b\neq\tilde{a}})|\big|\leq|\tilde{\Omega}_{\bs{j}}^{(6)}(\varepsilon^{2}I)|\leq\gamma''.$$
When $|Q_{\bs{j}}((\varepsilon^{2}I_{b})_{b\neq\tilde{a}})|<3\gamma''$, one has
$$|x_{\tilde{a}}|\leq\frac{2e^{2\rho|\tilde{a}|^{\theta}}|\tilde{a}|^{2}}{\varepsilon^{2}}\sqrt{\frac{\gamma''}{|(d_{\bs{j}})_{\tilde{a}}|}}.$$
 By \eqref{form313} and \eqref{form318}, one has
\begin{align}
\label{form321}
\mu\big(|\tilde{\Omega}_{\bs{j}}^{(6)}(\varepsilon^{2}I)|\leq\gamma''\big)
&\leq\lim_{M\to\infty}\frac{\prod_{a\neq\tilde{a},0\neq|a|\leq M}(\int_{0}^{+\infty} e^{-x_{a}}dx_{a})\int_{-\frac{2e^{2\rho|\tilde{a}|^{\theta}}|\tilde{a}|^{2}}{\varepsilon^{2}}\sqrt{\frac{\gamma''}{|(d_{\bs{j}})_{\tilde{a}}|}}}^{\frac{2e^{2\rho|\tilde{a}|^{\theta}}|\tilde{a}|^{2}}{\varepsilon^{2}}\sqrt{\frac{\gamma''}{|(d_{\bs{j}})_{\tilde{a}}|}}}dx_{\tilde{a}}}{A_1\prod_{0\neq|a|\leq M}\int_{0}^{A_1\sqrt{|a|}}e^{-x_a}dx_{a}}
\\\notag&\leq\frac{4e^{2\rho|\tilde{a}|^{\theta}}|\tilde{a}|^{2}}{\varepsilon^{2}A_1\prod_{a\in\mb{Z}\backslash\{0\}}(1-e^{-A_1\sqrt{|a|}})}\sqrt{\frac{\gamma''}{|(d_{\bs{j}})_{\tilde{a}}|}}.
\end{align}
When $|Q_{\bs{j}}((\varepsilon^{2}I_{b})_{b\neq\tilde{a}})|\geq3\gamma''$, one has
$$\frac{e^{2\rho|\tilde{a}|^{\theta}}|\tilde{a}|^{2}}{\varepsilon^{2}}\sqrt{\frac{|Q_{\bs{j}}((\varepsilon^{2}I_{b})_{b\neq\tilde{a}})|-\gamma''}{|(d_{\bs{j}})_{\tilde{a}}|}}\leq x_{\tilde{a}}\leq\frac{e^{2\rho|\tilde{a}|^{\theta}}|\tilde{a}|^{2}}{\varepsilon^{2}}\sqrt{\frac{|Q_{\bs{j}}((\varepsilon^{2}I_{b})_{b\neq\tilde{a}})|+\gamma''}{|(d_{\bs{j}})_{\tilde{a}}|}}.$$
By \eqref{form313} and \eqref{form318}, one has
\begin{align}
\label{form322}
&\mu\big(|\tilde{\Omega}_{\bs{j}}^{(6)}(\varepsilon^{2}I)|\leq\gamma''\big)
\\\notag\leq&\lim_{M\to\infty}\frac{\prod_{a\neq\tilde{a},0\neq|a|\leq M}(\int_{0}^{+\infty} e^{-x_{a}}dx_{a})\int_{\frac{e^{2\rho|\tilde{a}|^{\theta}}|\tilde{a}|^{2}}{\varepsilon^{2}}\sqrt{\frac{|Q_{\bs{j}}((I_{b})_{b\neq\tilde{a}})|-\gamma''}{|(d_{\bs{j}})_{\tilde{a}}|}}}^{\frac{e^{2\rho|\tilde{a}|^{\theta}}|\tilde{a}|^{2}}{\varepsilon^{2}}\sqrt{\frac{|Q_{\bs{j}}((I_{b})_{b\neq\tilde{a}})|+\gamma''}{|(d_{\bs{j}})_{\tilde{a}}|}}}dx_{\tilde{a}}}{A_1\prod_{0\neq|a|\leq M}\int_{0}^{A_1\sqrt{|a|}}e^{-x_a}dx_{a}}
\\\notag\leq&\frac{e^{2\rho|\tilde{a}|^{\theta}}|\tilde{a}|^{2}\big(\sqrt{|Q_{\bs{j}}((I_{b})_{b\neq\tilde{a}})|+\gamma''}-\sqrt{|Q_{\bs{j}}((I_{b})_{b\neq\tilde{a}})|-\gamma''}\big)}{\varepsilon^{2}A_1\prod_{a\in\mb{Z}\backslash\{0\}}(1-e^{-A_1\sqrt{|a|}})\sqrt{|(d_{\bs{j}})_{\tilde{a}}|}}
\\\notag\leq&\frac{e^{2\rho|\tilde{a}|^{\theta}}|\tilde{a}|^{2}}{\varepsilon^{2}A_1\prod_{a\in\mb{Z}\backslash\{0\}}(1-e^{-A_1\sqrt{|a|}})\sqrt{|(d_{\bs{j}})_{\tilde{a}}|}}\frac{2\gamma''}{\sqrt{|Q_{\bs{j}}((I_{b})_{b\neq\tilde{a}})|-\gamma''}}
\\\notag\leq&\frac{e^{2\rho|\tilde{a}|^{\theta}}|\tilde{a}|^{2}}{\varepsilon^{2}A_1\prod_{a\in\mb{Z}\backslash\{0\}}(1-e^{-A_1\sqrt{|a|}})}\sqrt{\frac{2\gamma''}{|(d_{\bs{j}})_{\tilde{a}}|}}.
\end{align}
By \eqref{form320}--\eqref{form322} and $N\geq6r$, one has
\begin{align}
\label{form323}
\mu\big(|\tilde{\Omega}_{\bs{j}}^{(6)}(\varepsilon^{2}I)|\leq\gamma'\big)\leq&\frac{4e^{2\rho|\tilde{a}|^{\theta}}|\tilde{a}|^{2}}{\varepsilon^{2}A_1\prod_{a\in\mb{Z}\backslash\{0\}}(1-e^{-A_1\sqrt{|a|}})}\sqrt{\frac{\gamma''}{|(d_{\bs{j}})_{\tilde{a}}|}}
\\\notag\leq&\frac{4e^{2\rho(3l)^{\theta}}(3l)^{2}}{\varepsilon^{2}A_1\prod_{a\in\mb{Z}\backslash\{0\}}(1-e^{-A_1\sqrt{|a|}})}\frac{\sqrt{2\gamma''}N^{4l}}{2^{2l}|\varphi'(0)|}
\\\notag\leq&\frac{\sqrt{2\gamma''}e^{2\rho(3l)^{\theta}}N^{4l+2}}{2^{2l+2}\varepsilon^{2}|\varphi'(0)|A_1\prod_{a\in\mb{Z}\backslash\{0\}}(1-e^{-A_1\sqrt{|a|}})}.
\end{align}
\indent To sum up, by estimates \eqref{form319} and \eqref{form323}, for any $\bs{j}\in Irr(\mc{R})$ with $j_{1}^{*}\leq N$ and $\#\bs{j}=2l$, one has
\begin{align}
\label{form324}
\mu\big(|\tilde{\Omega}_{\bs{j}}^{(6)}(\varepsilon^{2}I)|\leq\gamma''\big)\leq&\frac{2\min\{\gamma''e^{2\rho|j^{*}_{2l}|^{\theta}}|j_{2l}^{*}|^{2},\sqrt{\gamma''}e^{2\rho(3l)^{\theta}}2^{-2l-2}N^{4l+2}\}}{\varepsilon^{2}|\varphi'(0)|A_1\prod_{a\in\mb{Z}\backslash\{0\}}(1-e^{-A_1\sqrt{|a|}})}
\\\notag\leq&\frac{\gamma e^{2\rho(3l)^{\theta}}}{|\varphi'(0)|A_1\prod_{a\in\mb{Z}\backslash\{0\}}(1-e^{-A_1\sqrt{|a|}})2^{8l}N^{2l-2}}.
\end{align}
Hence, by the fact $4N+2\leq2^{3}N$, one has
\begin{align}
\label{form325}
\mu(\Theta^{(2)})
&\leq\sum_{l=3}^{r}\sum_{\substack{\bs{j}\in Irr(\mc{R})\\j_{1}^{*}\leq N,\#\bs{j}=2l}}\frac{\gamma e^{2\rho(3l)^{\theta}}}{|\varphi'(0)|A_1\prod_{a\in\mb{Z}\backslash\{0\}}(1-e^{-A_1\sqrt{|a|}})2^{8l}N^{2l-2}}
\\\notag&\leq\sum_{l=3}^{r}4(4N+2)^{2l-2}\frac{\gamma e^{2\rho(3l)^{\theta}}}{|\varphi'(0)|A_1\prod_{a\in\mb{Z}\backslash\{0\}}(1-e^{-A_1\sqrt{|a|}})2^{8l}N^{2l-2}}
\\\notag&\leq e^{2\rho(3r)^{\theta}}\lambda_2\gamma
\end{align}
with the constant $\lambda_2$ depending on $\varphi'(0)$ and $A_1$.
Hence, we can draw the conclusion.
\end{proof}
\begin{proof}[Proof of Lemma {\sl\ref{le32}}]
Consider the event $\Theta$ defined by
\begin{align*}
\{\forall\bs{j}\in\mc{R}_{2l}\backslash\mc{I}_{2l}\;\text{with}\;3\leq l\leq r\;\text{and}\;j_{1}^{*}\leq N,\;\;|\Omega_{\bs{j}}^{(4)}(\varepsilon^{2}I)|>\tilde{\gamma}'\;\text{and}\;|\tilde{\Omega}_{\bs{j}}^{(6)}(\varepsilon^{2}I)|>\tilde{\gamma}''\}
\end{align*}
with
$$\tilde{\gamma}'=\gamma\varepsilon^{2}\|z\|_{\rho,\theta}^{2}(2N)^{-12l}e^{-2\rho|j^{*}_{2l}|^{\theta}},\;\tilde{\gamma}''=2\gamma\varepsilon^{2}\|z\|_{\rho,\theta}^{2}(2N)^{-12l}\max\{e^{-2\rho|j^{*}_{2l}|^{\theta}},2\gamma\varepsilon^{2}\|z\|_{\rho,\theta}^{2}\}.$$
By \Cref{le33}, \Cref{le34} and the fact $\gamma'\geq\tilde{\gamma}'$, $\gamma''\geq\tilde{\gamma}''$ for $z\in B_{\rho,\theta}(1)$, there exists $\lambda$ depending on $\varphi'(0)$ and $A_1$ such that
\begin{equation}
\label{form326}
\mu(\Theta)\geq1-e^{2\rho(3r)^{\theta}}\lambda\gamma.
\end{equation}
\indent Now, we will show that when the event $\Theta$ holds, one has $\varepsilon z\in\mc{U}_{\gamma}^{N}$, that is to say, we need prove that when $|\tilde{\Omega}_{\bs{j}}^{(6)}(\varepsilon^{2}I)|>\tilde{\gamma}''$, one has $|\Omega_{\bs{j}}^{(6)}(\varepsilon^{2}I)|>\tilde{\gamma}$. In view of \eqref{form32} and \eqref{form316}, there exists a positive constant $C$ depending on $\varphi'(0)$ and $\varphi''(0)$ such that
\begin{align*}
|\Omega_{\bs{j}}^{(6)}(\varepsilon^{2}I)-\tilde{\Omega}_{\bs{j}}^{(6)}(\varepsilon^{2}I)|\leq&Cl\varepsilon^{4}\|z\|_{\rho,\theta}^{4}e^{-2\rho|j_{2l}^{*}|^{\theta}}
\\\leq&\gamma \varepsilon^{2}\|z\|_{\rho,\theta}^{2}(2N)^{-12l}e^{-2\rho|j_{2l}^{*}|^{\theta}},
\end{align*}
where the last inequality follows from the condition \eqref{form39}.
Then
\begin{align*}
|\Omega_{\bs{j}}^{(6)}(\varepsilon^{2}I)|\geq&|\tilde{\Omega}_{\bs{j}}^{(6)}(\varepsilon^{2}I)|-|\Omega_{\bs{j}}^{(6)}(\varepsilon^{2}I)-\tilde{\Omega}_{\bs{j}}^{(6)}(\varepsilon^{2}I)|
\\>&\gamma\varepsilon^{2}\|z\|_{\rho,\theta}^{2}(2N)^{-12l}\max\{e^{-2\rho|j_{2l}^{*}|^{\theta}},\gamma\varepsilon^{2}\|z\|_{\rho,\theta}^{2}\}.
\end{align*}
Hence, we can draw the conclusion.
\end{proof}

\section{Proof of \Cref{th11}}
\label{sec7}
Applying \Cref{th21}, \Cref{le31}, \Cref{th51} and \Cref{th52}, we are going to prove \Cref{th11} in this section.
\\\indent Consider Hamiltonian function \eqref{form212} and fix $\rho>0$, $\theta\in(0,1)$, $\beta\in(0,1)$. Let $r=[|\ln\varepsilon|^{\beta}]$, $N=|\ln\varepsilon|^{\frac{5}{\theta}}$, $\gamma=\varepsilon^{\frac{1}{7}}$ and the open set
\begin{equation}
\label{form61}
\mc{V}_{\rho,\theta,\beta}:=\bigcup_{0<\varepsilon\leq\varepsilon_{0}}\Big(\mc{U}_{\gamma}^{N}\bigcap\big(B_{\rho,\theta}(\varepsilon)\backslash \overline{B_{\rho,\theta}(\frac{\varepsilon}{2})}\big)\Big),
\end{equation}
where $\overline{B_{\rho,\theta}(\frac{\varepsilon}{2})}$ is the closure of $B_{\rho,\theta}(\frac{\varepsilon}{2})$.
\\\indent Notice that $\varepsilon^{2}\lesssim\frac{\gamma}{r(2N)^{12r}}$ and then by \Cref{le32}, there exists a positive constant $\lambda$ such that
\begin{equation}
\label{form62}
\mu(\varepsilon z(0)\in\mc{U}_{\gamma}^{N})\geq1-e^{2\rho(3r)^{\theta}}\lambda\gamma>1-\varepsilon^{\frac{1}{8}}.
\end{equation}
By the construction of $\mc{V}_{\rho,\theta,\beta}$, one has
\begin{equation}
\label{form63}
\mu(\varepsilon z(0)\in\mc{V}_{\rho,\theta,\beta})\geq1-\varepsilon^{\frac{1}{8}}.
\end{equation}
\indent Notice that the conditions $N\gtrsim r$, $\varepsilon\lesssim_{\rho,\theta}r^{-\frac{3}{2}}$ and $\varepsilon\lesssim\gamma^{\frac{7}{2}}(C_4N)^{-\frac{341}{2}r}$ hold. Hence, by \Cref{th21}, \Cref{le31}, \Cref{th51} and \Cref{th52}, there exists a canonical transformation $\phi=\phi^{(1)}\circ\Phi_{\chi'_{6}}^{1}\circ\phi^{(2)}$ such that  $z'=\phi^{-1}(z)$ with $z'\in\mc{U}_{\frac{\gamma}{2}}^{N}$ satisfies the following estimates
\begin{align}
\label{form64}
&\sup_{\|z\|_{\rho,\theta}\leq 3\varepsilon/2}\|z-\phi^{-1}(z)\|_{\rho,\theta}\lesssim\frac{32^{r}(C_5N)^{377}}{\gamma^{5}}\|z\|_{\rho,\theta}^{3},
\\\label{form65}
&\sup_{\|z'\|_{\rho,\theta}\leq 3\varepsilon/2}\|z'-\phi(z')\|_{\rho,\theta}\lesssim\frac{32^{r}(C_5N)^{377}}{\gamma^{5}}\|z'\|_{\rho,\theta}^{3},
\end{align}
 and then
\begin{equation}
\label{form66}
\|z'(0)\|_{\rho,\theta}\leq\|z(0)\|_{\rho,\theta}+\|z(0)-\phi^{-1}(z(0))\|_{\rho,\theta}\leq\frac{5}{4}\varepsilon.
\end{equation}
Define $F(z)=\|z\|_{\rho,\theta}^{2}$ and denote by $T$ the escape time of $z'$ from $B_{\rho,\theta}(\frac{3}{2}\varepsilon)$. For any $|t|\leq T$, one has
\begin{align}
\label{form67}
&\;|F(z'(t))-F(z'(0))|=\left|\int_{0}^{t}\{H\circ\phi,F\}(z'(t))dt\right|
\\\notag=&\left|\int_{0}^{t}\{R''_{\geq2r+2}+\big(\sum_{l=3}^{r}R_{2l}+\ms{R}+R_{\geq2r+2}\circ\phi^{(1)}\big)\circ\Phi_{\chi'_{6}}^{1}\circ\phi^{(2)},F\}(z'(t))dt\right|
\\\notag\leq&T\sup_{|t|\leq T}\Big(\|X_{R''_{\geq2r+4}}(z'(t))\|_{\rho,\theta}+\sum_{l=3}^{r}\|X_{R_{2l}\circ\Phi_{\chi'_{6}}^{1}\circ\phi^{(2)}}(z'(t))\|_{\rho,\theta}
\\\notag&\qquad\qquad+\|X_{\ms{R}\circ\Phi_{\chi'_{6}}^{1}\circ\phi^{(2)}}(z'(t))\|_{\rho,\theta}+\|X_{R_{\geq2r+2}\circ\phi}(z'(t))\|_{\rho,\theta}\Big)\|z'(t)\|_{\rho,\theta}.
\end{align}
\indent Next, we will estimate these vector fields in \eqref{form67}.
Notice that for any $\bs{j}\in\mc{R}_{2l}$ with $(Irr(\bs{j}))_{1}^{*}>N$ and $l\leq r$, one has
\begin{equation}
\label{form68}
j_{3}^{*}>\sqrt{\frac{N}{2l}}\geq\sqrt{\frac{N}{2r}}.
\end{equation}
In view of \eqref{form52}, by \Cref{le02} in Appendix and \eqref{form68}, there exits a positive constant $C$ depending on $\rho,\theta$ such that
\begin{align}
\label{form69}
\|X_{R_{2l}}(z)\|_{\rho,\theta}\leq&C^{2l-1}(8C_{0}(l-1)^{3})^{l-1}\|z\|_{\rho,\theta}\|z\|_{2^{\theta-1}\rho,\theta}^{2l-2}
\\\notag\leq&C^{2l-1}(8C_{0}(l-1)^{3})^{l-1}e^{-(1-2^{\theta-1})\rho (\frac{N}{2r})^{\frac{\theta}{2}}}\|z\|_{\rho,\theta}^{2l-1}.
\end{align}
Then there exists a positive constant $C_{9}$ depending on $C_0,C$ such that
\begin{align}
\label{form610}
&\|X_{R_{2l}\circ\Phi_{\chi'_{6}}^{1}\circ\phi^{(2)}}(z'(t))\|_{\rho,\theta}
\\\notag\lesssim&\|X_{R_{2l}}(\Phi_{\chi'_{6}}^{1}\circ\phi^{(2)}(z'(t)))\|_{\rho,\theta}
\\\notag\leq&C^{2l-1}(8C_{0}(l-1)^{3})^{l-1}e^{-(1-2^{\theta-1})\rho (\frac{N}{2r})^{\frac{\theta}{2}}}\|\Phi_{\chi'_{6}}^{1}\circ\phi^{(2)}(z'(t))\|_{\rho,\theta}^{2l-1}
\\\notag\leq&(C_{9}(l-1)^{3})^{l-1}e^{-(1-2^{\theta-1})\rho (\frac{N}{2r})^{\frac{\theta}{2}}}\|z\|_{\rho,\theta}^{2l-1}.
\end{align}
Hence, one has
\begin{equation}
\label{form611}
\sum_{l=3}^{r}\|X_{R_{2l}\circ\Phi_{\chi'_{6}}^{1}\circ\phi^{(2)}}(z'(t))\|_{\rho,\theta}\lesssim e^{-(1-2^{\theta-1})\rho (\frac{N}{2r})^{\frac{\theta}{2}}}\|z'(t)\|_{\rho,\theta}^{5}.
\end{equation}
By \eqref{form220} in \Cref{th21}, one has
\begin{align}
\label{form612}
\|X_{\ms{R}\circ\Phi_{\chi'_{6}}^{1}\circ\phi^{(2)}}(z'(t))\|_{\rho,\theta}\lesssim&\|X_{\ms{R}}(\Phi_{\chi'_{6}}^{1}\circ\phi^{(2)}(z'(t))\|_{\rho,\theta}
\\\notag\leq& (r^{3}C_1)^{r}\|\Phi_{\chi'_{6}}^{1}\circ\phi^{(2)}(z'(t))\|_{\rho,\theta}^{2r+1}
\\\notag\lesssim&(4r^{3}C_1)^{r}\|z'(t)\|_{\rho,\theta}^{2r+1}.
\end{align}
Similarly, by \eqref{form214}, one has
\begin{equation}
\label{form613}
\|X_{R_{\geq2r+2}\circ\phi}(z'(t))\|_{\rho,\theta}
\lesssim C_0^{r}\|\phi(z'(t))\|_{\rho,\theta}^{2r+1}
\lesssim(4C_0)^{r}\|z'(t)\|_{\rho,\theta}^{2r+1}.
\end{equation}
By \Cref{th52}, one has
\begin{equation}
\label{form614}
\sup_{\|z'\|_{\rho,\theta}\leq3\varepsilon/2}\|X_{R'_{\geq2r+4}}(z'(t))\|_{\rho,\theta}\lesssim\frac{(C_7N)^{341r^{2}}}{\gamma^{6r}}\|z'(t)\|_{\rho,\theta}^{2r+1}.
\end{equation}
Then by \eqref{form67} and estimates of  the vector field \eqref{form611}--\eqref{form614}, one has
\begin{align}
\label{20230510-1}
|F(z'(t))-F(z'(0))|\lesssim&T\Big(e^{-(1-2^{\theta-1})\rho(\frac{N}{2r})^{\frac{\theta}{2}}}(\frac{3}{2}\varepsilon)^{6}+(4r^{3}C_1)^{r}(\frac{3}{2}\varepsilon)^{2r+2}
\\\notag&\qquad+(4C_0)^{r}(\frac{3}{2}\varepsilon)^{2r+2}+\frac{(C_7N)^{341r^{2}}}{\gamma^{6r}}(\frac{3}{2}\varepsilon)^{2r+2}\Big).
\end{align}

\indent For any $0<\varepsilon\leq\varepsilon_0$ with the enough small $\varepsilon_0$, one has
\begin{equation}
\label{20230510-2}
e^{-(1-2^{\theta-1})\rho(\frac{N}{2r})^{\frac{\theta}{2}}}\leq\varepsilon^{|\ln\varepsilon|^{\beta}},
\end{equation}
\begin{equation}
\label{20230510-3}
\Big((4r^{3}C_1)^{r}+(4C_0)^{r}+\frac{(C_7N)^{341r^{2}}}{\gamma^{6r}}\Big)(\frac{3}{2}\varepsilon)^{2r+2}\leq\varepsilon^{3+|\ln\varepsilon|^{\beta}}.
\end{equation}
Hence, by \eqref{20230510-1}-\eqref{20230510-3}, one has
\begin{equation}
\label{form615}
|F(z'(t))-F(z'(0))|\lesssim T\varepsilon^{3+|\ln\varepsilon|^{\beta}}.
\end{equation}
If the stability time $T\leq\varepsilon^{-|\ln\varepsilon|^{\beta}}$, then by \eqref{form67} and \eqref{form615}, one has
\begin{align*}
(\frac{3}{2}\varepsilon)^{2}=\|z'(\tilde{t})\|_{\rho,\theta}^{2}=F(z'(\tilde{t}))\leq F(z'(0))+|F(z'(\tilde{t}))-F(z'(0))|
<(\frac{5}{4}\varepsilon)^{2}+\frac{1}{2}\varepsilon^{2},
\end{align*}
which is impossible. Hence, $T\geq\varepsilon^{-|\ln\varepsilon|^{\beta}}$.
By \eqref{form65}, for any $t\leq\varepsilon^{-|\ln\varepsilon|^{\beta}}$, one has
$$\|z(t)\|_{\rho,\theta}\leq\|z'(t)\|_{\rho,\theta}+\|z'(t)-\phi(z'(t))\|_{\rho,\theta}\leq2\varepsilon.$$
\indent Finally, we are going to prove the estimate \eqref{form17}. Notice that for any $a\in\mb{Z}$, one has
\begin{equation}
\label{form616}
|I_{a}(t)-I_{a}(0)|\leq|I_{a}(t)-I'_{a}(t)|+|I'_{a}(t)-I'_{a}(0)|+|I'_{a}(0)-I_{a}(0)|.
\end{equation}
Similarly to \eqref{form67}--\eqref{form615} but $e^{2\rho|a|^{\theta}}I'_{a}$ instead of $F(z')$, for any $|t|\leq\varepsilon^{-|\ln\varepsilon|^{\beta}}$, one has
\begin{equation}
\label{form617}
e^{2\rho|a|^{\theta}}|I'_{a}(t)-I'_{a}(0)|\lesssim\varepsilon^{3}.
\end{equation}
By \eqref{form64}, one has
\begin{align}
\label{form618}
e^{2\rho|a|^{\theta}}|I_{a}(t)-I'_{a}(t)|\leq\big(\|z(t)\|_{\rho,\theta}+\|z'(t)\|_{\rho,\theta}\big)\|z(t)-z'(t)\|_{\rho,\theta}
\leq\varepsilon^{3}.
\end{align}
Hence, \eqref{form17} follows from \eqref{form616}--\eqref{form618}.

\appendix
\section{}
\begin{lemma}
\label{le01}
For any $\theta\in(0,1)$ and $|j_1|\geq|j_2|\geq\cdots\geq|j_{l}|\geq0$, one has
\begin{equation}
\label{form01}
(\sum_{i=1}^{2l}|j_{i}|)^{\theta}\leq|j_{1}|^{\theta}+(2^{\theta}-1)\sum_{i=2}^{2l}|j_{i}|^{\theta}
\end{equation}
\end{lemma}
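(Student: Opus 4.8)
The plan is to prove \eqref{form01} by telescoping over partial sums, reducing the whole statement to an elementary one‑variable estimate. Write the $2l$ given reals in non‑increasing order as $|j_1|\geq|j_2|\geq\cdots\geq|j_{2l}|\geq0$, set $S_0=0$ and $S_k=|j_1|+\cdots+|j_k|$ for $k=1,\dots,2l$, so that the left‑hand side of \eqref{form01} is $S_{2l}^{\theta}$ and
\[
S_{2l}^{\theta}-|j_1|^{\theta}=S_{2l}^{\theta}-S_1^{\theta}=\sum_{k=2}^{2l}\big(S_k^{\theta}-S_{k-1}^{\theta}\big).
\]
Since $S_k=S_{k-1}+|j_k|$, it therefore suffices to establish, for each $k\geq2$, the one‑step bound $S_k^{\theta}-S_{k-1}^{\theta}\leq(2^{\theta}-1)|j_k|^{\theta}$; summing these over $k=2,\dots,2l$ yields exactly \eqref{form01}.

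The one‑step bound follows from the following claim: for $\theta\in(0,1)$ and reals $b\geq a\geq0$ one has $(a+b)^{\theta}-b^{\theta}\leq(2^{\theta}-1)a^{\theta}$. To prove it, the case $a=0$ is trivial, so assume $a>0$ and consider $f(b):=(a+b)^{\theta}-b^{\theta}$ on $[a,+\infty)$. Then $f'(b)=\theta\big((a+b)^{\theta-1}-b^{\theta-1}\big)\leq0$, because $\theta-1<0$ and $a+b\geq b>0$ make $t\mapsto t^{\theta-1}$ decreasing; hence $f$ is non‑increasing and $f(b)\leq f(a)=(2a)^{\theta}-a^{\theta}=(2^{\theta}-1)a^{\theta}$ for all $b\geq a$, which is the claim. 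Applying the claim with $a=|j_k|$ and $b=S_{k-1}$ gives the one‑step bound, once one knows $S_{k-1}\geq|j_k|$.

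This last inequality is exactly where the ordering hypothesis is used: for $k=2$ it is $S_1=|j_1|\geq|j_2|$, and for $k\geq3$ it is $S_{k-1}\geq|j_{k-1}|\geq|j_k|$. There is no genuine obstacle here — the argument is entirely elementary — the only point to keep in mind is to verify $b\geq a$ (i.e.\ $S_{k-1}\geq|j_k|$) at every step so that the monotonicity argument for the claim applies. Assembling the telescoped sum then completes the proof.
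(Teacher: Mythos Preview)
Your proof is correct. The telescoping over partial sums together with the one-variable claim $(a+b)^{\theta}-b^{\theta}\leq(2^{\theta}-1)a^{\theta}$ for $b\geq a\geq0$ is exactly the right mechanism, and you have carefully checked the hypothesis $S_{k-1}\geq|j_k|$ at each step using the ordering.

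As for comparison: the paper does not actually prove this lemma here but simply cites it as inequality~(2.9) in \cite{CLSY18}. Your argument is a clean, self-contained version of what that reference establishes, and is in fact the standard way to prove such concavity-type inequalities. So there is nothing to contrast at the level of ideas --- you have supplied the proof that the paper chose to outsource.
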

\begin{proof}
It is (2.9) in \cite{CLSY18}.
\end{proof}
\begin{lemma}
\label{le02}
For any $\rho>0$, $\theta\in(0,1)$ and homogeneous polynomial $P_{2l}[c](z)=\sum_{\bs{j}\in\mc{M}_{2l}}c_{\bs{j}}z_{\bs{j}}$ with $\|c\|_{\ell^{\infty}}<+\infty$, there exists a positive constant $C$ depending on $\rho,\theta$ such that
\begin{equation}
\label{form02}
\|X_{P_{2l}}(z)\|_{\rho,\theta}\leq C^{2l-1}\|c\|_{\ell^{\infty}}\|z\|_{\rho,\theta}\|z\|_{2^{\theta-1}\rho,\theta}^{2l-2}.
\end{equation}
\end{lemma}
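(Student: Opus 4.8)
The plan is to treat $X_{P_{2l}}$ as a symmetric $(2l-1)$-linear expression in $z$ and prove \eqref{form02} by careful bookkeeping of the Gevrey weights, the decisive ingredients being the zero-momentum constraint $\bs{j}\in\mc{M}_{2l}$ and the sharpened sub-additivity inequality \Cref{le01}. First I would unwind the vector field: since $(X_{P_{2l}})_{j}=\pm{\rm i}\,\pa_{z_{\bar j}}P_{2l}$, after symmetrizing the coefficients the $(\delta,a)$-component is, with combinatorial multiplicity at most $2l$, a sum over tuples $(\delta_i,a_i)_{i=2}^{2l}$ satisfying $\delta_1 a+\sum_{i\ge2}\delta_i a_i=0$ for a suitable sign $\delta_1$; hence $e^{\rho|a|^{\theta}}\big|(X_{P_{2l}}(z))_{(\delta,a)}\big|\lesssim l\,\|c\|_{\ell^{\infty}}\sum e^{\rho|a|^{\theta}}\prod_{i=2}^{2l}|z_{(\delta_i,a_i)}|$ over those configurations, and $\|X_{P_{2l}}(z)\|_{\rho,\theta}^{2}$ is bounded by $l^{2}\|c\|_{\ell^{\infty}}^{2}$ times the $\ell^{2}$-square of the right side in $(\delta,a)$.

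Second, I would distribute the weight $e^{\rho|a|^{\theta}}$. On the momentum set $|a|\le\sum_{i\ge2}|a_i|$, so letting $i^{*}\in\{2,\dots,2l\}$ be where $\max_{i\ge2}|a_i|$ is attained and applying \Cref{le01} to $|a_2|,\dots,|a_{2l}|$ yields $|a|^{\theta}\le|a_{i^{*}}|^{\theta}+(2^{\theta}-1)\sum_{i\ge2,\,i\ne i^{*}}|a_i|^{\theta}$, whence
$$e^{\rho|a|^{\theta}}\prod_{i=2}^{2l}|z_{(\delta_i,a_i)}|\ \le\ \big(e^{\rho|a_{i^{*}}|^{\theta}}|z_{(\delta_{i^{*}},a_{i^{*}})}|\big)\!\!\prod_{\substack{2\le i\le 2l\\ i\ne i^{*}}}\!\!\big(e^{(2^{\theta}-1)\rho|a_i|^{\theta}}|z_{(\delta_i,a_i)}|\big).$$
I would bound the configuration sum by $2l-1$ times the one with $i^{*}$ frozen at $2$ and then discard the now superfluous constraint $|a_2|\ge|a_i|$. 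Combining the sign sums into scalar sequences $\tilde W_n:=\sum_{\delta}e^{\rho|n|^{\theta}}|z_{(\delta,n)}|$ and $V_n:=\sum_{\delta}e^{(2^{\theta}-1)\rho|n|^{\theta}}|z_{(\delta,n)}|$, summing out $a_3,\dots,a_{2l}$ at fixed total momentum turns the inner sum into the $(2l-2)$-fold convolution $V^{*(2l-2)}$, and summing out $a_2$ gives $\tilde W*V^{*(2l-2)}$ evaluated at $\mp\delta_1 a$. By Young's inequality, iterated, $\|X_{P_{2l}}(z)\|_{\rho,\theta}\lesssim l^{2}\|c\|_{\ell^{\infty}}\,\|\tilde W*V^{*(2l-2)}\|_{\ell^{2}}\le l^{2}\|c\|_{\ell^{\infty}}\,\|\tilde W\|_{\ell^{2}}\,\|V\|_{\ell^{1}}^{2l-2}$, and plainly $\|\tilde W\|_{\ell^{2}}\lesssim\|z\|_{\rho,\theta}$.

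Third — the step carrying the real content, and where $\theta<1$ is genuinely used — I would estimate $\|V\|_{\ell^{1}}$. The point is the identity $2^{\theta}-1=2^{\theta-1}-(1-2^{\theta-1})$ together with $1-2^{\theta-1}>0$ for $\theta\in(0,1)$: factoring $e^{(2^{\theta}-1)\rho|n|^{\theta}}=e^{-(1-2^{\theta-1})\rho|n|^{\theta}}e^{2^{\theta-1}\rho|n|^{\theta}}$ and applying Cauchy--Schwarz,
$$\|V\|_{\ell^{1}}\ \le\ \Big(\sum_{n\in\mb{Z}}\sum_{\delta}e^{-2(1-2^{\theta-1})\rho|n|^{\theta}}\Big)^{1/2}\Big(\sum_{n\in\mb{Z}}\sum_{\delta}e^{2\cdot2^{\theta-1}\rho|n|^{\theta}}|z_{(\delta,n)}|^{2}\Big)^{1/2}\ =\ c_{\rho,\theta}\,\|z\|_{2^{\theta-1}\rho,\theta},$$
the first factor being a finite constant $c_{\rho,\theta}$ precisely because $1-2^{\theta-1}>0$. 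Feeding this back gives $\|X_{P_{2l}}(z)\|_{\rho,\theta}\lesssim l^{2}c_{\rho,\theta}^{2l-2}\|c\|_{\ell^{\infty}}\|z\|_{\rho,\theta}\|z\|_{2^{\theta-1}\rho,\theta}^{2l-2}$, and absorbing $l^{2}$ and the fixed constants into $C^{2l-1}$ (possible since $l^{2}\le2^{2l-1}$) with $C$ depending on $\rho,\theta$ yields \eqref{form02}; for $l=1$ the chain degenerates, $V^{*0}$ being the identity, and the bound is immediate. I expect the only delicate point to be exactly this exponent routing in Steps 2--3: one must send the full weight $\rho$ onto a single input and leave every other input with weight $2^{\theta}-1$, which still retains the strictly positive reserve $1-2^{\theta-1}$ that makes the convolution finite — absent the momentum constraint (hence absent \Cref{le01}), that reserve vanishes and the estimate breaks down.
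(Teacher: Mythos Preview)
Your proposal is correct and rests on the same two pillars as the paper's proof --- the sharpened sub-additivity \Cref{le01} and the strict positivity $1-2^{\theta-1}>0$ --- but you organize the argument differently. The paper first establishes a \emph{bilinear} algebra estimate
\[
\|u\ast v\|_{\rho,\theta}\ \le\ \sqrt{2}\,\tilde C\big(\|u\|_{\rho,\theta}\|v\|_{2^{\theta-1}\rho,\theta}+\|u\|_{2^{\theta-1}\rho,\theta}\|v\|_{\rho,\theta}\big)
\]
(using \Cref{le01} on pairs and H\"older with the summable factor $\sum_j e^{(2^{\theta}-2)\rho|j|^{\theta}}$), and then inducts to obtain $\|z^{\ast(2l-1)}\|_{\rho,\theta}\le(2\sqrt{2}\tilde C)^{2l-2}\|z\|_{\rho,\theta}\|z\|_{2^{\theta-1}\rho,\theta}^{2l-2}$. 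You instead apply \Cref{le01} once to the full $(2l-1)$-tuple, route the weight $\rho$ to the maximal index and $(2^{\theta}-1)\rho$ to the rest, and finish in one stroke with Young's convolution inequality and the Cauchy--Schwarz splitting $2^{\theta}-1=2^{\theta-1}-(1-2^{\theta-1})$. Your direct route avoids the induction and is arguably cleaner for this single purpose; the paper's bilinear lemma has the advantage of being a reusable tame-algebra estimate. The only cosmetic point: your sequences $\tilde W_n,V_n$ should, to match the momentum constraint exactly, be indexed by the signed mode $\delta a$ rather than by $a$ (so e.g.\ $\hat V_m=e^{(2^{\theta}-1)\rho|m|^{\theta}}(|u_m|+|\bar u_{-m}|)$), but this does not affect the $\ell^1$ and $\ell^2$ bounds you use.
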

\begin{proof}
By the definition of norm, one has
\begin{equation}
\label{form03}
\|X_{P_{2l}}(z)\|_{\rho,\theta}\leq\Big(\sum_{j_{1}}e^{2\rho|j_1|^{\theta}}\big|2l\sum_{\substack{j_{2},\cdots,j_{2l}\\\bs{j}\in\mc{M}_{2l}}}c_{\bs{j}}z_{j_{2}}\cdots z_{j_{2l}}\big|^{2}\Big)^{\frac{1}{2}}\leq2l\|c\|_{\ell^{\infty}}\|\underbrace{z\ast\cdots\ast z}_{2l-1}\|_{\rho,\theta}.
\end{equation}
\indent For any $u=\{u_{j}\}_{j\in\mb{U}_{2}\times\mb{Z}}$ and $v=\{v_{j}\}_{j\in\mb{U}_{2}\times\mb{Z}}$, by \Cref{le01} in Appendix, one has
\begin{align*}
&\|u\ast v\|_{\rho,\theta}=\Big(\sum_{j}e^{2\rho|j|^{\theta}}\big|\sum_{\mc{M}(j_{1},j_{2},j)=0}u_{j_{1}}v_{j_{2}}\big|^{2}\Big)^{\frac{1}{2}}
\\\leq&\Big(\sum_{j}\big(\sum_{\substack{\mc{M}(j_{1},j_{2},j)=0}}e^{\rho(|j_1|+|j_2|)^{\theta}}|u_{j_{1}}||v_{j_{2}}|\big)^{2}\Big)^{\frac{1}{2}}
\\\leq&\Big(\sum_{j}\big(\sum_{\substack{|j_1|\geq|j_2|\\\mc{M}(j_{1},j_{2},j)=0}}e^{\rho|j_1|^{\theta}}e^{\rho(2^{\theta}-1)|j_2|^{\theta}}|u_{j_{1}}||v_{j_{2}}|+\sum_{\substack{|j_1|<|j_2|\\\mc{M}(j_{1},j_{2},j)=0}}e^{\rho(2^{\theta}-1)|j_1|^{\theta}}e^{\rho|j_2|^{\theta}}|u_{j_{1}}||v_{j_{2}}|\big)^{2}\Big)^{\frac{1}{2}}
\\\leq&\Big(\sum_{j}\big(\sum_{\substack{\mc{M}(j_{1},j_{2},j)=0}}e^{\rho|j_1|^{\theta}}e^{\rho(2^{\theta}-1)|j_2|^{\theta}}|u_{j_{1}}||v_{j_{2}}|\big)^{2}\Big)^{\frac{1}{2}}
\\&+\Big(\sum_{j}\big(\sum_{\substack{\mc{M}(j_{1},j_{2},j)=0}}e^{\rho(2^{\theta}-1)|j_1|^{\theta}}e^{\rho|j_2|^{\theta}}|u_{j_{1}}||v_{j_{2}}|\big)^{2}\Big)^{\frac{1}{2}}.
\end{align*}
By the H\"{o}lder inequality and the fact $2^{\theta}-2<0$, there exists a constant $\tilde{C}>1$ depending on $\rho,\theta$ such that
\begin{align*}
 &\Big(\sum_{j}\big(\sum_{\mc{M}(j_{1},j_{2},j)=0}e^{\rho|j_1|^{\theta}}e^{(2^{\theta}-1)\rho|j_2|^{\theta}}|u_{j_{1}}||v_{j_{2}}|\big)^{2}\Big)^{\frac{1}{2}}
 \\\leq&\Big(\sum_{j}\big(\sum_{\mc{M}(j_{1},j_{2},j)=0}e^{(2^{\theta}-2)\rho|j_2|^{\theta}}\big)\big(\sum_{\mc{M}(j_{1},j_{2},j)=0}e^{2\rho|j_1|^{\theta}}e^{2^{\theta}\rho|j_2|^{\theta}}|u_{j_{1}}|^{2}|v_{j_{2}}|^{2}\big)\Big)^{\frac{1}{2}}
 \\\leq&\tilde{C}\Big(\sum_{j}\sum_{\mc{M}(j_{1},j_{2},j)=0}e^{2\rho|j_1|^{\theta}}e^{2^{\theta}\rho|j_2|^{\theta}}|u_{j_{1}}|^{2}|v_{j_{2}}|^{2}\Big)^{\frac{1}{2}}
 \\\leq&\sqrt{2}\tilde{C}\|u\|_{\rho,\theta}\|v\|_{2^{\theta-1}\rho,\theta}
 \end{align*}
Similarly, one has
\begin{equation*}
\Big(\sum_{j}\big(\sum_{\mc{M}(j_{1},j_{2},j)=0}e^{\rho(2^{\theta}-1)|j_1|^{\theta}}e^{\rho|j_2|^{\theta}}|u_{j_{1}}||v_{j_{2}}|\big)^{2}\Big)^{\frac{1}{2}}\leq \sqrt{2}\tilde{C}\|u\|_{2^{\theta-1}\rho,\theta}\|v\|_{\rho,\theta}.
\end{equation*}
To sum up, one has
\begin{equation}
\label{form04}
\|u\ast v\|_{\rho,\theta}\leq \sqrt{2}\tilde{C}(\|u\|_{\rho,\theta}\|v\|_{2^{\theta-1}\rho,\theta}+\|u\|_{2^{\theta-1}\rho,\theta}\|v\|_{\rho,\theta}).
\end{equation}
\indent By induction, one has
\begin{equation}
\label{form05}
\|\underbrace{z\ast\cdots\ast z}_{2l-1}\|_{\rho,\theta}\leq (2\sqrt{2}\tilde{C})^{2l-2}\|z\|_{\rho,\theta}\|z\|_{2^{\theta-1}\rho,\theta}^{2l-2}.
\end{equation}
Take $C=(2\sqrt{2}\tilde{C})^{2}$ and by \eqref{form03}, \eqref{form05}, the inequality $2l\leq C^{l}$, one has
\begin{align*}
\|X_{P_{2l}}(z)\|_{\rho,\theta}\leq&2l(2\sqrt{2}\tilde{C})^{2l-2}\|c\|_{\ell^{\infty}}\|z\|_{\rho,\theta}\|z\|_{2^{\theta-1}\rho,\theta}^{2l-2}
\\\leq&C^{2l-1}\|c\|_{\ell^{\infty}}\|z\|_{\rho,\theta}\|z\|_{2^{\theta-1}\rho,\theta}^{2l-2}.
\end{align*}
\end{proof}
\begin{lemma}
\label{le03}
For any $\bs{j}=(\delta_{i},a_{i})_{i=1}^{2l}\in\mc{R}_{2l}\backslash\mc{I}_{2l}$ with $j_{1}^{*}\leq N$, there exists an integer $\tilde{a}\in(-3l,3l)\backslash\{a_{1},\cdots,a_{2l}\}$ such that
\begin{equation}
\label{form06}
\Big|\sum_{i=1}^{2l}\frac{\delta_{i}}{(\tilde{a}-a_{i})^{2}}\Big|\geq\frac{1}{(6lN)^{4l}}.
\end{equation}
\end{lemma}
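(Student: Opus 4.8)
The plan is to prove this by contradiction, combined with a counting/pigeonhole argument. Suppose that for \emph{every} integer $\tilde a$ in the interval $(-3l,3l)$ avoiding the $a_i$'s we have $\big|\sum_{i=1}^{2l}\frac{\delta_i}{(\tilde a-a_i)^2}\big|<(6lN)^{-4l}$. The rational function $f(x):=\sum_{i=1}^{2l}\frac{\delta_i}{(x-a_i)^2}$ can be written, after clearing denominators, as $f(x)=\frac{P(x)}{\prod_{i=1}^{2l}(x-a_i)^2}$ where $P$ is a polynomial with \emph{integer} coefficients (here I use that the $a_i$ are integers and $\delta_i\in\{\pm1\}$) of degree at most $4l-2$. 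The key structural input is that $\bs j\notin\mc I_{2l}$: if $P$ were identically zero then $f\equiv0$, and one checks that $f\equiv 0$ forces the multiset $\{(\delta_i,a_i)\}$ to pair up into cancelling pairs $(\delta,a),(-\delta,a)$, i.e. $\bs j\in\mc I_{2l}$, contradiction. Hence $P\not\equiv0$, so $P$ has at most $4l-2$ integer roots.

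Next I would count available evaluation points. The interval $(-3l,3l)$ contains $6l-1$ integers; at most $2l$ of them coincide with some $a_i$ (there are $\le 2l$ distinct values among $a_1,\dots,a_{2l}$), and at most $4l-2$ more are roots of $P$. Since $6l-1-2l-(4l-2)=1>0$, there exists at least one integer $\tilde a\in(-3l,3l)$ with $\tilde a\ne a_i$ for all $i$ and $P(\tilde a)\ne0$. For this $\tilde a$, $P(\tilde a)$ is a nonzero integer, so $|P(\tilde a)|\ge1$, whence
\begin{equation*}
\Big|\sum_{i=1}^{2l}\frac{\delta_i}{(\tilde a-a_i)^2}\Big|=\frac{|P(\tilde a)|}{\prod_{i=1}^{2l}(\tilde a-a_i)^2}\ge\frac{1}{\prod_{i=1}^{2l}|\tilde a-a_i|^2}.
\end{equation*}
Finally I bound the denominator: $|\tilde a-a_i|\le|\tilde a|+|a_i|<3l+N\le 3lN$ (using $|a_i|\le j_1^*\le N$ and $l,N\ge1$), so $\prod_{i=1}^{2l}|\tilde a-a_i|^2\le (3lN)^{4l}\le(6lN)^{4l}$, which gives the claimed lower bound $(6lN)^{-4l}$ and contradicts the assumption.

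The main obstacle I anticipate is making precise the implication ``$f\equiv0\Rightarrow\bs j\in\mc I_{2l}$'', i.e. that the only way $\sum_i \delta_i/(x-a_i)^2$ vanishes identically is by cancellation in pairs. This is a partial-fractions uniqueness argument: the coefficient of $(x-a)^{-2}$ in the partial fraction decomposition of $f$ is $\sum_{i:\,a_i=a}\delta_i$, and these must all vanish; since for each value $a$ the terms with $a_i=a$ have $\delta_i\in\{\pm1\}$, vanishing of the sum means equal numbers of $+1$ and $-1$, which is exactly the permutation-pairing condition defining $\mc I_{2l}$. A minor secondary point is checking $\deg P\le 4l-2$ and that $P$ has integer coefficients, which is routine: $P(x)=\sum_{i=1}^{2l}\delta_i\prod_{k\ne i}(x-a_k)^2$, a sum of products of monic integer polynomials each of degree $4l-2$. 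All the remaining inequalities (the counting $6l-1-2l-(4l-2)\ge1$ and the crude bound $3l+N\le 3lN$) are elementary and I would not belabor them.
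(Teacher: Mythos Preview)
Your argument is correct and complete. The paper itself does not give a proof but simply cites Lemma~5.11 of \cite{BFG20b}; the argument you wrote is the standard one for this kind of statement (clear denominators to get an integer polynomial, count roots, use $|P(\tilde a)|\ge 1$), and is essentially what one finds in that reference.

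One small slip: the inequality $3l+N\le 3lN$ that you invoke fails when $N=1$ (e.g.\ $l=3,\ N=1$ gives $10>9$). This does not matter for the conclusion, since what you actually need is $|\tilde a-a_i|<3l+N\le 6lN$, and $3l+N\le 6lN$ holds for all $l,N\ge 1$ (equivalently $N\ge \tfrac{3l}{6l-1}<1$). With this trivial adjustment your final bound $\prod_{i=1}^{2l}|\tilde a-a_i|^2\le(6lN)^{4l}$ stands as written. Everything else---the partial-fractions argument that $f\equiv0\Rightarrow\bs j\in\mc I_{2l}$, the degree bound $\deg P\le 4l-2$, and the pigeonhole count $6l-1-2l-(4l-2)=1$---is clean.
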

\begin{proof}
It is Lemma 5.11 in \cite{BFG20b}.
\end{proof}



\end{document}